\documentclass[10pt, letterpaper]{amsart}
\date{\today}
\usepackage{amsthm}
\usepackage{amsmath}
\usepackage{amssymb}

\usepackage{mathtools}
\mathtoolsset{showonlyrefs}

\usepackage{listings}

\usepackage{hyperref}
\usepackage{graphicx}
\usepackage{caption}
\usepackage{subcaption}
\usepackage{morefloats}
\usepackage{float}
\usepackage{xcolor}
\usepackage{perpage}
\MakeSorted{figure}
\MakeSorted{table}
\usepackage{color}
\usepackage{multirow}
\usepackage{url}
\usepackage{bbm}
\usepackage{rsfso}
\usepackage{enumerate}
\usepackage{enumitem}
\usepackage{comment}
\usepackage{mathrsfs}
\usepackage{dsfont}
\usepackage{algorithm}
\usepackage{algpseudocode}
\usepackage{placeins}

\usepackage{tikz}
\usepackage{pgfplots}
\pgfplotsset{compat=1.18}

\newif\ifwidedisplays
\widedisplaysfalse

\DeclareMathOperator{\Span}{span}

\DeclareMathOperator{\Tr}{Tr}

\def\bR{\mathbb{R}}

\def\R{{\mathbb{R}}}

\newtheorem{theorem}{Theorem}
\newtheorem{proposition}[theorem]{Proposition}

\newtheorem{lem}[theorem]{Lemma}

\newtheorem{remark}[theorem]{Remark}

\newtheorem{defn}[theorem]{Definition}

\newtheorem{corollary}[theorem]{Corollary}

\newtheorem{assumption}[theorem]{Assumption}

\def\min{{\rm min}}

\usetikzlibrary{positioning}
\definecolor{uclablue}{RGB}{39,116,174}
\definecolor{uclamuted}{RGB}{90,90,90}
\definecolor{signalgreen}{RGB}{50,130,90}
\definecolor{darkgreen}{RGB}{0,100,0}
\definecolor{warnorange}{RGB}{210,120,40}
\definecolor{softred}{RGB}{170,60,60}
\definecolor{darkpurple}{RGB}{125,30,150}
\definecolor{darkorange}{RGB}{204,102,0}

\makeatletter
\newcommand{\assumlabel}[2]{\protected@edef\@currentlabel{#2}\label{#1}}
\makeatother

\begin{document}

\title[A Sharp SNR Threshold for QML Breakpoint Estimation]{A Sharp Signal-to-Noise Threshold for Quasi-Maximum Likelihood Breakpoint Estimation}

\author{Hubeyb Gurdogan}
\author{Georg Menz}

\subjclass[2020]{Primary 62M10; Secondary 62H12, 62H25, 62F12, 62R07}

\keywords{Structural breaks, change-point detection, breakpoint estimation, quasi-maximum likelihood, signal-to-noise ratio, covariance change, high-dimensional factor models, regularized log-determinant objective}

\begin{abstract}
We establish a sharp pathwise signal-to-noise criterion for quasi-maximum-likelihood (QML) estimation of a dominant breakpoint in the second-moment structure of a multivariate time series: the QML estimator is consistent whenever the between-regime contrast exceeds the within-regime fluctuation by an explicit factor, and below this threshold global recovery can fail. Two innovations drive the result. First, the framework is pathwise: no stochastic model is imposed on the data. Second, the log-determinant objective carries an additive ridge regularization: it removes the endpoint boundary layers, so no trimming of the candidate set is required, and tuning the ridge weakens the consistency condition. As an application of the main theorem, we establish consistency of QML breakpoint estimation in pervasive factor models whose dimension diverges with the sample size, for completely general error terms --- not necessarily independent, idiosyncratic, or even random.
\end{abstract}

\maketitle

\setcounter{tocdepth}{1}
\tableofcontents

\section{Introduction}\label{sec:intro}

Many real-world systems exhibit abrupt structural shifts. Financial crises reshape cross-sectional correlations of asset returns; central-bank policy regime changes alter macroeconomic dynamics; climate regimes reorganize correlated environmental variables; experimental conditions in neuroscience modify neural population dynamics. Knowing the moment of the regime change is essential for forecasting, risk management, model adaptation, and interpretation of post-shift behavior. \\

Mathematically, a sudden structural shift means that the underlying data time series $X=(x_t)_{1\le t\le n}\in\R^{p\times n}$ is generated by two different models: one before the breakpoint $\tau=\lfloor \theta n\rfloor$, and one after~$\tau$ (see Figure~\ref{fig:intro_qml_demo}). The central inferential task is then to estimate~$\tau$, or at least the relative break time $\theta\in(0,1)$, and to prove consistency of the estimator as the sample size~$n$ grows.  The quasi-maximum-likelihood (QML) breakpoint estimator $\hat\theta_n$ minimizes the regularized log-likelihood objective
\begin{align}\label{eq:intro_J_n}
J_n(\nu) \;=\; \nu\log\det\bigl(\widehat\Sigma_{0:\nu}+\varepsilon I_q\bigr) + (1-\nu)\log\det\bigl(\widehat\Sigma_{\nu:1}+\varepsilon I_q\bigr)
\end{align}
over candidate fractions $\nu\in(0,1)$. Here, $\widehat\Sigma_{a:b}$ is the empirical second moment of a $q$-dimensional signal series $y_t$ on the macroscopic block $\{\lfloor an\rfloor+1,\dots,\lfloor bn\rfloor\}$. The constant $\varepsilon\ge 0$ is a regularization parameter. Figure~\ref{fig:intro_qml_demo} illustrates the idea on a simple series with a single breakpoint: $J_n(\nu)$ attains its global minimum at the grid point corresponding to the break in this example.\\

\begin{figure}[h]
    \centering
    \includegraphics[width=0.8\linewidth]{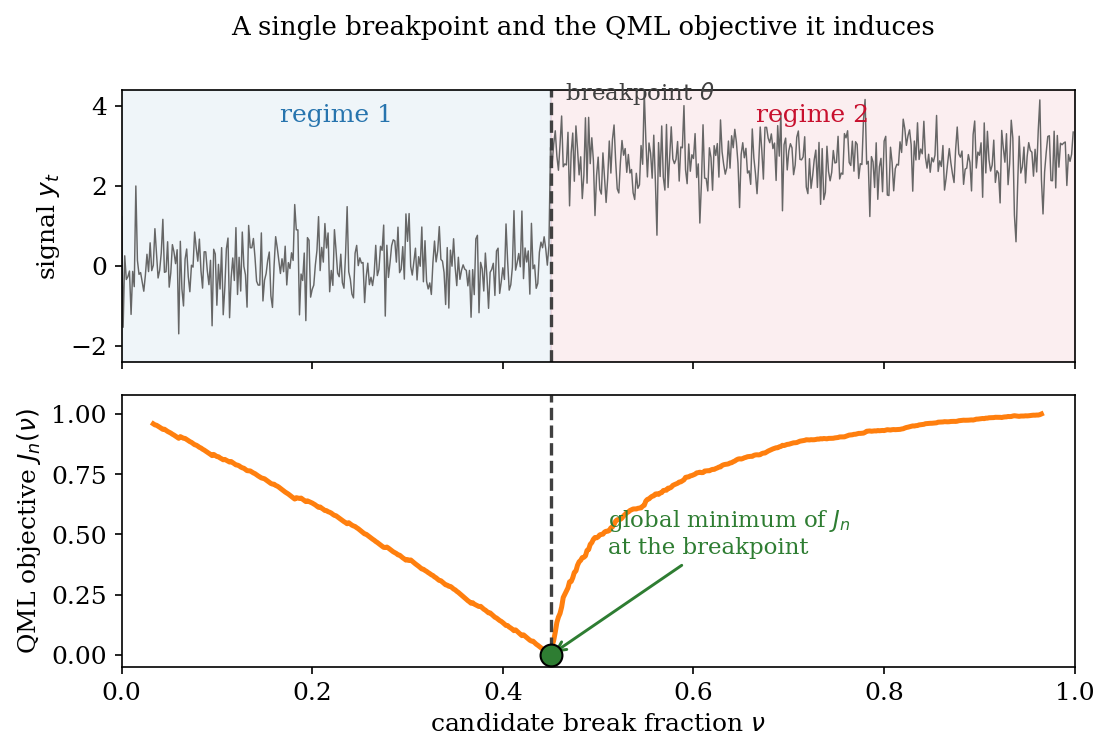}
    \caption{Example of a time series with a single breakpoint (top): regime~1 (blue) gives way to regime~2 (red) at the breakpoint  $\tau=\lfloor \theta n\rfloor$ for $\theta=0.45$. The regularized QML objective $J_n(\nu)$ it induces (bottom) attains its global minimum at the grid point corresponding to the breakpoint in this example.}
    \label{fig:intro_qml_demo}
\end{figure}

Classical change-point theory imposes distributional or independence structure on the data --- from CUSUM/MOSUM~\cite{Page1954,BassevilleNikiforov1993,chu1995mosum} to nonparametric feature-map / kernel-discrepancy methods~\cite{GrettonEtAl2012,MattesonJames2014}: consistency is typically proved under structural assumptions --- independence or weak dependence, moment conditions, often homogeneity --- strong enough to force a (quantitative) law of large numbers on the relevant empirical statistics. In many real-world settings these assumptions are unrealistic. For example, considering financial time series: returns are not independent and are heavy-tailed, and insider trading makes the idiosyncratic component correlated with the history and non-Markov. In general physical systems, the measurement itself can introduce systemic and persistent artifacts that are hard to detect and to remove. Overall, demanding that statistics satisfy a SLLN is too strong. For breakpoint detection it should only matter whether there is enough evidence of a regime change, regardless of the microstructure of the signal, such as whether the noise is independent, idiosyncratic, or even random. Estimating the breakpoint should be possible as soon as the \emph{signal} overcomes the \emph{noise}.

This intuition has been formalized in a substantial line of work on model-free and nonparametric change-point detection. The monographs of Brodsky and Darkhovsky~\cite{BrodskyDarkhovsky1993} and Cs\H{o}rg\H{o} and Horv\'ath~\cite{CsorgoHorvath1997} treat consistency under jump-size or moment conditions rather than full distributional models; in the econometric literature, Bai and Perron~\cite{BaiPerron1998} and Lavielle and Moulines~\cite{LavielleMoulines2000} prove consistency of least-squares multi-break estimators under jump-size hypotheses and weak-dependence assumptions on the noise.
More recently, consistency and optimal detection have been established under explicit lower bounds on the size and spacing of the changes, in works such as Frick--Munk--Sieling~\cite{FrickMunkSieling2014} (multiscale, SMUCE), the kernel-based methods of Garreau--Arlot~\cite{GarreauArlot2018} and Arlot--Celisse--Harchaoui~\cite{ArlotCelisseHarchaoui2019}, the sparse-projection approach of Wang and Samworth~\cite{WangSamworth2018}, and the minimax-optimality theory of Verzelen et al.~\cite{VerzelenFromontLerasleReynaudBouret2023}; see also the survey~\cite{TruongOudreVayatis2020}. Figure~\ref{fig:intro_inhomogeneous_lln} shows a data set of the kind we have in mind: block averages converge locally, but no  regimewise homogeneous law of large numbers holds.\\

\begin{figure}[htp]
    \centering
    \includegraphics[width=0.92\linewidth]{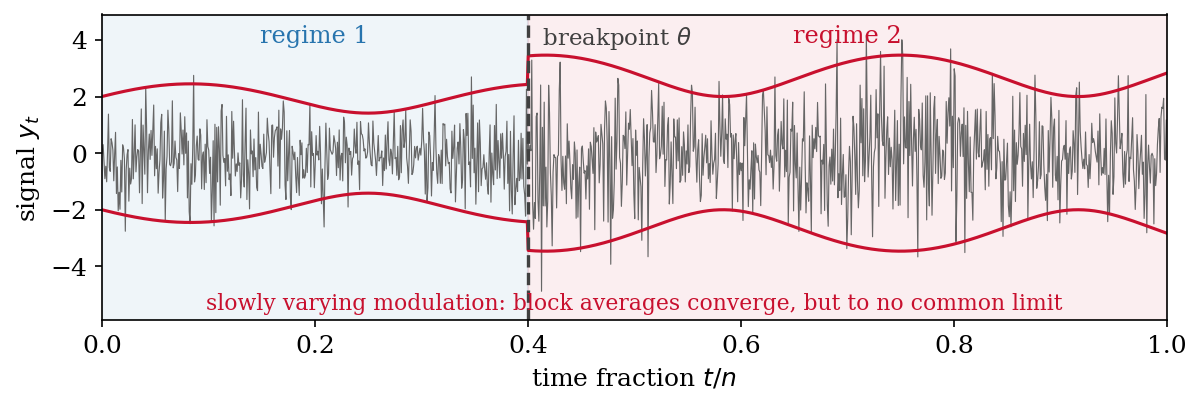}
    \caption{A time series with a single breakpoint at $\theta=0.4$ (base variance $1\to2$; red: $\pm2\sigma$ envelope) under a slowly varying modulation: every macroscopic block average converges, but to limits that vary along the sample --- no regimewise homogeneous law of large numbers holds. The pathwise framework of Section~\ref{sec:abstract_QML} covers such data.}
    \label{fig:intro_inhomogeneous_lln}
\end{figure}

We bring this viewpoint to the QML criterion --- in a model-free setting --- identifying the sharp signal-to-noise threshold separating consistency from inconsistency. We measure the strength of the signal via
\begin{align}
\Delta_{\text{between}} & = \inf_{I_1\subset[0,\theta],\,I_2\subset[\theta,1]}\liminf_{n\to\infty}\bigl\|\widehat\Sigma_{I_1}-\widehat\Sigma_{I_2}\bigr\|_F.
\end{align}
It measures the minimal between-regime contrast. The strength of the noise is measured via
\begin{align}\label{equ:SNR_introduction}
  \Delta_{\text{within}} & = \sup_{\text{same regime}}\limsup_{n\to\infty}\bigl\|\widehat\Sigma_{I_1}-\widehat\Sigma_{I_2}\bigr\|_F.
\end{align}
It measures the maximal within-regime fluctuation. In the main result of this paper, see Theorem~\ref{thm:QML-robust} in Section~\ref{sec:abstract_QML} below, we show that the QML estimator $\hat\theta_n$ is consistent ($\hat\theta_n\to\theta$ as $n\to\infty$) whenever the signal-to-noise ratio (SNR) exceeds
\begin{align}\label{eq:intro_SNR_threshold}
\frac{\Delta_{\text{between}}}{\Delta_{\text{within}}} \;>\; \frac{1}{2\sqrt{\theta(1-\theta)}}\cdot\frac{M+\varepsilon}{m+\varepsilon},
\end{align}
where $M$ and $m$ are asymptotic upper and lower spectral bounds for the empirical block second moments involved; see~\eqref{equ:def_M_m} below for the precise definition. No SLLN, no independence, no idiosyncratic structure, no stochastic model whatsoever is required of the data.
The classical SLLN-based consistency result (see Theorem~\ref{thm:classical_QML_consistency} below) is recovered as the SLLN forces $\Delta_{\text{within}}=0$ corresponding to an infinite SNR. We also show that weakening the SNR by removing the first factor on the right-hand side of~\eqref{eq:intro_SNR_threshold} still forces a local minimum of the QML objective function at the breakpoint. The proof of Theorem~\ref{thm:classical_QML_consistency} rests on a quantitative version of the strict concavity of $\log\det$ --- a Jensen-gap inequality with explicit constants (see Lemma~\ref{lem:jensen_gap_logdet} below).
\\

  An additional structural novelty is the additive ridge regularization $\varepsilon I_q$ inside each $\log\det$. It entails three important consequences.  First, the ridge keeps the objective well defined when a regime second moment is singular, e.g.\ when a factor disappears or emerges at the break (see Remark~\ref{rem:epsilon_regularization} below); without regularization, positive definiteness of both regime second moments is needed, see Assumption~(PD).

Second, the ridge controls the threshold~\eqref{eq:intro_SNR_threshold} itself. The threshold is the product of two factors: the geometric factor $\{2\sqrt{\theta(1-\theta)}\}^{-1}$, which only reflects the position~$\theta$ of the break, and the conditioning factor $(M+\varepsilon)/(m+\varepsilon)$, which increasing~$\varepsilon$ drives to~$1$ (see Remark~\ref{rem:ridge_regularization_and_SNR} below for the explicit choice). The weakest condition obtainable by tuning the ridge is therefore that the signal-to-noise ratio exceeds the geometric factor alone, and Proposition~\ref{prop:sharp-A4-fixed-theta} shows that this residual threshold is sharp: below it, global recovery can fail. Third, and specific to the breakpoint problem, the ridge removes the endpoint boundary layers (see Figure~\ref{fig:boundary_ridge}). For $\varepsilon>0$ the QML criterion can be minimized over the full grid $\mathcal D_n$ rather than over a fixed trimmed subset (see Remark~\ref{rem:untrimmed_minimization} and Theorems~\ref{thm:classical_QML_consistency} and~\ref{thm:QML-robust} below).\\

\begin{figure}[htp]
    \centering
    \includegraphics[width=0.92\linewidth]{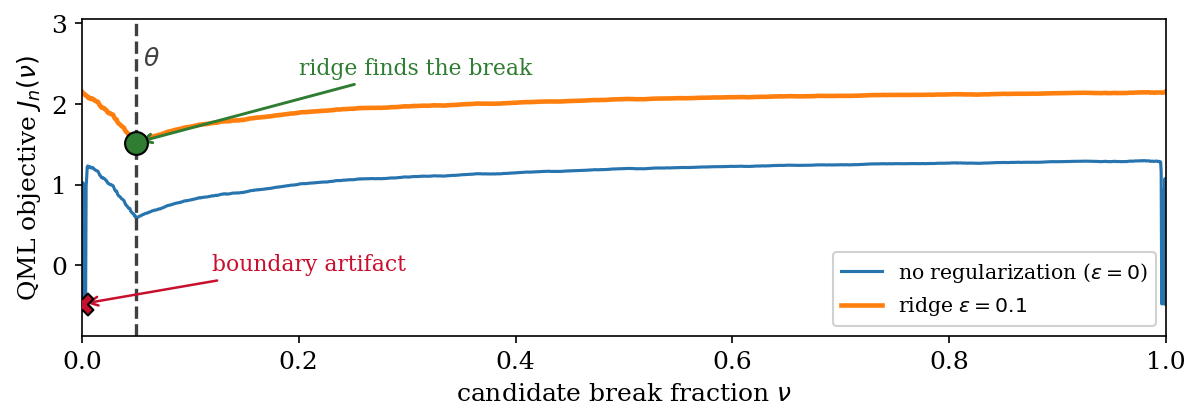}
    \caption{A break inside the endpoint boundary layer ($\theta=0.05$, dashed): without regularization the QML objective dives at the endpoints and its global minimizer is a boundary artifact; with the ridge it attains its minimum at the break.}
    \label{fig:boundary_ridge}
\end{figure}

In modern applications, data are often high-dimensional (large cross-sections, panels, portfolios), where classical low-dimensional change-point methods either break down or require additional structure. The main reason is that generically eigenvalues and eigenvectors are not consistent in the regime~$\frac{p}{n}\to \gamma >0$. For such data it is common that the strongly correlated observed variables have much of their variation driven by a small number of latent factors, which is captured by \emph{factor models}. Without claiming completeness, let us list several examples. In finance, factor models are standard tools for portfolio construction, risk management, and performance attribution (see e.g.~\cite{Ang2014AssetManagement}). In macroeconomics, dynamic factor models are widely used for nowcasting and forecasting from large panels of economic indicators, including real-time monitoring of GDP and business-cycle conditions by central banks (see e.g.~\cite{StockWatson2011DynamicFactorModels} and also~\cite{BaiWang2016LargeFactorModels}). In neuroscience, factor models extract low-dimensional latent structure from large neural population recordings (cf.~e.g.~\cite{CunninghamYu2014DimensionalityReduction}), and in climate science they summarize and forecast large-scale correlated environmental variability (cf.~e.g.~\cite{LiKoopmanLitPetrova2020ElNino}).\\

Breakpoint estimation in high-dimensional factor models was studied by Bai, Han, and Shi~\cite{BaiHanShi2020} using least-squares/PCA-based methods; the closest work to ours is the QML/log-determinant approach of Duan, Bai, and Han~\cite{DuanBaiHan2023}, without ridge regularization, building on the factor-model/PCA theory of Bai~\cite{Bai2003} and Bai--Ng~\cite{BaiNg2002}. Under structural assumptions on the noise --- weak cross-sectional and temporal dependence, moment conditions, often homogeneity --- strong enough to force a quantitative law of large numbers on the empirical second moments, \cite{DuanBaiHan2023} locate the break up to an error of size~$O(1)$.
We therefore apply in Section~\ref{sec:breakpoint_detection_in_factor_models} the abstract framework of Section~\ref{sec:abstract_QML} to a \emph{pervasive} factor model $x_t=\Lambda_\bullet f_t+e_t$ --- the subscript $\bullet\in\{\leq,>\}$ is a placeholder for the regime, with loadings $\Lambda_\leq$ before and $\Lambda_>$ after the break --- in the high-dimensional high-sample-size (HDHSS) regime, which means that the dimension $p=p_n\to\infty$ diverges with the sample size~$n$. Pervasive means that the leading
eigenvalues of~$\Lambda_\bullet \Lambda_\bullet^{\top}$ scale like the underlying dimension~$p$; this is a natural assumption, as factors like the market factor or industrial sectors usually affect a macroscopic proportion of the assets (see Remark~\ref{rem:pervasive_scale} below). We recover and extend the QML breakpoint result of Duan, Bai, and Han~\cite{DuanBaiHan2023} by allowing general error terms~$e_t$ whose blockwise spectral contribution is negligible on the pervasive scale, i.e.~of order~$o(p)$. The error term is not necessarily independent, uncorrelated, idiosyncratic, or even random --- and may carry any non-pervasive, lower-order structure (weak factors, idiosyncratic noise, dependence). In Corollary~\ref{thm:qml_consistency_factor} below, we then show consistency of the QML estimator for this factor model via a combination of Theorem~\ref{thm:QML-robust} and the Davis--Kahan $\sin\theta$ theorem. The factor-model approach carries a second, practical advantage: projecting onto the leading left singular basis of the data --- the leading principal components --- averages out idiosyncratic, name-specific fluctuation and thereby smooths the QML objective. Figure~\ref{fig:gfc_smoothness} displays this on daily U.S.~equity returns spanning 2005--2010: the projected objective is visibly smoother than the SPY-only objective.
Breakpoint estimation in the high-dimensional low-sample-size (HDLSS) regime, where $p\to\infty$ while $n$ remains fixed, remains an open direction; the inconsistency of sample PCA eigenvectors is a central obstacle to extending the present QML theory.\\

\begin{figure}[htp]
    \centering
    \includegraphics[width=0.72\linewidth]{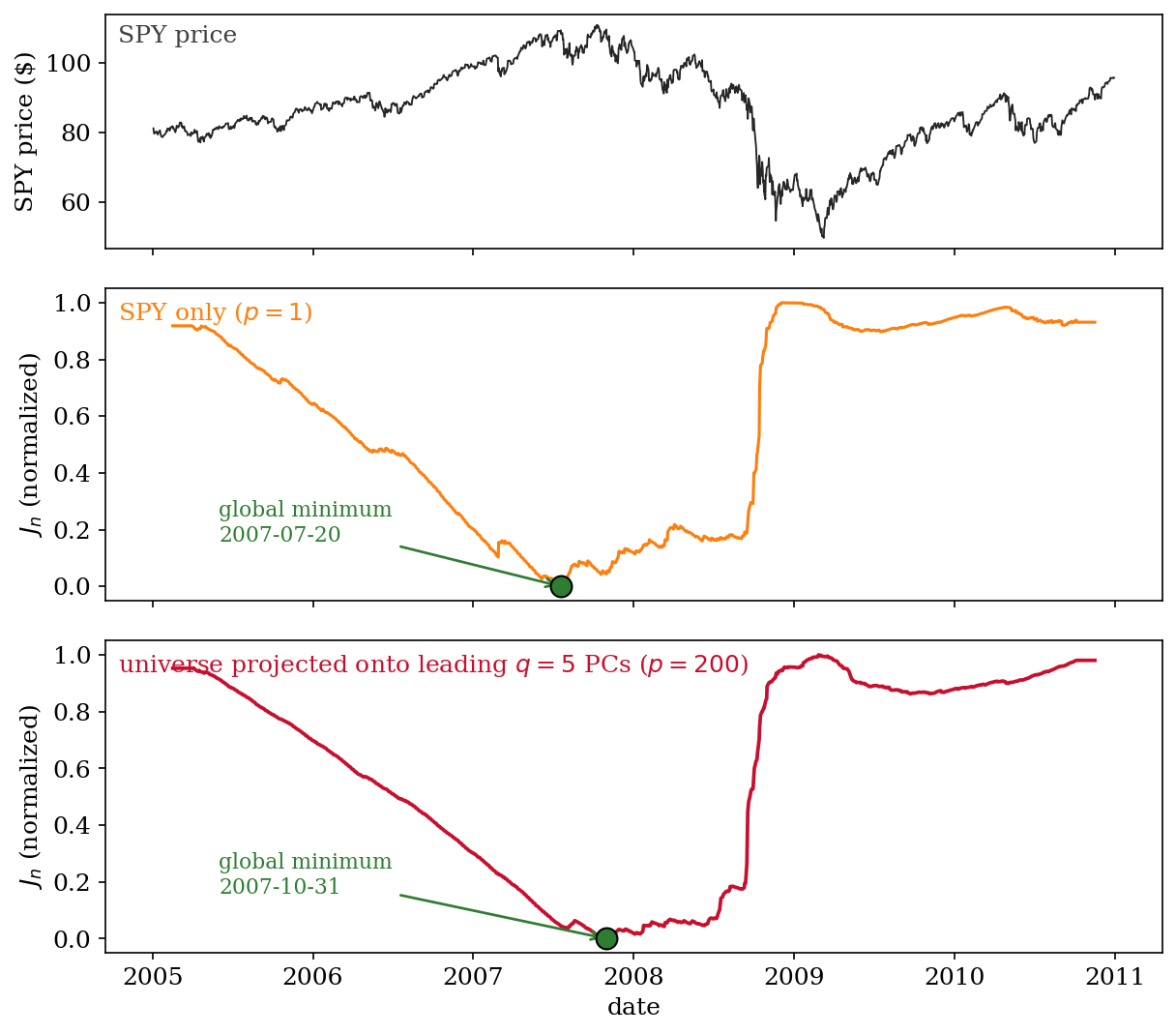}
    \caption{Smoothing by projection on daily U.S.\ equity returns, 2005--2010. Top: SPY price. Middle and bottom: normalized QML objectives $J_n(\nu)$ on the same sample, for SPY alone ($p=1$) and for the large-cap universe ($p=200$ names) projected onto its leading $q=5$ principal components ($\varepsilon=10^{-8}$). The projected objective is visibly smoother.}
    \label{fig:gfc_smoothness}
\end{figure}

The theory of this paper applies to data carrying  one dominant breakpoint: a location at which the between-regime contrast exceeds the within-regime fluctuation (Definition~\ref{def:pathwise_breakpoint}); our theorems require the excess to reach the threshold factor of~\eqref{eq:intro_SNR_threshold}. A dominant breakpoint is unique whenever it exists (Lemma~\ref{lem:unique_breakpoint}), and every minimizer of~$J_n$ converges to it (Theorem~\ref{thm:QML-robust}). In this sense, being a global minimizer of~$J_n$ is a necessary condition for the dominant breakpoint, but not a sufficient one. If no location meets the threshold, the theory makes no statement, and a global minimizer of~$J_n$ may then be produced by a transient anomaly, a corrupted stretch of data, or several breaks of comparable size; and even a genuine breakpoint whose contrast falls short of the threshold can be missed (Proposition~\ref{prop:sharp-A4-fixed-theta}). A minimizer of~$J_n$ should therefore be read as a candidate breakpoint whose signal-to-noise ratio remains to be checked.

The paper is organized as follows. Section~\ref{sec:abstract_QML} develops the abstract, model-free framework and establishes the sharp signal-to-noise consistency threshold for the regularized QML estimator, together with its ridge consequences and the sharpness construction. Section~\ref{sec:breakpoint_detection_in_factor_models} specializes the framework to high-dimensional pervasive factor models. Section~\ref{sec:proofs} contains the proofs, and Appendix~\ref{sec:code} documents the code generating the figures.

\section*{Conventions and notation}

\begin{itemize}
    \item For a finite-dimensional vector~$x$, we denote with~$|x|$ the standard Euclidean norm of~$x$.
    \item For a matrix~$M$, we denote with~$\| M \|$ the operator norm of~$M=(M_{ij})$ with respect to the standard Euclidean norm and with~$\| M \|_{F}$ its Frobenius norm, i.e.
    \begin{align*}
        \|M \|:= \sup_{|x|=1} | M x| \quad \mbox{and} \quad \|M \|_{F} := \sqrt{\sum_{i,j} |M_{ij}|^2}
    \end{align*}
    \item For a symmetric matrix~$M$, $\lambda_{\min}(M)$ and~$\lambda_{\max}(M)$ denote its smallest and largest eigenvalue.
    \item For symmetric matrices we write~$A \succeq 0$ if~$A$ is positive semi-definite (all eigenvalues are non-negative) and~$A \succ 0$ if~$A$ is (strictly) positive definite (all eigenvalues are strictly positive).
    \item A hat denotes an empirical (sample) quantity or an estimator, e.g.~$\widehat \Sigma_{a:b}$,~$\widehat \theta_n$, and~$\widehat U$.
    \item  For a location~$\xi\in(0,1)$ we write~$k_\xi:=\lfloor \xi n\rfloor$ for its discretization, so that the breakpoint is~$\tau=k_\theta$; for a sequence~$(\nu_n)$ we abbreviate~$k_n:=\lfloor \nu_n n\rfloor$.
    \item A subscript or superscript~$\bullet$ is a placeholder for a regime index, $\bullet\in\{\le,>\}$, where~$\le$ refers to the regime before the breakpoint~$\tau$ and~$>$ to the regime after it; e.g.~$\Lambda_\bullet$,~$\Sigma_{F_\bullet}$, and~$\Sigma_\bullet$ denote the corresponding pre-/post-break objects.
    \item We use~$c$ and~$C$ to denote generic positive constants, independent of~$n$ and~$p$, whose value may change from line to line.
    \item For a finite set, e.g.~a macroscopic index block~$I$, we denote with~$|\cdot|$ its cardinality; whether~$|\cdot|$ means the Euclidean norm of a vector or the cardinality of a set will always be clear from the context.
\end{itemize}

\section{Abstract framework for QML-Breakpoint detection}\label{sec:abstract_QML}

The data is given by a $p$-dimensional time series $X=(x_t)_{1\le t\le n}\in\mathbb R^{p\times n}$. We work in a high-sample-size asymptotic regime in which the number of observations~$n$ increases. The analysis of this section is carried out entirely on a fixed, $q$-dimensional signal series (see Definition~\ref{def:signal_separation_space} below) and imposes \emph{no} restriction on the ambient dimension~$p$, which may be held fixed or diverge with~$n$; only the signal dimension~$q$ is fixed. This generality is what lets us instantiate the same framework in the high-dimensional regime~$p=p_n\to\infty$ in Section~\ref{sec:breakpoint_detection_in_factor_models}. We assume that the data suddenly switches regimes at time~$\tau:= \lfloor \theta n \rfloor \in \mathbb{N}$ for some fixed~$\theta \in (0,1)$. The time~$\tau$ is called the breakpoint of the data.  A model-free definition of a \emph{dominant} breakpoint, formulated in terms of the signal series alone, is given in Definition~\ref{def:pathwise_breakpoint} below; the data may contain several breakpoints, but by Lemma~\ref{lem:unique_breakpoint} at most one of them is dominant. The main purpose of this section is to show that a quasi-maximum-likelihood (QML) estimator is able to detect  the dominant breakpoint provided that the data contains a signal that is sufficiently distinct between the regimes.\\

As we work with asymptotics, i.e.~sending the number of observations~$n$ to infinity, let us be more precise about how the data is generated. 
\begin{assumption}\label{ass:generating_data}
    Let~$\theta \in (0, 1)$. We assume that there is a latent two-sided $p$-dimensional sequence $Z= (z_{s})_{s \in \mathbb{Z}}$, fixed and independent of~$n$, which is in the first regime for~$s \leq 0$ and in the second regime for~$s \geq 1$. For a sample of size~$n$, the observed data $X= (x_t)_{1 \leq t \leq n} \in \mathbb R^{p\times n}$ is the length-$n$ window of~$Z$ that straddles the origin at the fraction~$\theta$,
    \begin{align}
        x_t := z_{\,t - \lfloor \theta n \rfloor}, \qquad 1 \leq t \leq n,
    \end{align}
    i.e.~the window~$-\lfloor\theta n\rfloor+1 \leq s \leq \lceil(1-\theta)n\rceil$ re-indexed to start at~$t=1$.
\end{assumption}
In practical terms, Assumption~\ref{ass:generating_data} means that as the sample size~$n$ grows, the observation window of the fixed latent sequence~$Z$ expands on both sides of the break at the origin, while the break itself is held at the fixed sample fraction~$\theta$. This construction has several advantages:
\begin{itemize}
    \item The breakpoint~$\tau_n = \lfloor \theta n\rfloor$ is kept at a fixed fraction~$\theta \in (0,1)$ of the sample.
    \item It guarantees that both regimes have a sample size growing proportional to~$n$.
    \item It provides a clean coupling across different~$n$, simplifying the asymptotic analysis of estimators.
\end{itemize}
This coupling is only a convenient canonical construction: all results below use only the stated blockwise empirical limits and therefore apply unchanged to triangular arrays satisfying the assumptions of the result in question.

 Instead of considering the full $p$-dimensional time series~$(x_t)$, we only consider a $q$-dimensional signal time series~$(y_t)$, carefully chosen to separate the regimes. Choosing the right signal is important as it allows for filtering noise and amplifying the regime separation. The signal time series~$(y_t)$ is the only object on which the QML estimator below acts.

\begin{defn}[Signal time series and signal separation space] \label{def:signal_separation_space}
    The \emph{signal time series}~$(y_t)$ is a $q$-dimensional sequence~$y_t\in\mathbb R^q$, $1\le t\le n$, obtained from the data by a map~$x_t\mapsto y_t$. This map can be nonlinear but the canonical construction is linear. Assuming that the separating signal lives in a $q$-dimensional subspace of~$\mathbb{R}^p$, we fix an orthonormal basis $u_i\in\mathbb R^{p}$, $1\le i\le q$, of this space, and let $U_q=(u_1,\dots,u_q)\in\mathbb R^{p\times q}$ denote the
associated matrix. This subspace~$\Span(U_q)$ is called the \emph{signal separation space}, and the signal time series records the coordinates of the orthogonal projection of~$x_t$ onto it in the basis~$U_q$, i.e.
    \begin{align}
   y_t := U_q^{\top} x_t \in \mathbb{R}^q, \qquad 1\le t\le n.
\end{align}
Working with the coordinate vector~$y_t\in\mathbb R^q$ rather than the projection in~$\mathbb R^p$ is what reduces the ambient dimension to the fixed number~$q$. The signal-separation space might depend on the data~$X$ and therefore also on~$n$ or~$p$.
\end{defn}

The choice of the signal time series --- equivalently, of the map~$x_t\mapsto y_t$ that produces it --- is crucial for the success of breakpoint detection, and a priori knowledge is valuable. Many different constructions of the signal have been proposed, ranging from contrast-based methods like CUSUM (e.g.~\cite{Page1954,BassevilleNikiforov1993}) and MOSUM (e.g.~\cite{chu1995mosum,CsorgoHorvath1997}) to global dimension reduction via PCA/SVD factors (e.g.~\cite{BaiHanShi2020,DuanBaiHan2023}). Other popular alternatives include data-driven projection directions \cite{WangSamworth2018} and nonparametric feature-map discrepancies (kernel/energy-type signals) \cite{GrettonEtAl2012,MattesonJames2014}. The framework also accommodates feature engineering by absorbing it into the construction of~$x_t$. The raw inputs can be augmented with engineered features (e.g.~quadratic interactions, lagged variables, kernel evaluations) designed to amplify the between-regime contrast. We assume that the data time series~$x_t$ already contains those augmented features. We want to point out one trivial case: choosing~$y_t= x_t$, i.e.~the identity map, is a valid choice for defining the signal time series.   

\begin{remark}[PCA/SVD signals and their limitations]\label{rem:data_driven_signal_space}
     If a priori knowledge is not available, one data-driven choice of the signal separation space is as follows (see for example~\cite{Bai2003,BaiHanShi2020,DuanBaiHan2023}): The signal separation space is spanned by the left leading singular basis vectors obtained by the singular value decomposition of the data matrix~$X$. Equivalently this corresponds to choosing the leading eigenspace of the empirical covariance matrix~$\frac{1}{n} X X^{\top}$. We use this approach when studying breakpoint detection in factor models (see Section~\ref{sec:breakpoint_detection_in_factor_models} below for more details).    
     A possible problem is that PCA/SVD signal spaces can fail to separate regimes when the directions that change
across regimes are \emph{not} aligned with the leading principal components (see~\cite{AueRiceSonmez2018}).
 Concretely, in the retained signal coordinates $y_t=U_q^{\top}x_t$ the detector observes only the compressed regime moment $U_q^{\top}\Sigma\,U_q$ of each regime second moment~$\Sigma$. Writing $D=\Sigma_1-\Sigma_2$ for the difference between the two regimes, the break is visible in the signal time series if and only if $U_q^{\top} D\, U_q\neq 0$. Choosing the leading-$q$ eigenspace as signal separation space therefore hides the break exactly when~$D$ is supported on directions orthogonal to that eigenspace --- typically trailing, low-variance directions that PCA discards --- whereas a change in the variance along a \emph{retained} leading direction leaves $U_q^{\top} D\, U_q\neq 0$ and remains visible.

\end{remark}

As we send~$n \to \infty$, let us introduce the notion of macroscopic intervals.
\begin{defn}[Macroscopic intervals]
For~$0 \leq a < b \leq 1$ let~$I(a,b)$ denote the macroscopic interval
\begin{align}
    I(a,b) = \{\lfloor a n \rfloor+1 , \ldots , \lfloor b n \rfloor \}.
\end{align}
    We define the family $\mathcal{I}$ of all macroscopic intervals by
\begin{align}
    \mathcal{I}
    := \Big\{ I(a,b)
          \;\Big|\; 0 \leq a < b \leq 1 \Big\}.
\end{align}
Let $\theta\in(0,1)$ denote the (unknown) breakpoint. We define the sets of macroscopic
intervals that lie entirely in the left- and right-hand regimes by
\[
    \mathcal{I}_{\leq}
    := \bigl\{ I\in\mathcal{I} : I \subset \{1,\dots,\lfloor \theta n \rfloor\} \bigr\},
    \qquad
    \mathcal{I}_{>}
    := \bigl\{ I\in\mathcal{I} : I \subset \{\lfloor \theta n \rfloor+1,\dots,n\} \bigr\}.
\]
\end{defn}

\begin{defn}\label{def_singal_and_noise}

For a nonempty macroscopic interval $ I= \{\lfloor a n\rfloor +1, \ldots, \lfloor b n \rfloor \}\in\mathcal{I}$ we define the empirical signal second moment matrix as
\begin{align}
   \widehat \Sigma_{a : b}
   := \frac{1}{|I|}\sum_{t \in I} y_t y_t^{\top} \in\mathbb R^{q\times q}.
\end{align}    

We distinguish two families of macroscopic blocks.  The break-anchored
blocks are
\begin{align}
\mathcal I_{\theta}^{\rm br}
:={}&
\bigl\{I(s,\theta):0\leq s<\theta\bigr\}
\cup
\bigl\{I(\theta,s):\theta<s\leq1\bigr\},
\end{align}
and the regime-cut blocks are all pieces obtained by cutting either regime once,
\begin{align}
\mathcal I_{\theta}^{\rm cut}
:={}&
\mathcal I_{\theta}^{\rm br}
\cup
\bigl\{I(0,s):0<s<\theta\bigr\}
\cup
\bigl\{I(s,1):\theta<s<1\bigr\}.
\end{align}
We define the spectral ceiling and floor by
\begin{align}\label{equ:def_M_m}
M
&:=
\sup_{I(a,b)\in\mathcal I_{\theta}^{\rm br}}
\limsup_{n\to\infty}\bigl\|\widehat\Sigma_{a:b}\bigr\|,
\\
m
&:=
\inf_{I(a,b)\in\mathcal I_{\theta}^{\rm cut}}
\liminf_{n\to\infty}
\lambda_{\min}\!\bigl(\widehat\Sigma_{a:b}\bigr).
\end{align}

We additionally define 
\begin{align}
  & \Delta_{\mathrm{within}}
  :=  \sup_{0\leq a_i <b_i \leq \theta, i =1,2 \text{ or } \theta \leq a_i < b_i \leq 1 , i =1,2}  \limsup_{n \to \infty}
    \| \widehat \Sigma_{a_1: b_1} - \widehat \Sigma_{a_2: b_2} \|_{F} , \label{eq:Delta-within} \\
  & \Delta_{\mathrm{between}}
  :=  \inf_{0\leq a_1 <b_1 \leq \theta, \  \theta \leq a_2 < b_2 \leq 1}  \liminf_{n \to \infty}
    \| \widehat \Sigma_{a_1: b_1} - \widehat \Sigma_{a_2 : b_2}  \|_{F} . \label{eq:Delta-between}
\end{align}
For later use, if $\xi\in(0,1)$ is treated as a candidate split, we denote by
$\Delta_{\mathrm{within}}(\xi)$ and $\Delta_{\mathrm{between}}(\xi)$
the quantities defined in~\eqref{eq:Delta-within} and~\eqref{eq:Delta-between},
respectively, with the regime boundary $\theta$ replaced by $\xi$. Thus,
$\Delta_{\mathrm{within}}=\Delta_{\mathrm{within}}(\theta)$ and
$\Delta_{\mathrm{between}}=\Delta_{\mathrm{between}}(\theta)$.

\end{defn}

\begin{remark}
    We want to point out that we do not make structural assumptions on how the data matrix is generated (e.g.~via a factor model), or require that the data follows a strong law of large numbers. 
\end{remark}

\begin{remark}\label{rem:interpretaion_signal_noise}
The empirical signal second moment matrices will be the pivotal tool to detect the breakpoint. The quantity~$\Delta_{\text{within}}$ measures the worst-case fluctuation of the empirical signal second moment matrix within a regime; it therefore measures the noise. The quantity~$\Delta_{\text{between}}$ measures the minimal difference between signal second moment matrices of different regimes. It therefore measures the strength of the regime separation signal.
\end{remark}

\begin{defn}[Dominant breakpoint]\label{def:pathwise_breakpoint}
A candidate split~$\xi\in(0,1)$ is called a \emph{dominant breakpoint} of the signal time series if
\begin{align}\label{eq:def_breakpoint}
\Delta_{\mathrm{between}}(\xi)>\Delta_{\mathrm{within}}(\xi),
\end{align}
i.e.~if blocks on opposite sides of~$\xi$ differ asymptotically more than any two blocks on the same side of~$\xi$. 
\end{defn}
The data may contain several breakpoints, but the dominant breakpoint is unique. 
\begin{lem}[At most one dominant breakpoint]\label{lem:unique_breakpoint}
There is at most one dominant breakpoint, i.e.~at most one~$\xi\in(0,1)$ satisfying~\eqref{eq:def_breakpoint}. Therefore, we reserve the symbol~$\theta \in (0,1)$ to denote the dominant breakpoint in the signal time series. 
\end{lem}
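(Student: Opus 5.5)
Suppose for contradiction that two distinct splits $\xi<\eta$ in $(0,1)$ are both dominant, i.e.~$\Delta_{\mathrm{between}}(\xi)>\Delta_{\mathrm{within}}(\xi)$ and $\Delta_{\mathrm{between}}(\eta)>\Delta_{\mathrm{within}}(\eta)$. The plan is to find two pairs of blocks whose differences play opposite roles. One pair is on opposite sides of $\xi$ but on the same side of $\eta$. The other pair is on opposite sides of $\eta$ but on the same side of $\xi$. Comparing the two inequalities through these pairs will then give a contradiction.

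Fix the three macroscopic intervals
\begin{align}
A=I(0,\xi),\qquad B=I(\xi,\eta),\qquad C=I(\eta,1),
\end{align}
each nonempty since $0<\xi<\eta<1$. Write $d_{AB}:=\limsup_n\|\widehat\Sigma_{0:\xi}-\widehat\Sigma_{\xi:\eta}\|_F$, and denote the corresponding $\liminf$ by $\underline d_{AB}$. Define $d_{BC}$ and $\underline d_{BC}$ in the same way.

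\textbf{Constraints from $\xi$.} The blocks $A\subset[0,\xi]$ and $B\subset[\xi,1]$ lie on opposite sides of $\xi$. The definition of $\Delta_{\mathrm{between}}(\xi)$ therefore gives $\underline d_{AB}\ge\Delta_{\mathrm{between}}(\xi)$. The blocks $B$ and $C$ both lie in $[\xi,1]$, so $d_{BC}\le\Delta_{\mathrm{within}}(\xi)$.

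\textbf{Constraints from $\eta$.} Symmetrically, $A$ and $B$ both lie in $[0,\eta]$, so $d_{AB}\le \Delta_{\mathrm{within}}(\eta)$. The blocks $B$ and $C$ lie on opposite sides of $\eta$, so $\underline d_{BC}\ge\Delta_{\mathrm{between}}(\eta)$.

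\textbf{Combining.} Chaining these bounds with dominance at each split gives
\begin{align}
\Delta_{\mathrm{within}}(\eta)\ge d_{AB}\ge \underline d_{AB}\ge\Delta_{\mathrm{between}}(\xi)>\Delta_{\mathrm{within}}(\xi)\ge d_{BC}\ge\underline d_{BC}\ge\Delta_{\mathrm{between}}(\eta)>\Delta_{\mathrm{within}}(\eta).
\end{align}
This says $\Delta_{\mathrm{within}}(\eta)>\Delta_{\mathrm{within}}(\eta)$, which is impossible. Hence at most one dominant split exists.

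The only step needing care is admissibility of the blocks in the sup and inf. The definitions use closed-side constraints such as $0\le a_1<b_1\le\xi$ and $\xi\le a_2<b_2\le 1$, so all of $A,B,C$ qualify. We also need all quantities to be finite for the strict inequalities to be meaningful. If some $\Delta_{\mathrm{within}}$ equals $+\infty$, dominance at that split fails, so finiteness of the relevant values follows from the assumed dominance. Infinite values elsewhere in the chain only strengthen the contradiction.
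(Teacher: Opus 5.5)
Your proof is correct and is essentially the paper's argument. It uses the same three blocks $I(0,\xi)$, $I(\xi,\eta)$, $I(\eta,1)$, the same two cross-comparisons $\Delta_{\mathrm{between}}(\xi)\le\Delta_{\mathrm{within}}(\eta)$ and $\Delta_{\mathrm{between}}(\eta)\le\Delta_{\mathrm{within}}(\xi)$, and the same cyclic chain ending in a strict self-inequality; your finiteness remark is harmless but unnecessary, since the chain already holds in the totally ordered extended reals.
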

\begin{proof}
We argue by contradiction and let $0<\nu_1<\nu_2<1$ be two dominant breakpoints. The macroscopic blocks $I(0,\nu_1)$ and $I(\nu_1,\nu_2)$ lie on opposite sides of~$\nu_1$ but on the same side of~$\nu_2$, so the definitions~\eqref{eq:Delta-within} and~\eqref{eq:Delta-between} give
\[
\Delta_{\mathrm{between}}(\nu_1)
\leq\liminf_{n\to\infty}\big\|\widehat\Sigma_{0:\nu_1}-\widehat\Sigma_{\nu_1:\nu_2}\big\|_F
\leq\limsup_{n\to\infty}\big\|\widehat\Sigma_{0:\nu_1}-\widehat\Sigma_{\nu_1:\nu_2}\big\|_F
\leq\Delta_{\mathrm{within}}(\nu_2).
\]
Exchanging the roles of the two candidates and using the blocks $I(\nu_1,\nu_2)$ and $I(\nu_2,1)$ gives $\Delta_{\mathrm{between}}(\nu_2)\leq\Delta_{\mathrm{within}}(\nu_1)$. If both~$\nu_1$ and~$\nu_2$ satisfied~\eqref{eq:def_breakpoint}, chaining these inequalities would yield
\[
\Delta_{\mathrm{within}}(\nu_1)
<\Delta_{\mathrm{between}}(\nu_1)
\leq\Delta_{\mathrm{within}}(\nu_2)
<\Delta_{\mathrm{between}}(\nu_2)
\leq\Delta_{\mathrm{within}}(\nu_1),
\]
a contradiction.
\end{proof}
\begin{remark}
  A dominant breakpoint need not exist: for scalar data taking the second-moment levels $a\to b\to a$ on three equal segments, every~$\xi\in(0,1)$ has level-$a$ blocks on both sides, so $\Delta_{\mathrm{between}}(\xi)=0$ for all~$\xi$, although the data has two genuine breaks, neither of which is dominant.
\end{remark}

In this section we work with the following assumptions:

\begin{assumption}
\begin{itemize}
    \item[(A1)]\assumlabel{ass:bounded_second_moment}{(A1)} {\bf Bounded operator norm.} We assume that $M<\infty$.
    \item[(A2)]\assumlabel{ass:SLLN_in_each_regime}{(A2)} {\bf SLLN in each regime.} We assume that there are positive semidefinite matrices~$\Sigma_{\leq} \neq \Sigma_{>} \in \mathbb{R}^{q \times q}$ such that for all~$0 \leq a_1 < b_1 \leq \theta$ and~$\theta \leq a_2 < b_2 \leq 1$ it holds
    \[
        \lim_{n \to \infty} \widehat \Sigma_{a_1: b_1} = \Sigma_{\leq} \quad \mbox{and} \quad         \lim_{n \to \infty} \widehat \Sigma_{a_2: b_2} = \Sigma_{>}.
    \]
    \item[(A3)]\assumlabel{ass:signal_weak_domination}{(A3)} {\bf Weak signal-separation condition.}  For~$\varepsilon \geq 0$
    \[
        \Delta_{\text{between}}  > \frac{M+\varepsilon}{m+\varepsilon} \Delta_{\text{within}}.
    \]
    \item[(A4)]\assumlabel{ass:aposeterior_signal_separation}{(A4)} {\bf A posteriori strong signal-separation condition.} For~$\varepsilon \geq 0$
\[
 \Delta_{\text{between}}  >   
  \frac{1}{2 \sqrt{\theta (1- \theta)}}\frac{ M+\varepsilon}{m+\varepsilon} \Delta_{\text{within}}.
\]
    \item[(A5)]\assumlabel{ass:apriori_signal_separation}{(A5)} {\bf A priori strong signal-separation condition.} For $0 \leq \delta \leq \theta \leq 1- \delta$ and~$\varepsilon \geq 0$
\[
\Delta_{\text{between}}  >   
  \frac{1}{2 \sqrt{\delta (1- \delta)}}\frac{ M+\varepsilon}{m+\varepsilon} \Delta_{\text{within}}.
\]
    \item[(PD)]\assumlabel{ass:positive_definiteness}{(PD)} {\bf Positive definiteness.} We assume that $m>0$.
\end{itemize}
\end{assumption}

\begin{remark}
   The second moment bound of Assumption~\ref{ass:bounded_second_moment} is standard and does not rule out the analysis of heavy-tailed data~$(x_t)$ as it applies to the signal time series~$(y_t)$. A nonlinear mapping $x_t \mapsto y_t$ can ensure Assumption~\ref{ass:bounded_second_moment}, for example by enforcing a uniform bound on~$\sup_{t}|y_t|$ via truncation.
\end{remark}

\begin{remark}[Second moments versus covariances]\label{rem:second_moment_vs_covariance}
Throughout the manuscript we work with the empirical \emph{second moment} matrices
\[
\widehat\Sigma_{a:b} \;=\; \tfrac{1}{|I|}\sum_{t\in I}y_t y_t^{\top},
\]
i.e.~uncentered averages of $y_t y_t^{\top}$. The framework extends naturally to the centered \emph{covariance} matrices
\[
\widehat\Sigma_{a:b}^{\rm cov} \;:=\; \widehat\Sigma_{a:b} \;-\; \bar y_{a:b}\,\bar y_{a:b}^{\top},
\qquad
\bar y_{a:b} \;:=\; \tfrac{1}{|I|}\sum_{t\in I}y_t,
\]
which differ from the second moments by a rank-one correction. Using the same argument, i.e.~quantifying the Jensen gap, naturally gives the correct definition of signal~$\Delta_{\rm between}$ and noise~$\Delta_{\rm within}$, leading to variants of the main results Theorem~\ref{thm:classical_QML_consistency}, Theorem~\ref{thm:QML-robust}, and Corollary~\ref{thm:qml_consistency_factor}. We chose to work with the second moments throughout because they have clean additivity on macroscopic intervals (cf.~\eqref{equ:decomp_empirical_second_moments} in the proof of Theorem~\ref{thm:QML-robust}), which simplifies the calculations.
\end{remark}

\begin{remark}\label{rem:assumptions_as_signal_to_noise_ratios}
Building upon Remark~\ref{rem:interpretaion_signal_noise}, the ratio~$\frac{\Delta_{\text{between}}}{\Delta_{\text{within}}}$ can be interpreted as the signal-to-noise ratio, and therefore the Assumptions (A2)--(A5) quantify different signal-to-noise ratios. We observe that Assumption (A2) implies 
\begin{align}
\Delta_{\text{between}}< \infty \quad \mbox{and} \quad \Delta_{\text{within}} =0,
\end{align}
having the interpretation that the signal-to-noise ratio is infinite. We immediately see that (A2) implies~(A3),~(A4), and (A5) for all choices of~$\delta>0$ and~$\varepsilon>0$. Moreover, under~(A2) every block in $\mathcal I_{\theta}^{\rm br}$ and $\mathcal I_{\theta}^{\rm cut}$ converges to the corresponding regime matrix, and therefore
\[
M=\max\{\|\Sigma_{\leq}\|,\|\Sigma_{>}\|\},
\qquad
m=\min\{\lambda_{\min}(\Sigma_{\leq}),\lambda_{\min}(\Sigma_{>})\}.
\]
In particular, (A2) implies~(A1). Additionally, we see that if~$\theta \in [\delta, 1 - \delta]$ then~$\delta(1-\delta) \leq \theta(1-\theta)$, and hence the condition~(A5) implies~(A4). Finally, since~$2\sqrt{\theta(1-\theta)} \leq 1$, the prefactor in~(A4) is at least one, so~(A4) implies~(A3). Altogether we obtain the implication chain
\[
    \textup{(A5)}\implies \textup{(A4)}\implies \textup{(A3)},
\]
where the first implication holds for~$\theta \in [\delta, 1-\delta]$.
\end{remark}

\begin{defn}[QML objective function and QML breakpoint estimator~$\hat \theta_n$]\label{def:qml_objective_function}
For fixed~$\varepsilon \geq 0$ and every~$\nu\in(0,1)$ such that both empirical blocks are nonempty, equivalently
$1\leq\lfloor \nu n\rfloor\leq n-1$, we define the QML objective by
\begin{align}
  J_{n}(\nu)
  := \nu \log\det\big(  \widehat \Sigma_{0: \nu } + \varepsilon I_q  \big)
   + (1- \nu) \log\det\big( \widehat \Sigma_{\nu : 1} + \varepsilon I_q\big).
\end{align}

Here the empirical second moments are understood according to Definition~\ref{def_singal_and_noise}, in particular with the floor convention in their block endpoints. Thus the formula also defines~$J_n$ at off-grid fractions whenever both blocks are nonempty. For breakpoint estimation, the data are discrete and it suffices to evaluate~$J_n$ at the grids
\[
    \mathcal D_n:=\left\{\frac{k}{n}:1\leq k\leq n-1\right\},
    \qquad
    \mathcal D_n(\delta):=\mathcal D_n\cap[\delta,1-\delta],
    \quad 0<\delta<1/2 .
\]
The untrimmed and trimmed QML estimators of the breakpoint~$\theta$ are obtained by minimizing~$J_n$ and formally defined via
\[
    \hat \theta_n\in\arg\min_{\nu\in\mathcal D_n}J_n(\nu),
    \qquad
    \hat \theta_{n,\delta}\in\arg\min_{\nu\in\mathcal D_n(\delta)}J_n(\nu).
\]

\end{defn}

There are three important reasons to include the~$\varepsilon$-regularization~$+ \varepsilon I_q$ in the definition of the QML-objective function~$J_{n}$:
\begin{remark}\label{rem:epsilon_regularization}
    The~$\varepsilon$-regularization~$+ \varepsilon I_q$ in the definition of the QML-objective function~$J_{n}$ allows eigenvalues of the signal second moment matrix to take on the value~$0$. For example, this allows one to model the situation where a signal completely vanishes in the other regime. The main financial application is the rank collapse in a crisis regime, where at the breakpoint all assets are strongly correlated and the cross-sectional correlations are negligible in comparison.
\end{remark}
\begin{remark}\label{rem:ridge_regularization_and_SNR}
 Adding the~$\varepsilon$-regularization~$+ \varepsilon I_q$ in the definition of the QML-objective function~$J_{n}$ gives one additional degree of freedom to choose from. By choosing~$\varepsilon$ appropriately, one can weaken the signal-separation conditions. In particular (A3) can be weakened to
 \begin{itemize}
     \item[(A3*)] We assume that
     \begin{align}
         \Delta_{\text{between}}  > \Delta_{\text{within}} .
     \end{align}
 \end{itemize}
Then (A3*) implies~(A3) for
\begin{align}
    \varepsilon\geq \max \left\{0, \frac{ M \Delta_{\text{within}}- m\Delta_{\text{between}}}{\Delta_{\text{between}}-\Delta_{\text{within}}} \right\}.
\end{align}
Thus~\textup{(A3*)} is the best condition obtained by
optimizing the sufficient local condition~\textup{(A3)} over the ridge
parameter.

 The assumption (A4) can be weakened to:
\begin{itemize}
    \item[(A4*)]\label{ass:weak_signal_to_noise_optimized_epsilon} We assume that   
\begin{align}\tag{(A4*)}\label{equ:strong_a_posterior_signal_separation_condition_no_M_m}
  \Delta_{\text{between}}  >   
  \frac{1}{2 \sqrt{\theta (1- \theta)}} \Delta_{\text{within}}.
\end{align}
\end{itemize}
Then (A4*) implies~(A4) for
\begin{align}
    \varepsilon\geq \max \left\{0, \frac{ M \Delta_{\text{within}}- 2 \sqrt{\theta (1- \theta)}\, m\Delta_{\text{between}}}{2 \sqrt{\theta (1- \theta) }\Delta_{\text{between}}-\Delta_{\text{within}}} \right\}.
\end{align}
In contrast to the preceding local condition, the
ridge-optimized global threshold in~\textup{(A4*)} is shown to be sharp in
Proposition~\ref{prop:sharp-A4-fixed-theta} and
Remark~\ref{rem:sharp-A4-fixed-theta}.
We want to point out that in the special case~$\theta = 0.5$ the right-hand side of Assumption~\ref{equ:strong_a_posterior_signal_separation_condition_no_M_m} equals~1. The assumption~(A5) can be weakened similarly.
\end{remark}

\begin{remark}[The ridge removes the endpoint boundary layers]\label{rem:untrimmed_minimization}
The third reason is specific to the breakpoint problem. For~$\varepsilon=0$ the minimization of~$J_n$ has to be restricted to a trimmed grid~$\mathcal D_n(\delta)$: a block second moment built from fewer than~$q$ samples is rank-deficient, so~$J_n=-\infty$ on the short endpoint windows, and, more generally, the empirical block second moments near the endpoints need not remain uniformly positive definite, so that no uniform control of~$J_n$ is available there. For~$\varepsilon>0$ the ridge bounds the spectrum of the regularized block second moments from below by~$\varepsilon$, uniformly over all windows, which removes these endpoint boundary layers (see Lemma~\ref{lem:boundary_energy} and Lemma~\ref{lem:boundary_logdet} below). As a consequence, the QML criterion can be minimized over the full grid~$\mathcal D_n$ (see Theorem~\ref{thm:classical_QML_consistency}, Theorem~\ref{thm:QML-robust}, and Corollary~\ref{thm:qml_consistency_factor} below).
\end{remark}

\begin{remark}[Interpretation of QML objective function]\label{rem:interpretation_qml_objective_function}
The name QML estimator comes from the fact that~$J_n$ is the profile log-likelihood of the following Gaussian block model of a regime change (for simplicity, $\varepsilon=0$). For a candidate breakpoint~$1\leq t<n$, assume the data~$Y:=(y(s))_{1\leq s\leq n}$, written in signal coordinates, is generated with unknown covariance matrices $\Sigma_{\leq t}$, $\Sigma_{>t}$ as
\[
\{y(s)\}_{ 1\leq s \leq t} \stackrel{\mathrm{i.i.d.}}{\sim}\mathcal N(0,\Sigma_{\leq t}),
\quad \mbox{and} \quad
\{y(s)\}_{t < s \leq n}\stackrel{\mathrm{i.i.d.}}{\sim}\mathcal N(0,\Sigma_{>t}).
\]
The Gaussian log-likelihood $\ell(Y\mid t,\Sigma_{\leq t},\Sigma_{>t})$ is maximized uniquely at $\Sigma_{\leq t}=\widehat\Sigma_{0:\frac tn}$ and $\Sigma_{>t}=\widehat\Sigma_{\frac tn:1}$, and a standard trace computation (the quadratic terms evaluate to $tq$ and $(n-t)q$ at the maximizer) gives
\begin{align}
    \max_{\Sigma_{\leq t} , \Sigma_{>t}} \ell(Y \mid t, \Sigma_{\leq t} , \Sigma_{>t} )
   =  - \frac{n}{2} J_{n}\!\left(\tfrac{t}{n}\right) - \frac{nq}{2} \left(1 + \log(2 \pi)\right).
\end{align}
Minimizing the QML objective over~$t$ is therefore maximizing the profile likelihood over the candidate breakpoint.
\end{remark}

First we consider the classical case, which means that the empirical second moment matrices satisfy the SLLN of Assumption~\ref{ass:SLLN_in_each_regime}.
\begin{defn}[Population targets for left/right segments]\label{def:population_targets}
For~$\nu \in (0,1)$ we define the population targets for the left and the right segment by
\ifwidedisplays
\[
\Sigma_{0: \nu}
:=
\begin{cases}
\Sigma_{\le}, & \nu \le \theta,\\[4pt]
\frac{\theta}{\nu}\,\Sigma_{\le}
+\frac{\nu-\theta}{\nu}\,\Sigma_{>}, & \nu > \theta,
\end{cases}
\qquad
\Sigma_{\nu:1}
:=
\begin{cases}
\frac{\theta-\nu}{1-\nu}\,\Sigma_{\le}
+\frac{1-\theta}{1-\nu}\,\Sigma_{>}, & \nu < \theta,\\[4pt]
\Sigma_{>}, & \nu \ge \theta.
\end{cases}
\]
\else
\[
\Sigma_{0: \nu}
:=
\begin{cases}
\Sigma_{\le}, & \nu \le \theta,\\[6pt]
\displaystyle
\frac{\theta}{\nu}\,\Sigma_{\le}
+\frac{\nu-\theta}{\nu}\,\Sigma_{>}, & \nu > \theta,
\end{cases}
\quad
\Sigma_{\nu:1}
:=
\begin{cases}
\displaystyle
\frac{\theta-\nu}{1-\nu}\,\Sigma_{\le}
+\frac{1-\theta}{1-\nu}\,\Sigma_{>}, & \nu < \theta,\\[6pt]
\Sigma_{>}, & \nu \ge \theta.
\end{cases}
\]
\fi
     For~$\varepsilon \geq 0$, we define the limiting QML-objective function ~$J_\infty (\nu)$ for~$\nu \in (0, 1)$ via
 \begin{align}
  J_{\infty}(\nu)
  := \nu \log\det\left( \Sigma_{0: \nu} + \varepsilon I_q \right)
   + (1- \nu) \log\det\big(  \Sigma_{\nu :1}+ \varepsilon I_q\big).
 \end{align}
We define its values at~$\nu=0$ and~$\nu=1$ by continuous extension.
\end{defn}

\begin{theorem}\label{thm:classical_QML_consistency}
We assume~\ref{ass:SLLN_in_each_regime} (which also immediately implies~\ref{ass:bounded_second_moment}). If $\varepsilon=0$, we additionally assume~\ref{ass:positive_definiteness}. Then:
 \begin{enumerate}[label=(\roman*)]
     \item The limiting QML objective function~$J_\infty$ is strictly concave on $(0, \theta)$ and on~$(\theta, 1)$ and has a cusp at its unique minimizer~$\theta$. More precisely, for all~$\nu \in [0,1]$
     \begin{align}\label{equ:cusp_separation_J_infty}
         J_{\infty} (\nu) - J_{\infty}(\theta) \geq \frac{\min\{\theta,\,1-\theta\}}{2(M +\varepsilon)^2}\,\|\Sigma_{>} - \Sigma_{\leq}\|_F^2 \
 |\nu - \theta|.
     \end{align}
     \item If $\varepsilon>0$, then
     \begin{align}\label{equ:uniform_convergence_J_n_untrimmed}
         \lim_{n \to \infty}\ \sup_{\nu \in \mathcal D_n} |J_{n} (\nu) - J_{\infty} (\nu)| = 0.
     \end{align}
     If $\varepsilon=0$, then, for every~$0 < \delta < \frac{1}{2}$,
     \begin{align}\label{equ:uniform_convergence_J_n}
         \lim_{n \to \infty}\ \sup_{\nu \in \mathcal D_n(\delta)} |J_{n} (\nu) - J_{\infty} (\nu)| = 0.
     \end{align}
    \item If $\varepsilon>0$, every untrimmed QML breakpoint estimator $\hat \theta_{n}\in \arg \min_{\nu \in \mathcal D_n} J_n(\nu)$ is consistent, i.e.~$\lim_{n \to \infty} \hat \theta_n = \theta.$ \newline
If $\varepsilon=0$ and $\delta \in (0, \min\left\{ \theta, 1 - \theta \right\})$, then every trimmed QML breakpoint estimator $\hat \theta_{n,\delta}\in \arg \min_{\nu \in \mathcal D_n(\delta)} J_n(\nu)$ is consistent, i.e.~$\lim_{n \to \infty} \hat \theta_{n,\delta} = \theta.$
 \end{enumerate}
\end{theorem}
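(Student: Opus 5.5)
The plan is to handle the three items in order, with the quantitative Jensen-gap inequality for $\log\det$ (Lemma~\ref{lem:jensen_gap_logdet}) doing the main work in (i).

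\textbf{Item (i).} Fix $\nu>\theta$ and write $A:=\Sigma_{\le}+\varepsilon I_q$, $B:=\Sigma_{>}+\varepsilon I_q$ and $w:=\theta/\nu\in(0,1)$. Then $\Sigma_{0:\nu}+\varepsilon I_q=wA+(1-w)B$ and $\Sigma_{\nu:1}=\Sigma_>$, which gives
\[
J_\infty(\nu)-J_\infty(\theta)=\nu\Bigl[\log\det\bigl(wA+(1-w)B\bigr)-w\log\det A-(1-w)\log\det B\Bigr].
\]
This is $\nu$ times a Jensen gap of $\log\det$. The Hessian of $\log\det$ at $X$ in direction $H$ is $-\Tr(X^{-1}HX^{-1}H)\le-\|H\|_F^2/\|X\|^2$. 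Along the segment between $A$ and $B$ we have $\|X\|\le M+\varepsilon$, and $\lambda_{\min}(X)\ge m+\varepsilon>0$, using (PD) when $\varepsilon=0$. Integrating the Hessian bound twice along the segment gives a gap of at least $\frac{w(1-w)}{2(M+\varepsilon)^2}\|A-B\|_F^2$. Since $\nu w(1-w)=\theta(\nu-\theta)/\nu\ge\theta(\nu-\theta)\ge\min\{\theta,1-\theta\}\,|\nu-\theta|$, this yields \eqref{equ:cusp_separation_J_infty}. The case $\nu<\theta$ is symmetric: the weight becomes $(1-\theta)/(1-\nu)$ and the prefactor is at least $(1-\theta)(\theta-\nu)$.

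Strict concavity on $(\theta,1)$ comes from the same formula. The map $\nu\mapsto\nu\log\det\bigl(\nu^{-1}(\theta A+(\nu-\theta)B)\bigr)$ is the perspective of the strictly concave function $\log\det$, evaluated along an affine curve. Its second derivative equals $-\nu^{-3}\,\Tr\bigl(X^{-1}(\theta(B-A))X^{-1}(\theta(B-A))\bigr)$, which is strictly negative because $A\neq B$. The remaining term $(1-\nu)\log\det B$ is linear in $\nu$. The cusp at $\theta$, and the fact that $\theta$ is the unique minimizer, then follow from the linear lower bound.

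\textbf{Item (ii).} I would first show that $\widehat\Sigma_{0:\nu}\to\Sigma_{0:\nu}$ uniformly on compact subintervals of $(0,1]$, and likewise for $\widehat\Sigma_{\nu:1}$ on compact subintervals of $[0,1)$. Set $S_k:=\frac1n\sum_{t\le k}y_ty_t^\top$. By additivity, $S_{\lfloor\nu n\rfloor}=\nu\widehat\Sigma_{0:\nu}$, and (A2) gives pointwise convergence of $S_{\lfloor\nu n\rfloor}$ to the piecewise-linear limit $\nu\Sigma_{0:\nu}$. Because $k\mapsto S_k$ is monotone in the PSD order, a P\'olya/Dini sandwich upgrades this to uniform convergence: take a finite mesh, squeeze $S_k$ between its mesh neighbours, and use continuity of the limit. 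Dividing by $\nu\ge\delta$ preserves uniformity.

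On $\mathcal D_n(\delta)$, $\log\det$ is Lipschitz with constant $\sqrt q/(\text{spectral floor})$ on matrices with spectrum in $[c,C]$. The floor is $\varepsilon$ when $\varepsilon>0$; when $\varepsilon=0$ it is $m/2$ for large $n$, by (PD) and the uniform convergence. This gives \eqref{equ:uniform_convergence_J_n}.

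For $\varepsilon>0$ the boundary layer $\nu<\delta$ is handled separately. Since $\nu\widehat\Sigma_{0:\nu}\preceq S_{\lfloor\theta n\rfloor}$, whose norm is bounded by $\theta M+o(1)$, we get
\[
\nu q\log\varepsilon\le\nu\log\det(\widehat\Sigma_{0:\nu}+\varepsilon I_q)\le\nu q\log\bigl(C/\nu+\varepsilon\bigr).
\]
Both sides tend to $0$ as $\delta\to0$, uniformly in $n$. The companion term $(1-\nu)\log\det(\widehat\Sigma_{\nu:1}+\varepsilon I_q)$ stays in the uniform regime, and the analogous bounds hold for $J_\infty$. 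Letting first $n\to\infty$ and then $\delta\to0$ gives \eqref{equ:uniform_convergence_J_n_untrimmed}.

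\textbf{Item (iii).} This is the standard argmin argument. Let $\nu_n\in\mathcal D_n$ be the grid point closest to $\theta$. By continuity of $J_\infty$, $J_\infty(\hat\theta_n)\le J_n(\hat\theta_n)+o(1)\le J_n(\nu_n)+o(1)=J_\infty(\theta)+o(1)$. The cusp bound \eqref{equ:cusp_separation_J_infty} then forces $|\hat\theta_n-\theta|\to0$. For $\varepsilon=0$ the same argument runs on $\mathcal D_n(\delta)$, which contains $\theta$ in its interior.

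\textbf{Main obstacle.} I expect the hardest step to be the uniformity in (ii): upgrading the pointwise convergence in (A2) to uniform convergence in $\nu$ through PSD monotonicity of the partial sums, and controlling the endpoint layer, where block second moments built from very few samples carry no law of large numbers.
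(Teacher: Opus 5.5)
Your proposal is correct. Items (i) and (iii) follow the paper's argument closely:
\begin{itemize}
\item In (i) you use the same Jensen-gap identity $J_\infty(\nu)-J_\infty(\theta)=\nu\,G$, the same Hessian lower bound with ceiling $M+\varepsilon$, and the same second-derivative identity for the concavity.
\item In (iii) you use the same comparison with the grid point nearest $\theta$. Your version bounds $c_\infty|\hat\theta_n-\theta|$ directly by $2\sup_{\mathcal D_n}|J_n-J_\infty|+|J_\infty(\nu_n)-J_\infty(\theta)|$, which avoids the paper's subsequence and contradiction step. Note that the first inequality in your chain comes from (ii), not from continuity of $J_\infty$.
\end{itemize}

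The genuine difference is in (ii).

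\emph{The paper's route.} On trimmed grids it combines pointwise convergence at finitely many mesh points with a uniform asymptotic Lipschitz bound for $J_n$ (Lemma~\ref{lem:Lipschitz_J}). That bound is itself obtained from the sequential-continuity Lemma~\ref{lem:seq_continuity_J} by a compactness and contradiction argument. The paper then removes the endpoint layer through a sequential criterion. That criterion uses the vanishing short-window energy of Lemmas~\ref{lem:boundary_energy} and~\ref{lem:boundary_logdet}, and the convergence of the long complementary block to $\theta\Sigma_\le+(1-\theta)\Sigma_>$.

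\emph{Your route.} You use that the partial sums $S_k$ are nondecreasing in the Loewner order. The mesh sandwich $S_{\lfloor\nu_i n\rfloor}\preceq S_{\lfloor\nu n\rfloor}\preceq S_{\lfloor\nu_{i+1}n\rfloor}$, together with $L\preceq X\preceq U\Rightarrow\|X\|\le\max\{\|L\|,\|U\|\}$, upgrades (A2) directly to uniform operator-norm convergence of all prefix and suffix block moments away from the relevant endpoint. This replaces the whole Lipschitz and compactness machinery. At the endpoint you need only the crude bound $\nu\|\widehat\Sigma_{0:\nu}\|\le\theta M+o(1)$. Because $\nu\log(C/\nu+\varepsilon)\to0$, mere boundedness suffices where the paper invokes vanishing energy.

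\emph{What each buys.} Your argument is shorter and more elementary for this theorem, since it exploits the existence of a continuous limit $\nu\Sigma_{0:\nu}$ to sandwich against. The paper's heavier lemmas are built to be reused in Theorem~\ref{thm:QML-robust}, where no such limit exists and a P\'olya-type argument is unavailable.

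A minor point: $S_{\lfloor\nu n\rfloor}=\nu\widehat\Sigma_{0:\nu}$ holds exactly only for $\nu\in\mathcal D_n$, which is all you use.
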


 Though the case~$\varepsilon=0$ of Theorem~\ref{thm:classical_QML_consistency} is classical, its part for~$\varepsilon>0$ is new: the ridge removes the endpoint boundary layers, so the QML objective can be minimized over the full grid and no trimming of the candidate set is needed (see Remark~\ref{rem:untrimmed_minimization}).
In Section~\ref{subsec:heuristic-eps} we give a heuristic argument for Theorem~\ref{thm:classical_QML_consistency}, built from the two ingredients that also drive the proof of its robust counterpart (Theorem~\ref{thm:QML-robust} below): the concavity of~$\log\det$ and the explicit two-sided Jensen-gap estimate of Lemma~\ref{lem:jensen_gap_logdet}. The rigorous proof is given in  Section~\ref{sec:proof_consistency_infinite_signal_to_noise_ratio}. 

\begin{remark}[Why consistency without a rate]\label{rem:no_rate}
Under stronger, quantitative assumptions the breakpoint can be located much more precisely: for pervasive factor models with i.i.d.\ factors and bounded higher moments, \cite{DuanBaiHan2023} estimate the break index up to a bounded error, i.e.\ $|\hat\theta_n-\theta|=O_p(1/n)$; see also the classical change-point rates in~\cite{Hinkley1970,Bai1997}. Theorem~\ref{thm:classical_QML_consistency} only concludes $\hat\theta_n\to\theta$. This is not a shortcoming of the method but the exact price of the qualitative Assumption~\ref{ass:SLLN_in_each_regime}, which demands convergence of the block second moments without quantifying its speed. The following example shows that under our assumptions no rate of convergence can hold.

Let $q=1$ and let the data take just two values: fix $0<a\neq b$, a null sequence $\delta_n\downarrow0$, and set
\[
y_t^{(n)}:=\sqrt a \quad\text{for } t\leq\lfloor(\theta+\delta_n)n\rfloor,
\qquad
y_t^{(n)}:=\sqrt b \quad\text{for } t>\lfloor(\theta+\delta_n)n\rfloor,
\]
so the jump sits at $\theta+\delta_n$, slightly to the right of $\theta$. Every fixed macroscopic interval contained in $(0,\theta)$ or in $(\theta,1)$ eventually contains no jump, so Assumption~\ref{ass:SLLN_in_each_regime} holds with breakpoint~$\theta$, $\Sigma_\leq=a$, $\Sigma_>=b$, and moreover $\Delta_{\mathrm{within}}=0$ and $\Delta_{\mathrm{between}}=|a-b|>0$: all assumptions hold with constants independent of the sequence~$\delta_n$. On the other hand, at every finite~$n$ the data are exactly two-level with jump at $\theta+\delta_n$, and an elementary computation shows that $J_n$ is minimized at this jump up to $O(1/n)$, i.e.\ $\hat\theta_n=\theta+\delta_n+O(1/n)$. Since $\delta_n$ may tend to zero arbitrarily slowly, no rate for $|\hat\theta_n-\theta|$ can be deduced: consistency is the exact conclusion, and Theorem~\ref{thm:classical_QML_consistency} is sharp in this sense. As the example satisfies even Assumption~\ref{ass:SLLN_in_each_regime}, the same applies a fortiori to Theorem~\ref{thm:QML-robust}.

The example also shows where a rate must come from: the estimation error $\delta_n$ is exactly the speed at which the block second moments in Assumption~\ref{ass:SLLN_in_each_regime} converge. Quantifying that speed --- which is what the stochastic assumptions of~\cite{DuanBaiHan2023} do --- is precisely what recovers a rate. In short: rates are the price of unquantified hypotheses, and the example shows the price is unavoidable.
\end{remark}

Using Remark~\ref{rem:assumptions_as_signal_to_noise_ratios}, the interpretation of Theorem~\ref{thm:classical_QML_consistency} is that the QML breakpoint estimator is consistent under infinite signal-to-noise ratio. However, data is usually corrupted and the corruption can be systemic over the whole dataset. There is no reason for the noise to be independent from the data making it hard or impossible to filter it out. One needs to model the in-regime empirical second moments as
$\widehat \Sigma_{a:b} = \Sigma_{\leq} + E_{a:b}$, where~$E_{a:b}$ denotes the measurement error, and there is no reason to believe that~$\lim_{n \to \infty} E_{a:b} = 0$, implying that Assumption~\ref{ass:SLLN_in_each_regime} is too strong; there might be no underlying SLLN at all. We henceforth take on a signal-theoretic point of view and show that a high enough signal-to-noise ratio should be sufficient to detect the breakpoint, leading us to the main result of this article.

\begin{theorem}[Consistency of the QML breakpoint estimator]
\label{thm:QML-robust}
We consider~$\varepsilon \geq 0$, and if~$\varepsilon =0$ we assume~\ref{ass:positive_definiteness}. It holds:
\begin{enumerate}[label=(\roman*)]
    \item Under the Assumption~\ref{ass:bounded_second_moment} and~\ref{ass:signal_weak_domination}, there are~$\bar\delta, c>0$ such that
    \begin{align}
        \liminf_{n \to \infty} J_n(\nu) - J_n(\theta) \geq c |\nu - \theta|, \quad \mbox{for all~$\nu \in (\theta -\bar\delta, \theta+ \bar\delta)\cap(0,1)$.}
    \end{align}
    This means that asymptotically the breakpoint~$\theta$ is a local minimum of the limiting QML objective function~$
    \liminf_{n} J_n$.\medskip
    
    \item Under the Assumption~\ref{ass:bounded_second_moment} and~\ref{ass:aposeterior_signal_separation}, there is~$c>0$ such that
        \begin{align}\label{equ:minimizer_a4}
        \liminf_{n \to \infty} J_n(\nu) - J_n(\theta) \geq c |\nu - \theta|, \quad \mbox{for all~$\nu \in (0,1)$.}
    \end{align}
    
      In particular, if $\varepsilon>0$, every untrimmed QML breakpoint estimator $\hat \theta_{n}\in \arg \min_{\nu \in \mathcal D_n} J_n(\nu)$ is consistent, i.e.~$\lim_{n \to \infty} \hat \theta_n = \theta$. \newline
      If $\varepsilon=0$, then, for every~$\delta \in (0,\min\{\theta,1-\theta\})$, every trimmed QML breakpoint estimator $\hat \theta_{n,\delta}\in \arg \min_{\nu \in \mathcal D_n(\delta) } J_n(\nu)$ is consistent, i.e.~$\lim_{n \to \infty} \hat \theta_{n,\delta} = \theta$.
    
\end{enumerate}
\end{theorem}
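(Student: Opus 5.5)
We use the additivity of empirical second moments on macroscopic intervals and the two-sided Jensen-gap bound for $\log\det$ (Lemma~\ref{lem:jensen_gap_logdet}). By symmetry we treat only $\nu>\theta$; the case $\nu<\theta$ is identical with the roles of the regimes exchanged.

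\textbf{Decomposition.} Set $A_\nu:=\widehat\Sigma_{0:\theta}+\varepsilon I_q$, $B_\nu:=\widehat\Sigma_{\theta:\nu}+\varepsilon I_q$ and $C_\nu:=\widehat\Sigma_{\nu:1}+\varepsilon I_q$. Up to $O(1/n)$ floor errors,
\[
\widehat\Sigma_{0:\nu}+\varepsilon I_q=\tfrac{\theta}{\nu}A_\nu+\tfrac{\nu-\theta}{\nu}B_\nu,
\qquad
\widehat\Sigma_{\theta:1}+\varepsilon I_q=\tfrac{\nu-\theta}{1-\theta}B_\nu+\tfrac{1-\nu}{1-\theta}C_\nu .
\]
Substituting these into $J_n$ gives
\[
J_n(\nu)-J_n(\theta)
=\nu\,\mathrm{Gap}_{\theta/\nu}(A_\nu,B_\nu)
-(1-\theta)\,\mathrm{Gap}_{(\nu-\theta)/(1-\theta)}(B_\nu,C_\nu)+o(1).
\]
Here $\mathrm{Gap}_w(X,Y):=\log\det(wX+(1-w)Y)-w\log\det X-(1-w)\log\det Y\ge0$ is the concavity gap. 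The identity holds because the terms $(\nu-\theta)\log\det B_\nu$ cancel exactly between the two expressions.

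\textbf{Two-sided gap bounds.} Lemma~\ref{lem:jensen_gap_logdet} bounds the gap from both sides:
\[
\frac{w(1-w)}{2(M'+\varepsilon)^2}\|X-Y\|_F^2\;\le\;\mathrm{Gap}_w(X,Y)\;\le\;\frac{w(1-w)}{2(m'+\varepsilon)^2}\|X-Y\|_F^2,
\]
where $M'$ and $m'$ are spectral bounds on the segment between $X$ and $Y$. All blocks used lie in $\mathcal I^{\rm br}_\theta$ or $\mathcal I^{\rm cut}_\theta$, so asymptotically their spectra lie in $[m-o(1),M+o(1)]$.

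\textbf{Term estimates.} Apply the lower bound to the first gap. The blocks $A_\nu$ and $B_\nu$ lie on opposite sides of $\theta$, so
\[
\nu\,\mathrm{Gap}_{\theta/\nu}(A_\nu,B_\nu)\ \ge\ \frac{\theta(\nu-\theta)}{2\nu(M+\varepsilon)^2}\,\Delta_{\rm between}^2+o(1).
\]
Apply the upper bound to the second gap. The blocks $B_\nu$ and $C_\nu$ lie on the same side of $\theta$, so
\[
(1-\theta)\,\mathrm{Gap}_{(\nu-\theta)/(1-\theta)}(B_\nu,C_\nu)\ \le\ \frac{(\nu-\theta)(1-\nu)}{2(1-\theta)(m+\varepsilon)^2}\,\Delta_{\rm within}^2+o(1).
\]
Taking $\liminf$, we obtain
\[
\liminf_n\,[J_n(\nu)-J_n(\theta)]\ \ge\ \frac{\nu-\theta}{2}\Big[\frac{\theta}{\nu}\frac{\Delta_{\rm between}^2}{(M+\varepsilon)^2}-\frac{1-\nu}{1-\theta}\frac{\Delta_{\rm within}^2}{(m+\varepsilon)^2}\Big].
\]

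\textbf{Conclusion of part (i).} As $\nu\downarrow\theta$ the bracket tends to $\Delta_{\rm between}^2/(M+\varepsilon)^2-\Delta_{\rm within}^2/(m+\varepsilon)^2$. This is positive exactly under (A3), which gives a uniform $c>0$ on $(\theta,\theta+\bar\delta)$.

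\textbf{Conclusion of part (ii).} Here we need the bracket to be bounded below uniformly over $\nu\in(\theta,1)$. Two adjustments are required.
\begin{itemize}
\item Using the crude factors $\theta/\nu\ge\theta$ and $(1-\nu)/(1-\theta)\le1$ loses too much. Instead we redistribute: the gap on the $\nu>\theta$ side involves only $\Delta_{\rm within}$ restricted to the right regime, and the combination $\theta/\nu$ against $(1-\nu)/(1-\theta)$ must be balanced more carefully.
\item The key choice is to apply the mixed-weight Jensen bound with weights referenced to the full split. One can regroup so that the comparison takes the form $\nu(1-\nu)\,\Delta_{\rm between}^2\,\theta/\nu$ against quantities bounded by $\tfrac14\Delta_{\rm within}^2$. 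Optimizing over $\nu$ then produces exactly the factor $4\theta(1-\theta)$, that is, $(2\sqrt{\theta(1-\theta)})^{-1}$ after taking square roots.
\end{itemize}
This optimization, which yields the geometric constant in (A4), is the main obstacle. I would first try writing both gaps relative to the common reference $\widehat\Sigma_{0:1}$ and using $w(1-w)\le 1/4$ on the within-regime term. If that fails to produce the constant, I would instead compare $J_n(\nu)$ directly with $J_n(\theta)$ through the three-block decomposition and optimize a quadratic in $\sqrt{\cdot}$.

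\textbf{Consistency from (ii).} The pointwise lower bound is upgraded to uniform control on grids. For $\varepsilon>0$, Lemmas~\ref{lem:boundary_energy} and~\ref{lem:boundary_logdet} handle the endpoint windows, since all eigenvalues are at least $\varepsilon$ and $\log\det$ is then Lipschitz. For $\varepsilon=0$ we work on the trimmed grid and use (PD). Equicontinuity of $J_n$ in $\nu$, with modulus $O(|\nu-\nu'|)$ plus $o(1)$ coming from the additivity and the spectral bounds, together with compactness, gives
\[
\liminf_n \inf_{|\nu-\theta|\ge\eta}\,[J_n(\nu)-J_n(\theta)]>0 .
\]
Since also $J_n(\lfloor\theta n\rfloor/n)-J_n(\theta)\to0$, every minimizer $\hat\theta_n$ eventually lies within $\eta$ of $\theta$, for each $\eta>0$, which is consistency.
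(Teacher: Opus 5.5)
\textbf{The gap is in part (ii).} Your decomposition into a between-regime and a within-regime Jensen gap, and the two-sided bounds you apply to them, are exactly the paper's. They correctly give
\[
\liminf_{n\to\infty}\bigl(J_n(\nu)-J_n(\theta)\bigr)\ \ge\ \tfrac12(\nu-\theta)\,T_+(\nu),
\qquad
T_+(\nu)=\frac{\theta}{\nu}S-\frac{1-\nu}{1-\theta}W,
\]
with $S=\Delta_{\mathrm{between}}^2/(M+\varepsilon)^2$ and $W=\Delta_{\mathrm{within}}^2/(m+\varepsilon)^2$. The mirror image for $\nu<\theta$ has $T_-(\nu)=\frac{1-\theta}{1-\nu}S-\frac{\nu}{\theta}W$, and part (i) then follows as you say.

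Part (ii), the heart of the theorem, is never completed. You do not show that $T_\pm$ stay uniformly positive under (A4); instead you suggest the decomposition must be changed (``redistribute'', ``weights referenced to the full split''). No change is needed. Your first fallback would in fact fail: bounding $w(1-w)\le\frac14$ in the within-regime gap replaces the factor $(\nu-\theta)(1-\nu)/(1-\theta)$ by a constant. The negative term then no longer vanishes as $\nu\downarrow\theta$, and it cannot be beaten by the signal term, which is only of order $\nu-\theta$.

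The missing step is a single AM--GM inequality applied to the bracket you already have:
\begin{align*}
T_+(\nu)&=\frac{\theta S}{\nu}+\frac{\nu W}{1-\theta}-\frac{W}{1-\theta}\ \ge\ \frac{2\sqrt{\theta(1-\theta)SW}-W}{1-\theta},\\
T_-(\nu)&=\frac{(1-\theta)S}{1-\nu}+\frac{(1-\nu)W}{\theta}-\frac{W}{\theta}\ \ge\ \frac{2\sqrt{\theta(1-\theta)SW}-W}{\theta}.
\end{align*}
For $W>0$ the common numerator is positive if and only if $4\theta(1-\theta)S>W$, which is (A4) squared. For $W=0$ one simply has $T_+\ge\theta S$ and $T_-\ge(1-\theta)S$. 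This is where the geometric factor $\{2\sqrt{\theta(1-\theta)}\}^{-1}$ comes from, and it yields the linear lower bound \eqref{equ:minimizer_a4} on all of $(0,1)$.

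\textbf{Two smaller points.}

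First, your claim that all blocks have spectra asymptotically in $[m-o(1),M+o(1)]$ is inaccurate. $M$ is a ceiling only over break-anchored blocks, and $\widehat\Sigma_{\nu:1}$ with $\nu>\theta$ is not one. This is harmless but should be stated correctly:
\begin{itemize}
\item the lower gap bound needs a ceiling only on the segment between $A_\nu$ and $B_\nu$, both in $\mathcal I_\theta^{\rm br}$;
\item the upper gap bound needs a floor only on the segment between $B_\nu$ and $C_\nu$, both in $\mathcal I_\theta^{\rm cut}$;
\item the remaining finite bounds come from Lemma~\ref{lem:opnorm_Sigma_ab}, once you note that (A1) together with (A3) or (A4) forces $\Delta_{\mathrm{within}}<\infty$.
\end{itemize}

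Second, in the consistency step the modulus of continuity of $J_n$ is not $O(|\nu-\nu'|)+o(1)$ up to the endpoints. Short end windows carry no spectral control, and the uniform Lipschitz bound of Lemma~\ref{lem:Lipschitz_J} holds only on $[\eta,1-\eta]$. For $\varepsilon>0$ you therefore need a separate argument excluding $\hat\theta_n\to0$ or $\hat\theta_n\to1$. The paper applies the two-gap identity directly at $\nu=\hat\theta_n\to1$. There the weighted between gap stays at least $\theta(1-\theta)\Delta_{\mathrm{between}}^2/\{2(M+\varepsilon)^2\}$ by Lemma~\ref{lem:endpoint_large_block}, and the within gap is $o(1)$ by Lemma~\ref{lem:boundary_logdet}. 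Your cited lemmas suffice for this, but the step has to be carried out.
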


The proof of Theorem~\ref{thm:QML-robust} is carried out in Section~\ref{sec:proof_consistency_finite_signal_to_noise}, building on the structure of the proof of Theorem~\ref{thm:classical_QML_consistency} and on the same two ingredients (see Section~\ref{subsec:heuristic-eps}).  The constants~$c$ are explicit. Writing $S:=\Delta_{\text{between}}^2/(M+\varepsilon)^2$ and $W:=\Delta_{\text{within}}^2/(m+\varepsilon)^2$ for the normalized signal and noise levels, one may take $c=(S-W)/4$ in part~(i), see~\eqref{eq:const_local}, and $c$ as in~\eqref{eq:const_global} in part~(ii), namely $c=\tfrac12\min\{\theta,1-\theta\}\,S$ if $W=0$ and $c=\bigl(2\sqrt{\theta(1-\theta)SW}-W\bigr)/(2\max\{\theta,1-\theta\})$ if $W>0$. Both constants depend only on the gap in Assumptions~(A3) and~(A4), i.e.~on the amount by which the signal-to-noise ratio~$\Delta_{\text{between}}/\Delta_{\text{within}}$ exceeds the threshold on the respective right-hand side.\\

Proposition~\ref{prop:sharp-A4-fixed-theta} below shows
that the ridge-optimized signal-to-noise threshold in~\textup{(A4*)} is
necessary for a uniform global-recovery guarantee. In this sense
Theorem~\ref{thm:QML-robust} is optimal. This does not assert that the
additional conditioning factor in the fixed-$\varepsilon$
condition~\textup{(A4)} is itself necessary.

\begin{figure}[t]
\centering
\begin{tikzpicture}
\begin{axis}[
  width=0.72\textwidth,
  height=0.32\textwidth,
  xmin=0, xmax=1,
  ymin=0.53, ymax=0.66,
  xlabel={$ \nu $},
  ylabel={$ J_\infty(\nu) $},
  grid=both,
  ticklabel style={font=\small},
  label style={font=\small},
]

\def\theta{0.30}
\def\nustar{0.50}
\def\K{1.0}
\def\r{1.05}
\def\eps{0.1}

\pgfmathsetmacro{\a}{1+\K+\r}
\pgfmathsetmacro{\b}{1+\K}
\pgfmathsetmacro{\c}{1}

\addplot[thick, domain=0.001:\theta, samples=400]
({x},{
  x*ln(\a+\eps)
  + (1-x)*ln( ( ((\theta - x)*\a + (\nustar-\theta)*\b + (1-\nustar)*\c)/(1-x) ) + \eps )
});

\addplot[thick, domain=\theta:\nustar, samples=600]
({x},{
  x*ln( ( (\theta*\a + (x-\theta)*\b)/x ) + \eps )
  + (1-x)*ln( ( ((\nustar-x)*\b + (1-\nustar)*\c)/(1-x) ) + \eps )
});

\addplot[thick, domain=\nustar:0.999, samples=400]
({x},{
  x*ln( ( (\theta*\a + (\nustar-\theta)*\b + (x-\nustar)*\c)/x ) + \eps )
  + (1-x)*ln(\c+\eps)
});

\addplot[dashed, thick] coordinates {(\theta,0.53) (\theta,0.66)};
\addplot[dashed, thick] coordinates {(\nustar,0.53) (\nustar,0.66)};

\addplot[only marks, mark=*, mark size=1.8pt]
coordinates {(\theta,0.5725671) (\nustar,0.5498060)};

\end{axis}
\end{tikzpicture}

\caption{Illustration of the limiting objective $J_\infty(\nu)$ in the deterministic three-level example from Proposition~\ref{prop:sharp-A4-fixed-theta}
(with $\theta=0.30$, $\nu_\star=0.5$, $K=1$, $r=1.05$, $\varepsilon=0.1$). Dashed lines indicate $\theta$ and $\nu_\star$.}
\label{fig:illustration_optimality}
\end{figure}

\begin{proposition}
\label{prop:sharp-A4-fixed-theta}
For $\theta\in(0,\tfrac12)$ and~$0< K < r < \infty$ we define the deterministic signal time series $(y_t)_{1 \leq t \leq n}$ via
\begin{equation}\label{eq:triangular-array-sharpA4}
y_t
=
\begin{cases}
\sqrt{1 + K + r}, & 1 \le t\le \lfloor \theta n\rfloor,\\
\sqrt{1+K}, & \lfloor \theta n\rfloor < t\le \lfloor 0.5  n\rfloor,\\
\sqrt{1}, & \lfloor 0.5  n\rfloor < t\le n .
\end{cases}
\end{equation}
Then at~$\theta$ and~$0.5$ it holds
\begin{align}\label{equ:optimality_signal_and_noise}
  \Delta_{\mathrm{within}} (\theta) = K,
\quad
    \Delta_{\mathrm{between}} (\theta) =r,
\quad
  \frac{\Delta_{\mathrm{between}} (\theta)}{\Delta_{\mathrm{within}} (\theta)}=\frac{r}{K}, \quad \mbox{and}\\
  \Delta_{\mathrm{within}} (0.5) = r,
\quad
    \Delta_{\mathrm{between}} (0.5) =K,
\quad   \frac{\Delta_{\mathrm{between}} (0.5)}{\Delta_{\mathrm{within}} (0.5)}=\frac{K}{r}.\label{equ:optimality_signal_and_noise_at_05}
\end{align}
Moreover, if 
\begin{align}\label{equ:counterexample_conditions}
 \frac{r}{K} = \frac{\Delta_{\mathrm{between}} (\theta)}{\Delta_{\mathrm{within}}(\theta)} < \frac{1}{2 \sqrt{\theta (1- \theta)}}
\end{align}
then, for every regularization parameter $\varepsilon >0$,  the pointwise limit $J_\infty(\nu):=\lim_{n}J_n(\nu)$ exists for every $\nu\in(0,1)$ and has a unique minimizer at $0.5$. In particular, it satisfies
\begin{align}\label{equ:counterexample_energy}
J_\infty(0.5) < J_\infty(\theta).
\end{align}
\end{proposition}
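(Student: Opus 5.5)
The proof splits into two parts: first compute the four $\Delta$'s exactly, then reduce the minimization of $J_\infty$ to a comparison of two values, $J_\infty(\theta)$ and $J_\infty(0.5)$.

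\textbf{The $\Delta$'s.} Here $q=1$ and the signal takes three second-moment levels $a:=1+K+r$, $b:=1+K$, $c:=1$ on $[0,\theta]$, $(\theta,0.5]$, $(0.5,1]$. Every block second moment $\widehat\Sigma_{a':b'}$ converges, because floor effects are $O(1/n)$. Its limit is the length-weighted average of the levels over $[a',b']$.
\begin{itemize}
\item At the split $\theta$, every left block has limit $a$. The right blocks have limits filling $[c,b]$. Hence $\Delta_{\mathrm{within}}(\theta)=b-c=K$ and $\Delta_{\mathrm{between}}(\theta)=a-b=r$.
\item At the split $0.5$, the left limits fill $[b,a]$ and the right limits all equal $c$. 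Hence $\Delta_{\mathrm{within}}(0.5)=a-b=r$ and $\Delta_{\mathrm{between}}(0.5)=b-c=K$.
\end{itemize}
This gives \eqref{equ:optimality_signal_and_noise} and \eqref{equ:optimality_signal_and_noise_at_05}.

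\textbf{Existence of $J_\infty$ and reduction to a two-point comparison.} For fixed $\nu$ and $\varepsilon>0$, both block means converge. So $J_n(\nu)\to J_\infty(\nu)=\nu\log(m_L(\nu)+\varepsilon)+(1-\nu)\log(m_R(\nu)+\varepsilon)$, where $m_L,m_R$ are the piecewise-linear mixture averages. The quantity $\nu\,m_L(\nu)=\int_0^\nu \sigma^2$ is piecewise affine in $\nu$. Hence on each of $(0,\theta)$, $(\theta,0.5)$, $(0.5,1)$ the term $\nu\log(m_L+\varepsilon)$ is a perspective of the concave function $\log$ composed with an affine map, and therefore concave. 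The right term is treated the same way, as in Theorem~\ref{thm:classical_QML_consistency}(i).

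Consequently the minimum of $J_\infty$ over $[0,1]$ is attained only at $0,\theta,0.5,1$. Concavity of $\log$ (Jensen) gives $J_\infty(0)=J_\infty(1)=\log(\bar m+\varepsilon)>J_\infty(\nu)$ for every interior $\nu$, since the data are non-constant. We would check strict concavity on the middle piece and on the outer pieces, where the mixing weight genuinely varies; this yields uniqueness. Everything therefore reduces to \eqref{equ:counterexample_energy}, i.e.\ $D(\varepsilon):=J_\infty(\theta)-J_\infty(0.5)>0$ for every $\varepsilon>0$.

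\textbf{Sign of $D(\varepsilon)$.} Write $G_\nu(\varepsilon):=J_\infty(0)-J_\infty(\nu)$, the Jensen gap, so that $D=G_{0.5}-G_\theta$.

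\emph{Large-$\varepsilon$ asymptotics.} A second-order expansion gives $G_\nu(\varepsilon)\sim \nu(1-\nu)(m_L-m_R)^2/(2\varepsilon^2)$. The two mean differences are:
\begin{itemize}
\item at $\theta$: $m_L-m_R=r+K/(2(1-\theta))$;
\item at $0.5$: $m_L-m_R=K+2\theta r$.
\end{itemize}
Comparing $\sqrt{\theta(1-\theta)}\,\bigl(r+K/(2(1-\theta))\bigr)$ with $\tfrac12(K+2\theta r)$ and factoring out $\sqrt\theta\,(\sqrt{1-\theta}-\sqrt\theta)>0$ (using $\theta<\tfrac12$), the inequality $G_{0.5}>G_\theta$ at leading order is exactly $2\sqrt{\theta(1-\theta)}\,r<K$. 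This is \eqref{equ:counterexample_conditions}, and it explains where the threshold comes from.

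\emph{All $\varepsilon>0$: the main obstacle.} The difficulty is that the expansion only settles large $\varepsilon$. My first attempt is the integral representation $\log(x+\varepsilon)=\int_\varepsilon^\infty\bigl(\tfrac1{1+s}-\tfrac1{x+s}\bigr)\,ds+\text{const}$. Since $D(\infty)=0$, this gives $D(\varepsilon)=\int_\varepsilon^\infty h(t)\,dt$, with
\[
h(t)=-D'(t)=\sum_{\nu\in\{\theta,0.5\}}\pm\Bigl[\nu\,\frac{1}{m_L(\nu)+t}+(1-\nu)\,\frac{1}{m_R(\nu)+t}\Bigr].
\]
It then suffices to show $h(t)>0$ for all $t>0$. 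Clearing denominators turns this into positivity of an explicit polynomial in $t$ whose coefficients are polynomials in $K,r,\theta$. The leading coefficient reproduces the condition $2\sqrt{\theta(1-\theta)}\,r<K$ (squared). I would try to show the remaining coefficients are nonnegative under this condition together with $K<r$ and $\theta<\tfrac12$.

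If coefficientwise positivity fails, the fallback is to show that $t\mapsto D(t)$ has no interior zero. This would use the fact that $D$ is a difference of two Stieltjes-type functions of $t$ with only four poles, namely at $-m_L(\theta)$, $-m_R(\theta)$, $-m_L(0.5)$, $-m_R(0.5)$.
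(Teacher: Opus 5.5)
Your computation of the four $\Delta$'s is correct. Your reduction step is also a valid, and slicker, alternative to the paper's Cases~1--3. Since $\nu\,m_L(\nu)$ and $(1-\nu)\,m_R(\nu)$ are affine on each of $(0,\theta)$, $(\theta,\tfrac12)$, $(\tfrac12,1)$, the perspective argument makes $J_\infty$ concave on each piece. The concavity is strict because the relevant block mean is non-constant there. Together with $J_\infty(0)=J_\infty(1)=\log(\bar m+\varepsilon)>J_\infty(\nu)$, this reduces everything to $J_\infty(\tfrac12)<J_\infty(\theta)$ for every $\varepsilon>0$.

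But that inequality is the entire content of the proposition, and you do not prove it. Your large-$\varepsilon$ expansion is correct and explains where the threshold comes from, but it says nothing at finite $\varepsilon$. The integral representation only moves the problem to $h(t)>0$. Let $x_i\in\{a,m_R(\theta),m_L(\tfrac12),c\}$ be the four poles. Both splits put total weight one on points with the same mean $\bar m$, so the $t^3$ and $t^2$ coefficients of $h(t)\prod_i(x_i+t)$ cancel, and this polynomial is affine, $\alpha t+\beta$. Here $\alpha>0$ is exactly your threshold. The condition $\beta\ge0$ is equivalent to $h(0)\ge0$, an inequality among four reciprocals that you leave open. It is not a formality: for $\theta=0.1$, $K=1$, $r=100$ one has $K<r$ and $\theta<\tfrac12$, yet $h(0)<0$. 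The ``no interior zero'' fallback is not an argument either.

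The missing idea is to re-center the comparison at the middle level $b$ instead of at the global mean $\bar m$. Your gaps $G_\theta$ and $G_{0.5}$ live on the overlapping intervals $[m_R(\theta),a]$ and $[c,m_L(\tfrac12)]$. The curvature bounds of Lemma~\ref{lem:jensen_gap_logdet} then come with mismatched constants, $(m_R(\theta)+\varepsilon)^{-2}>(m_L(\tfrac12)+\varepsilon)^{-2}$, and these point the wrong way. Take $\lambda=2\theta$ and $\mu=1/(2(1-\theta))$. The relations $\theta=\tfrac12\lambda$, $\tfrac12=\mu(1-\theta)$ and $\tfrac12(1-\lambda)=(1-\theta)(1-\mu)$ give the exact identity
\[
J_\infty(\tfrac12)-J_\infty(\theta)=\tfrac12\,G(b+\varepsilon,a+\varepsilon;\lambda)-(1-\theta)\,G(b+\varepsilon,c+\varepsilon;\mu).
\]
The first gap lives on $[b,a]$ and the second on $[c,b]$. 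Since $(x+\varepsilon)^{-2}$ is decreasing, the upper bound for the first gap and the lower bound for the second carry the same constant $(b+\varepsilon)^{-2}$. Hence
\[
J_\infty(\tfrac12)-J_\infty(\theta)\le\frac{1-2\theta}{2(b+\varepsilon)^2}\Bigl(\theta r^2-\frac{K^2}{4(1-\theta)}\Bigr)<0
\]
for every $\varepsilon>0$ at once. The same re-centering applied to $x\mapsto 1/(x+t)$ would also prove your $h(t)>0$, but then the integral representation is no longer needed.
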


For an illustration of Proposition~\ref{prop:sharp-A4-fixed-theta} we refer to Figure~\ref{fig:illustration_optimality}. The proof is given in Section~\ref{sec:proof_optimality}.
\begin{remark}[Optimality of (A4*) for QML-breakpoint detection]
\label{rem:sharp-A4-fixed-theta}
In the situation of
Proposition~\ref{prop:sharp-A4-fixed-theta} there are two structural changes,
at~$\theta$ and at~$\nu_\star=\tfrac12$. The designation of~$\theta$ as the
genuine break is not a convention: according to the
signal-to-noise criterion $\Delta_{\mathrm{between}}(\nu)/\Delta_{\mathrm{within}}(\nu)>1$ at a candidate~$\nu$ (the candidate-level form of \textup{(A3$^\ast$)} in Remark~\ref{rem:ridge_regularization_and_SNR}; the display~$\eqref{eq:Delta-within}$--$\eqref{eq:Delta-between}$ defines the $\nu$-dependent quantities),
equations~\eqref{equ:optimality_signal_and_noise}
and~\eqref{equ:optimality_signal_and_noise_at_05} give
\begin{align}
  \frac{\Delta_{\mathrm{between}} (\theta)}{\Delta_{\mathrm{within}} (\theta)} >1
  \quad \mbox{and} \quad
  \frac{\Delta_{\mathrm{between}} (\nu_\star)}
       {\Delta_{\mathrm{within}} (\nu_\star)} < 1.
\end{align}
Nevertheless, under the insufficient global SNR
in~\eqref{equ:counterexample_conditions}, the QML objective has its unique
global minimum at the non-dominant change~$\nu_\star$ for every
$\varepsilon>0$. Although~\textup{(A3*)} holds and a sufficiently large ridge
therefore makes~$\theta$ a local minimum, that local condition cannot force
global recovery.

More precisely, fix $\theta\in(0,\tfrac12)$ and write
$c_\theta:=\{2\sqrt{\theta(1-\theta)}\}^{-1}$. For every
$s\in(1,c_\theta)$, choosing $r=sK$ in
Proposition~\ref{prop:sharp-A4-fixed-theta} produces a scalar deterministic
array with
$\Delta_{\mathrm{between}}(\theta)/\Delta_{\mathrm{within}}(\theta)=s$
whose QML objective is minimized at~$\nu_\star$ for every ridge parameter.
Conversely,~\textup{(A4*)} and
Remark~\ref{rem:ridge_regularization_and_SNR} provide a ridge for which
Theorem~\ref{thm:QML-robust} guarantees global recovery whenever the ratio
exceeds~$c_\theta$. Hence the threshold
\[
\frac{\Delta_{\mathrm{between}}}{\Delta_{\mathrm{within}}}\;>\;\frac{1}{2\sqrt{\theta(1-\theta)}}
\]
is exact for a uniform ridge-optimized global-recovery
guarantee.
\end{remark}

\subsection{Proof heuristic and the role of the regularization parameter $\varepsilon$}
\label{subsec:heuristic-eps}

This subsection explains the heuristic behind the proof of Theorem~\ref{thm:classical_QML_consistency} and Theorem~\ref{thm:QML-robust}. For that purpose let us assume that
\begin{align}\label{equ:heuristic_QML_consistency}
    \widehat \Sigma_{a:b} \approx \Sigma_{a:b} \qquad \mbox{for all } 0 \leq a < b \leq 1.
\end{align}
This heuristic operates within the SLLN picture of Theorem~\ref{thm:classical_QML_consistency}, in which the population targets~$\Sigma_{a:b}$ are defined; the proof of the robust Theorem~\ref{thm:QML-robust} does not rely on these limiting objects and instead works directly with the empirical block moments~$\widehat\Sigma_{a:b}$ (see Section~\ref{sec:proof_consistency_finite_signal_to_noise}).
Strict concavity of the map $A\mapsto \log\det(A)$ then yields that for~$\nu \in (\theta,1)$ (the case~$\nu \in (0,\theta)$ is symmetric)
\begin{align}
    J_n(\nu) & \approx J_\infty (\nu) = \nu \log \det \left( \Sigma_{0:\nu} \right) + (1- \nu) \log \det \left(\Sigma_{\nu:1}\right) \\
        & = \nu \log \det \left( \frac{\theta}{\nu}\Sigma_{\leq} + \frac{\nu - \theta}{\nu} \Sigma_{>} \right) +  (1- \nu) \log \det \left(\Sigma_{>}\right)  \\
        & >   \theta \log \det \left( \Sigma_{\leq} \right) +   (1- \theta) \log \det \left(\Sigma_{>}\right) = J_\infty(\theta) \approx J_n(\theta).
\end{align}
Therefore, the breakpoint~$\theta$ is a global minimizer of~$J_\infty$, and as~$J_n$ converges to~$J_\infty$, the QML estimator should be consistent.

The proof of Theorem~\ref{thm:classical_QML_consistency} and Theorem~\ref{thm:QML-robust} then consists of quantifying the concavity estimate, more precisely by calculating and comparing two Jensen gaps. The main ingredient in Theorem~\ref{thm:QML-robust} is then the estimate (cf.~\eqref{eq:jn-lower-pre} below):
\[
\liminf_{n\to\infty}\bigl(J_n(\nu;\varepsilon)-J_n(\theta;\varepsilon)\bigr)
\ \ge\ \frac12\,T_+(\nu)\,(\nu-\theta),
\]
where
\begin{align}
\label{equ:curcial_estimate_robust_QML}
T_+(\nu)
=\frac{\theta}{\nu(M+\varepsilon)^2}\,\Delta_{\mathrm{between}}^2
-\frac{1-\nu}{(1-\theta)(m+\varepsilon)^2}\,\Delta_{\mathrm{within}}^2.
\end{align}
Thus the ``cusp sharpness'' at the true breakpoint~$\nu= \theta$ is governed by the contrast
between two quadratic contributions: a positive signal term of size $\Delta_{\mathrm{between}}^2/(M+\varepsilon)^2$ and a negative noise term of size $\Delta_{\mathrm{within}}^2/(m+\varepsilon)^2$.
The sufficient signal-to-noise conditions (A3)--(A5) are precisely those that force
$T_+(\nu)$, together with its left-hand analogue, to stay uniformly positive, either in a neighborhood of $\theta$ for a local minimum, or on
the whole open interval~$(0,1)$ for the global energy gap under~(A4).\medskip

\begin{remark}[The QML objective is biased toward a central split]\label{rem:centering_bias}
The estimate~\eqref{equ:curcial_estimate_robust_QML} also reveals a structural bias of the QML criterion toward a balanced split at the center~$\nu=\tfrac12$. Indeed, the geometric factor $\tfrac{1}{2\sqrt{\theta(1-\theta)}}$ entering~(A4) and~(A4$^\ast$), viewed as a function of the break fraction~$\theta$, is minimized at $\theta=\tfrac12$, where it equals~$1$, and grows without bound as $\theta\to0$ or $\theta\to1$. Hence a central breakpoint is the easiest to recover: a signal-to-noise ratio $\Delta_{\text{between}}/\Delta_{\text{within}}$ only slightly above~$1$ already certifies it as the global minimizer. An off-center breakpoint, by contrast, must clear the higher bar $\tfrac{1}{2\sqrt{\theta(1-\theta)}}>1$. Proposition~\ref{prop:sharp-A4-fixed-theta} shows that this geometric bar is sharp for ridge-optimized global recovery at every off-center break fraction (see also Remark~\ref{rem:sharp-A4-fixed-theta}).
\end{remark}

\begin{remark}[Correct choice of the regularization parameter~$\varepsilon>0$]
For a candidate~$\nu>\theta$, the best choice of~$\varepsilon >0$ is the one that maximizes~$T_+(\nu)$ in equation~\eqref{equ:curcial_estimate_robust_QML}; for~$\nu<\theta$, the left-hand analogue applies. The parameter $\varepsilon$ influences the size of $(M+\varepsilon)^{-2}$ and $(m+\varepsilon)^{-2}$. Let us first consider the two extremes. Choosing $\varepsilon \ll 1$ very small: If $m$ is small or zero, then
$(m+\varepsilon)^{-2}$ is very large and the negative noise contribution gets amplified, potentially
flattening or even moving the minimum away from~$\theta$. Choosing $\varepsilon \gg 1$ very large: If $\varepsilon$ dominates~$M$ and~$m$ then for a minimum of~$ \liminf_n J_n$ one only needs a stronger signal than noise, i.e.~$\Delta_{\text{between}} > \Delta_{\text{within}}$. The problem is that the contrast of the cusp scales like~$\frac{1}{\varepsilon}$, making the objective function nearly flat around~$\theta$ and therefore the QML estimator sensitive to noise. The best performance is expected at an intermediate ``sweet spot'' where
$\varepsilon$ is large enough to remove the small-eigenvalue bottleneck, but still small enough that between-regime separation produces a visibly
stronger contrast than within-regime heterogeneity.  
\end{remark}

\section{QML-breakpoint detection in factor models}\label{sec:breakpoint_detection_in_factor_models}

In this section we apply the abstract framework of Section~\ref{sec:abstract_QML} to high-dimensional data. The data is generated by a factor model whose loadings switch at the breakpoint (Definition~\ref{def:pervasive_factor_model} below). The detection strategy consists of a single preprocessing step:  we take the leading $q$ PCA scores of the $p$-dimensional observations and rescale them by $p^{-1/2}$, obtaining a $q$-dimensional signal series to which the framework of Section~\ref{sec:abstract_QML} applies.

The key structural hypothesis is that the factors are \emph{pervasive} --- each moves a macroscopic proportion of the coordinates, so the signal eigenvalues grow like the dimension~$p$ (see Remark~\ref{rem:pervasive_scale} below) --- while the error term only needs to be of lower order~$o(p)$ and may otherwise be completely general; the order-$p$ spectral gap then lets the Davis--Kahan inequality align the estimated projection with the pervasive directions. The main results are Proposition~\ref{prop:projected_slln_factor}, which verifies the assumptions of the abstract framework for the projected series --- vanishing within-regime noise, positive between-regime contrast --- and Corollary~\ref{thm:qml_consistency_factor}, which concludes consistency of the untrimmed QML breakpoint estimator for every ridge $\varepsilon>0$. Throughout we work in the high-dimensional high-sample-size (HDHSS) regime: $p=p_n\to\infty$ jointly with~$n$, while the numbers of factors stay fixed.\\

Let us start with the definition of a factor model and what it means to have a regime change at the breakpoint~$\tau$.

\begin{defn}[Regime-switching pervasive factor model]\label{def:pervasive_factor_model}
Let~$p=p_n$ denote the dimension of the observations, allowed to depend on the number~$n$ of observations. Let $X=(x_t)_{1\le t\le n}\in \R^{p\times n}$ denote the observed data. We say that $X$ follows a regime-switching pervasive factor model with breakpoint~$\tau = \lfloor \theta n\rfloor$, for some fixed~$\theta \in (0,1)$, if
\begin{equation}\label{eq:factor_model_controlled_error}
x_t=
\begin{cases}
\Lambda_{\le} f_t + e_t, & 1\le t\le \tau,\\[1mm]
\Lambda_{>} f_t + e_t, & \tau < t\le n,
\end{cases}
\end{equation}
where:
\begin{itemize}
    \item $\Lambda_{\le}=\Lambda_{\le,n}\in \R^{p\times q_{\le}}$ and $\Lambda_{>}=\Lambda_{>,n}\in \R^{p\times q_{>}}$ are deterministic loading matrices, with $q_{\le},q_{>}$ fixed, not depending on~$n$;
    \item $f_t$ are the factors, with $f_t \in \R^{q_\le}$ for $t\le\tau$ and $f_t\in\R^{q_>}$ for $t>\tau$;
    \item $e_t\in \R^p$ is a general error vector absorbing all non-pervasive, lower-order structure, e.g.~weak factors, idiosyncratic noise, dependence, or correlation with the factors.
\end{itemize}
We suppress the dependence of $p, \Lambda_\bullet$, $f_t$, $e_t$ on~$n$ whenever harmless. For convenience we write $s_t:=\Lambda_{\le} f_t$ for $t\le\tau$ and $s_t:=\Lambda_{>} f_t$ for $t>\tau$, so that $x_t=s_t+e_t$. For a macroscopic interval $I=I(a,b)$ we set
\[
S_{a:b}:=\frac1{|I|}\sum_{t\in I}s_ts_t^{\top},
\qquad
E_{a:b}:=\frac1{|I|}\sum_{t\in I}e_te_t^{\top}.
\]

\end{defn}

Classical results on QML-based breakpoint estimation in factor models --- most prominently Duan, Bai, and Han~\cite{DuanBaiHan2023} --- concern the unregularized objective~($\varepsilon=0$) under the structural assumptions on the idiosyncratic error~$e_t$ recalled in the introduction, which force a quantitative law of large numbers on the block second moments. In contrast (see Corollary~\ref{thm:qml_consistency_factor} below), we establish consistency allowing~$e_t$ to be \emph{completely general}, requiring only that its blockwise energy is negligible at the pervasive scale (see (F4) below). \\

We now describe the signal time series~$(y_t)_{1 \leq t \leq n}$ used for the detection of the breakpoint. As we cannot rely on domain knowledge for generic data, we define the signal space in a data-driven way (see also Remark~\ref{rem:data_driven_signal_space}). We fix a \emph{working rank} $q$, theoretically the number of pervasive directions (see Assumption~(F1) below), chosen in practice by thresholding the singular values (cf.\ Remark~\ref{rem:estimating_rank}). Let $\widehat U \in \R^{p\times q}$ denote the matrix whose columns are the leading $q$ eigenvectors of the empirical second moment matrix of the full dataset
\[
   \frac1n\sum_{t=1}^n x_t x_t^{\top}\in \R^{p\times p}.
\]
We define the  \emph{$p^{-1/2}$-rescaled PCA-score time series} by
\begin{equation}\label{eq:projected_signal_factor_section}
y_t:= \frac1{\sqrt p}\,\widehat U^{\top}x_t\in \R^q,\qquad 1\le t\le n.
\end{equation}
 The entries of~$y_t$ are the coordinates of the orthogonal projection~$\widehat U\widehat U^{\top}x_t$ in the basis~$\widehat U$.

For a macroscopic interval $I=I(a,b)$ the empirical second moment of  the rescaled PCA scores~$y_{t}$ on~$I$ is then defined as in Definition~\ref{def_singal_and_noise} as
\begin{equation}\label{eq:projected_block_moment_factor}
  \widehat \Sigma_{a:b} :=\frac1{|I|}\sum_{t\in I}y_ty_t^{\top} \in\R^{q\times q}.
\end{equation}
The normalization by $1/p$ makes $\widehat \Sigma_{a:b}$ an order-one $q\times q$ covariance (Remark~\ref{rem:pervasive_scale}).  The PCA step serves two purposes:
\begin{itemize}
    \item \emph{Dimension reduction:} going from~$p$ dimensions to the fixed dimension~$q$ makes the asymptotic analysis feasible even as~$p=p_n\to\infty$;
    \item \emph{Denoising:}  replacing $x_t$ by its orthogonal projection $\widehat U\widehat U^{\top}x_t$ onto the leading left-singular eigenspace of $X$ is a standard denoising device; the scores $\widehat U^{\top}x_t$ describe the data in its natural feature coordinates; this is the smoothing displayed in Figure~\ref{fig:gfc_smoothness}.
\end{itemize}

We now collect the asymptotic conditions needed to detect the breakpoint: pervasiveness, a factor SLLN, the regime change, and an asymptotic control on the error.  Stacking the two regimes, the data of a regime-switching model is itself a factor model with loadings~$[\,\Lambda_{\le}, \Lambda_{>}\,]$,
\[
X=[\,\Lambda_{\le}, \Lambda_{>}\,]\begin{bmatrix}F_{\le} & 0\\ 0 & F_{>}\end{bmatrix}+[\,E_{\le}, E_{>}\,],
\]
which is why the pervasiveness condition~(F1) below is formulated for the combined loadings.

\begin{assumption}\label{ass:assumptions_factor}
In addition to the model of Definition~\ref{def:pervasive_factor_model}, we assume:
\begin{itemize}
\item[(F1)] {\bf Pervasive loadings.}
   The combined normalized loading Gram matrix converges to a positive-semidefinite limit~$\Sigma_\Lambda \succeq 0$, i.e.
   \[
   \lim_{p \to \infty}\tfrac1p[\,\Lambda_{\le}\ \Lambda_{>}\,]^{\top}[\,\Lambda_{\le}\ \Lambda_{>}\,] = \Sigma_\Lambda \in \mathbb{R}^{(q_\le+q_>) \times (q_\le+q_>)}.
   \]
We define $q:=\operatorname{rank}(\Sigma_\Lambda) \leq q_\le+q_>$ as the pervasive rank. We have a strict inequality  exactly when the two regimes share at least one pervasive direction, i.e.\ a factor unchanged across the breakpoint.

\item[(F2)] {\bf Factor SLLN.}
   There exist positive definite matrices~$\Sigma_{F_{\le}} \succ 0$ and~$\Sigma_{F_{>}} \succ 0$ such that for every macroscopic interval $I(a,b)$ contained in the first regime and $I(c,d)$ contained in the second regime 
   \begin{align}
     \lim_{n \to \infty} \tfrac1{|I(a,b)|}\sum_{t\in I(a,b)} f_tf_t^{\top} = \Sigma_{F_{\le}} \quad \mbox{and} \quad   \lim_{n \to \infty} \tfrac1{|I(c,d)|}\sum_{t\in I(c,d)} f_tf_t^{\top} = \Sigma_{F_{>}}.
   \end{align}

\item[(F3)] {\bf Regime change in the leading part.}
  For $\bullet\in\{\le,>\}$ write $\Sigma_\bullet^{\rm sig}:=\Lambda_\bullet\Sigma_{F_\bullet}\Lambda_\bullet^{\top}$. We assume that the two regimes stay asymptotically separated on the pervasive scale, i.e.~
  \begin{align}\label{equ:regime_separation}
   \liminf_{n\to\infty}\ \frac{1}{p_n}\bigl\| \Sigma_\le^{\rm sig}-\Sigma_>^{\rm sig} \bigr\|_F>0 .
  \end{align}

\item[(F4)] {\bf Negligible error on the pervasive scale.}
   For every fixed macroscopic interval~$I(a,b)$, $ 0 \leq a < b \leq 1$, 
   \[
   \lim_{n \to \infty} \frac{1}{p_n} \bigl\| E_{a:b}\bigr\| = 0.
   \]
\end{itemize}
\end{assumption}

Let us explain the intuition of the pervasive order-$p$ scaling of the eigenvalues in Assumption (F1).
\begin{remark}[Pervasive scaling in financial factor models]\label{rem:pervasive_scale}
To illustrate pervasive scaling in financial factor models, let us consider the simplest model, where there is only a single market factor with loading vector $\Lambda=(1,\dots,1)^{\top}\in\R^{p\times1}$. This means that assets move as the market factor plus some idiosyncratic noise. Then
\[
\|\Lambda\|=\sup_{|f|=1}|\Lambda f|=\sqrt p,\qquad \|\Lambda\|^2=p,
\]
and the covariance contribution $\Lambda\Lambda^{\top}$ is the all-ones matrix, which has a single non-zero eigenvalue equal to~$p$. Thus a pervasive factor contributes an eigenvalue of order~$p$. It is important that all pervasive factors are caught when choosing the dimension~$q$, as we need in the proof of Proposition~\ref{prop:projected_slln_factor} that the associated~$q$-eigengap is also of order~$p$. 
\end{remark}

Let us now turn to the main result of this section.

\begin{proposition}\label{prop:projected_slln_factor}
Consider a regime-switching pervasive factor model~\eqref{eq:factor_model_controlled_error} satisfying \textup{(F1)--(F4)}, and let $(y_t)$ be the  $p^{-1/2}$-rescaled PCA-score series~\eqref{eq:projected_signal_factor_section}, with  block second moments $\widehat\Sigma_{a:b}$ given by~\eqref{eq:projected_block_moment_factor}. Then:
\begin{enumerate}[label=\textup{(\roman*)}]
  \item Assumption~\textup{(A1)} holds; that is, the
  break-anchored spectral ceiling in~\eqref{equ:def_M_m} satisfies $M<\infty$;
  \item the within- and between-regime separations of Section~\ref{sec:abstract_QML} satisfy
  \[
  \Delta_{\textup{within}}=0
  \quad\text{and}\quad
  \Delta_{\textup{between}}=  \liminf_{n\to\infty}\ \frac{1}{p_n}\bigl\| \Sigma_\le^{\rm sig}-\Sigma_>^{\rm sig}\bigr\|_F ,
\]
and therefore, together with~\textup{(F3)}, Assumption~\textup{(A4)} holds for every $\varepsilon>0$.
\end{enumerate}
\end{proposition}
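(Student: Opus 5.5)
The plan is to reduce everything to the population signal matrices $\Sigma_\bullet^{\rm sig}/p$, and to show that the estimated projection $\widehat U\widehat U^{\top}$ asymptotically captures all of their pervasive part. Let
\[
L:=[\,\Lambda_{\le}\ \Lambda_{>}\,]/\sqrt p, \qquad W:=\mathrm{diag}\bigl(\theta\Sigma_{F_\le},(1-\theta)\Sigma_{F_>}\bigr)\succ 0 .
\]

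\emph{Step 1 (pooled moment).} Write $\frac1{pn}\sum_t x_tx_t^{\top}=\frac1p S_{0:1}+\frac1p(C_{0:1}+C_{0:1}^{\top})+\frac1pE_{0:1}$, with cross term $C_{a:b}=\frac1{|I|}\sum_{t\in I}s_te_t^{\top}$. The block-wise Cauchy--Schwarz bound $\|C_{a:b}\|\le\|S_{a:b}\|^{1/2}\|E_{a:b}\|^{1/2}$ is the device used for every cross term. We have $\frac1p\|S_{a:b}\|=O(1)$ by (F1) and (F2), and $\frac1p\|E_{a:b}\|=o(1)$ by (F4). Together with (F2), this gives
\[
\Bigl\|\tfrac1{pn}\textstyle\sum_t x_tx_t^{\top}-LWL^{\top}\Bigr\|\to0 .
\]
The nonzero eigenvalues of $LWL^{\top}$ coincide with those of $W^{1/2}L^{\top}LW^{1/2}\to W^{1/2}\Sigma_\Lambda W^{1/2}$, which has rank exactly $q$. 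Hence for large $n$ the $q$-th eigenvalue of $LWL^{\top}$ is bounded below by some $c>0$, while the $(q+1)$-th tends to $0$.

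\emph{Step 2 (Davis--Kahan).} Let $P_n$ be the projector onto the top-$q$ eigenspace of $LWL^{\top}$. The order-one gap from Step 1 and the Davis--Kahan $\sin\theta$ theorem give $\|\widehat U\widehat U^{\top}-P_n\|\to0$. Moreover, the part of $L$ outside $\mathrm{range}(P_n)$ is asymptotically negligible: since $W\succ0$, we have $\|(I-P_n)L\|^2\lesssim\|(I-P_n)LWL^{\top}(I-P_n)\|\to0$. Consequently $\widehat U\widehat U^{\top}L=L+o(1)$ in operator norm.

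\emph{Step 3 (block moments).} For a block $I(a,b)$ inside the first regime,
\[
\widehat\Sigma_{a:b}=\widehat U^{\top}\Bigl(\tfrac1pS_{a:b}+\tfrac1p(C_{a:b}+C_{a:b}^{\top})+\tfrac1pE_{a:b}\Bigr)\widehat U ,
\]
and the last two contributions are $o(1)$ by Step 1. Using (F2) we get $\widehat\Sigma_{a:b}=\widehat U^{\top}D_\le\widehat U+o(1)$ with $D_\le:=\Sigma_\le^{\rm sig}/p$, and analogously $\widehat\Sigma_{c:d}=\widehat U^{\top}D_>\widehat U+o(1)$ in the second regime. Break-anchored blocks lie in one regime, so $M\le\limsup_n\max_\bullet\|D_\bullet\|<\infty$, which is (i).

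Any two same-regime blocks share the same leading term, so their difference is $o(1)$ and $\Delta_{\rm within}=0$. For two blocks in different regimes the difference is $\widehat U^{\top}(D_\le-D_>)\widehat U+o(1)$. Since $\widehat U$ has orthonormal columns, $\|\widehat U^{\top}A\widehat U\|_F=\|\widehat U\widehat U^{\top}A\widehat U\widehat U^{\top}\|_F$. By Step 2, $\widehat U\widehat U^{\top}(D_\le-D_>)\widehat U\widehat U^{\top}=D_\le-D_>+o(1)$, because $D_\bullet$ is $L$-sandwiched with bounded middle factor, and operator-norm $o(1)$ upgrades to Frobenius $o(1)$ since the rank is at most $2(q_\le+q_>)$. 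This holds uniformly for every between-regime pair of blocks, so the infimum in $\Delta_{\rm between}$ equals $\liminf_n\frac1p\|\Sigma_\le^{\rm sig}-\Sigma_>^{\rm sig}\|_F$, which is positive by (F3).

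\emph{Conclusion and main obstacle.} With $\varepsilon>0$ we have $m+\varepsilon>0$ and $M<\infty$. The right-hand side of (A4) is then $0<\Delta_{\rm between}$, so (A4) holds. I expect the main obstacle to be Step 2, and in particular the rank bookkeeping there. The combined loadings may have rank larger than $q$ at finite $p$, with small but nonzero extra directions, and the eigenbasis $\widehat U$ is only determined up to rotation. Both issues are handled by working with projectors and Frobenius norms (which are rotation-invariant) and with the bound $\|(I-P_n)L\|\to0$, rather than comparing eigenvectors directly.
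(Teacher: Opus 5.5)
Your proposal is correct and follows essentially the same route as the paper. Both arguments bound the cross terms with the operator Cauchy--Schwarz inequality, obtain an eigengap of order one for $\Lambda\bar D\Lambda^{\top}/p$ at level $q$ and feed it into Davis--Kahan, show that $(I-P_{\rm sig})\Lambda/\sqrt p$ is negligible, and then carry $\Sigma_\le^{\rm sig}-\Sigma_>^{\rm sig}$ through the projection. The differences are cosmetic: you bound the tail $(I-P_n)L$ in operator norm via $\lambda_{q+1}$ and pass to Frobenius norm by a rank count, whereas the paper works with the Frobenius tail $\sum_{j>q}\lambda_j$ directly.

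One small wording point: the $o(1)$ terms are not uniform over pairs of blocks, and you do not need them to be, since $\Delta_{\rm between}$ takes $\liminf_n$ pair by pair before the infimum.
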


The proof of Proposition~\ref{prop:projected_slln_factor} is given in Section~\ref{sec:proof_qml_consistency_factor}. A direct combination with Theorem~\ref{thm:QML-robust} yields consistency of the QML-breakpoint estimator:

\begin{corollary}[HDHSS consistency of QML-breakpoint estimation in pervasive factor models]\label{thm:qml_consistency_factor}

We consider a regime-switching pervasive factor model~\eqref{eq:factor_model_controlled_error} satisfying \textup{(F1)--(F4)}. Then, for every $\varepsilon>0$, every untrimmed regularized QML breakpoint estimator $\widehat\theta_n\in\arg\min_{\nu\in\mathcal D_n}J_n(\nu)$ satisfies $\widehat\theta_n \to \theta$.

\end{corollary}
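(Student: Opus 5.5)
The plan is a direct combination of Proposition~\ref{prop:projected_slln_factor} with part~(ii) of Theorem~\ref{thm:QML-robust}, taken in its untrimmed form $\varepsilon>0$.

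Fix $\varepsilon>0$. Proposition~\ref{prop:projected_slln_factor}(i) gives Assumption~\ref{ass:bounded_second_moment}, i.e.\ $M<\infty$, for the rescaled PCA-score series $(y_t)$. Part~(ii) gives $\Delta_{\rm within}=0$, and by~(F3) it gives $\Delta_{\rm between}>0$. Since $m\ge 0$, we have $m+\varepsilon\ge\varepsilon>0$, so the conditioning factor $(M+\varepsilon)/(m+\varepsilon)$ is finite. Hence the right-hand side of~(A4) is $0$, while the left-hand side is strictly positive, and~(A4) holds. The key observation is that positive definiteness~(PD) is not needed, because $\varepsilon>0$. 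This matters: if a factor vanishes across the break, $m$ may be zero, and the ridge is exactly what absorbs this.

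Theorem~\ref{thm:QML-robust}(ii) then applies verbatim to the signal series $(y_t)$. Its conclusion is that every untrimmed minimizer $\widehat\theta_n\in\arg\min_{\nu\in\mathcal D_n}J_n(\nu)$ converges to $\theta$.

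One point needs checking. The signal separation space $\widehat U$ is data-driven and depends on $n$ and $p_n$, so the series $y_t$ is a triangular array rather than a window of a fixed latent sequence as in Assumption~\ref{ass:generating_data}. As stated after that assumption, the abstract results use only the blockwise empirical limits entering $M$, $m$, $\Delta_{\rm within}$ and $\Delta_{\rm between}$, so they transfer to triangular arrays unchanged. Definition~\ref{def:signal_separation_space} also explicitly allows $U_q$ to depend on the data. The remaining inputs must be computed for this same array, and that is precisely what Proposition~\ref{prop:projected_slln_factor} delivers.

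I do not expect a genuine obstacle in the corollary itself. All the substantive work — the Davis--Kahan alignment of $\widehat U$ with the pervasive directions, using the order-$p$ eigengap from~(F1), and the negligibility of the errors from~(F4) — sits inside Proposition~\ref{prop:projected_slln_factor}. The only care needed in the corollary is to invoke the $\varepsilon>0$ branch of Theorem~\ref{thm:QML-robust}, so that neither~(PD) nor trimming is required.
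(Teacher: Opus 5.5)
Your proposal is correct and is essentially the paper's argument: Proposition~\ref{prop:projected_slln_factor} supplies (A1), $\Delta_{\mathrm{within}}=0$ and, via (F3), $\Delta_{\mathrm{between}}>0$, so (A4) holds for every $\varepsilon>0$. The $\varepsilon>0$ branch of Theorem~\ref{thm:QML-robust}(ii) then gives untrimmed consistency without (PD); your side remarks on $m+\varepsilon>0$ and on the data-driven triangular-array structure agree with the paper's comment following Assumption~\ref{ass:generating_data}.
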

\begin{remark}[Beyond infinite SNR]\label{rem:robustness_radius}
Assumption~(F4) makes the error budget vanish at the pervasive scale, and Proposition~\ref{prop:projected_slln_factor} accordingly lands in the infinite signal-to-noise regime $\Delta_{\mathrm{within}}=0$ of the abstract framework. Nothing in the proof is tied to this extreme case: all estimates are quantitative, and a sufficiently small non-vanishing error budget yields a positive $\Delta_{\mathrm{within}}$ and a reduced $\Delta_{\mathrm{between}}$ that still satisfy the finite signal-to-noise condition~\textup{(A4)}, so Theorem~\ref{thm:QML-robust} continues to give consistency; we do not track the explicit constants.
\end{remark}

\begin{remark}[Estimating the working rank]\label{rem:estimating_rank}
In practice the pervasive rank $q$ is unknown and is estimated from the data, for example by thresholding the singular values of the data matrix. The message is: overestimation is harmless, underestimation is not. If the estimated rank eventually stabilizes at a value $\ge q$, the projected series still contains the consistent $q$-dimensional pervasive component, while the extra directions contribute only $o(1)$ to the normalized block second moments, by \textup{(F4)} and the Davis--Kahan estimate; the proof of Proposition~\ref{prop:projected_slln_factor} goes through unchanged and the regularized QML breakpoint estimator (any $\varepsilon>0$) remains consistent. If the rank is underestimated, the projection can collapse the two regimes onto each other, and the break may become undetectable.
\end{remark}

\section{Proofs}\label{sec:proofs}

The conceptual motivation for the arguments, including the two competing
Jensen gaps and the role of the ridge parameter, was given in
Section~\ref{subsec:heuristic-eps}.  Here we supply the technical details in
two parts.  Subsection~\ref{sec:proof_auxiliary_results} collects the
auxiliary tools: Lemmas~\ref{lem:opnorm_Sigma_ab}--\ref{lem:Lipschitz_J}
provide the boundedness, continuity, and endpoint control needed to pass from
pointwise estimates to minimizers of~$J_n$, while
Lemma~\ref{lem:jensen_gap_logdet} quantifies the curvature of~$\log\det$.
Subsection~\ref{sec:proof_main_results} proves the main results:
Theorem~\ref{thm:classical_QML_consistency} gives classical SLLN-based
consistency, Theorem~\ref{thm:QML-robust} gives finite-signal-to-noise
consistency, Proposition~\ref{prop:sharp-A4-fixed-theta} establishes
sharpness of the global threshold, and
Proposition~\ref{prop:projected_slln_factor} verifies the abstract assumptions
for the pervasive factor model.  Combining the last proposition with
Theorem~\ref{thm:QML-robust} yields the factor-model consistency result,
Corollary~\ref{thm:qml_consistency_factor}.

\subsection{Auxiliary results}\label{sec:proof_auxiliary_results}

\begin{lem}[Block additivity of empirical second moments]\label{lem:block_additivity}
Let $(z_t)_{1\le t\le n}$ be vectors in $\mathbb R^d$ and, for a nonempty index set $I\subset\{1,\dots,n\}$, set $\widehat\Sigma_I:=\frac1{|I|}\sum_{t\in I}z_tz_t^{\top}$. If $I,J$ are nonempty and disjoint, then
\begin{align}\label{eq:block_additivity}
\widehat\Sigma_{I\cup J}
=\frac{|I|}{|I|+|J|}\,\widehat\Sigma_I+\frac{|J|}{|I|+|J|}\,\widehat\Sigma_J ,
\end{align}
and consequently
\begin{align}\label{eq:block_additivity_solved}
\widehat\Sigma_J
=\frac{|I|+|J|}{|J|}\,\widehat\Sigma_{I\cup J}-\frac{|I|}{|J|}\,\widehat\Sigma_I ,
\qquad
\bigl\|\widehat\Sigma_J-\widehat\Sigma_{I\cup J}\bigr\|
\le\frac{|I|}{|J|}\Bigl(\bigl\|\widehat\Sigma_{I\cup J}\bigr\|+\bigl\|\widehat\Sigma_I\bigr\|\Bigr).
\end{align}
\end{lem}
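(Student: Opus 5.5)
The identity~\eqref{eq:block_additivity} follows directly from the definition of $\widehat\Sigma_I$. The remaining two claims in~\eqref{eq:block_additivity_solved} then follow by solving for $\widehat\Sigma_J$ and applying the triangle inequality.

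\emph{The additivity identity.} Since $I$ and $J$ are disjoint, $|I\cup J|=|I|+|J|$. The sum over the union splits as
\[
\sum_{t\in I\cup J}z_tz_t^{\top}=\sum_{t\in I}z_tz_t^{\top}+\sum_{t\in J}z_tz_t^{\top}=|I|\,\widehat\Sigma_I+|J|\,\widehat\Sigma_J .
\]
Dividing by $|I|+|J|$ gives~\eqref{eq:block_additivity}.

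\emph{Solving for $\widehat\Sigma_J$.} Multiply~\eqref{eq:block_additivity} by $(|I|+|J|)/|J|$, which is allowed because $J$ is nonempty. Then move the $\widehat\Sigma_I$ term to the other side. This is exactly the first formula in~\eqref{eq:block_additivity_solved}.

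\emph{The norm bound.} Subtract $\widehat\Sigma_{I\cup J}$ from the solved formula. The coefficient of $\widehat\Sigma_{I\cup J}$ becomes $\frac{|I|+|J|}{|J|}-1=\frac{|I|}{|J|}$, so
\[
\widehat\Sigma_J-\widehat\Sigma_{I\cup J}=\frac{|I|}{|J|}\bigl(\widehat\Sigma_{I\cup J}-\widehat\Sigma_I\bigr).
\]
The triangle inequality for the operator norm then yields the stated estimate.

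There is no real obstacle here. The only points to check are the disjointness, which makes the cardinalities add, and the nonemptiness of $J$, which permits division by $|J|$. The bound holds equally for the Frobenius norm, since only the triangle inequality is used.
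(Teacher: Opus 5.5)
Your proposal is correct and follows the paper's proof essentially step for step: you split the sum over the disjoint union, solve for $\widehat\Sigma_J$, and rewrite $\widehat\Sigma_J-\widehat\Sigma_{I\cup J}=\frac{|I|}{|J|}\bigl(\widehat\Sigma_{I\cup J}-\widehat\Sigma_I\bigr)$ before applying the triangle inequality. Your added remarks are also accurate: you point out where disjointness and nonemptiness of $J$ are used, and that the bound holds in any norm.
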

\begin{proof}
Multiplying out, $(|I|+|J|)\,\widehat\Sigma_{I\cup J}=\sum_{t\in I\cup J}z_tz_t^{\top}=|I|\,\widehat\Sigma_I+|J|\,\widehat\Sigma_J$, which is~\eqref{eq:block_additivity}. Solving for $\widehat\Sigma_J$ gives the identity in~\eqref{eq:block_additivity_solved}; subtracting $\widehat\Sigma_{I\cup J}$ from it yields $\widehat\Sigma_J-\widehat\Sigma_{I\cup J}=\frac{|I|}{|J|}\bigl(\widehat\Sigma_{I\cup J}-\widehat\Sigma_I\bigr)$, and the triangle inequality gives the norm bound.
\end{proof}

\begin{lem}[Bounds on empirical second moments]
\label{lem:opnorm_Sigma_ab}
 If $0\le a<b\le 1$, then
\begin{equation}\label{eq:opnorm_left_block}
\limsup_{n \to \infty} \|\widehat\Sigma_{a:b}\| \leq \Delta_{\text{within}} + M.
\end{equation}
\end{lem}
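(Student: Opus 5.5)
We compare $\widehat\Sigma_{a:b}$ with a break-anchored block on the same side, whose norm is controlled by $M$, and pay for the comparison with $\Delta_{\mathrm{within}}$. Passing from Frobenius to operator norm is free, since $\|A\|\le\|A\|_F$. Throughout we use the triangle inequality
\[
\|\widehat\Sigma_{a:b}\|\le \|\widehat\Sigma_{a:b}-\widehat\Sigma_{c:d}\|_F+\|\widehat\Sigma_{c:d}\|
\]
together with $\limsup(x_n+y_n)\le\limsup x_n+\limsup y_n$.

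\emph{Case 1: $b\le\theta$.} Take the break-anchored block $I(a,\theta)\in\mathcal I_\theta^{\rm br}$. Both $[a,b]$ and $[a,\theta]$ lie in $[0,\theta]$, so by definition~\eqref{eq:Delta-within}
\[
\limsup_n\|\widehat\Sigma_{a:b}-\widehat\Sigma_{a:\theta}\|_F\le\Delta_{\mathrm{within}},
\]
and $\limsup_n\|\widehat\Sigma_{a:\theta}\|\le M$. The triangle inequality then gives the claim.

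\emph{Case 2: $a\ge\theta$.} This is symmetric, using the block $I(\theta,b)$.

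\emph{Case 3: $a<\theta<b$.} Block additivity (Lemma~\ref{lem:block_additivity}) writes $\widehat\Sigma_{a:b}$ as a convex combination of $\widehat\Sigma_{a:\theta}$ and $\widehat\Sigma_{\theta:b}$. Both are break-anchored, so here we get directly
\[
\limsup_n\|\widehat\Sigma_{a:b}\|\le M\le M+\Delta_{\mathrm{within}},
\]
by convexity of the norm.

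A few integer-part details need checking. The block $I(a,\theta)$ ends at $\lfloor\theta n\rfloor$, so it is contained in $\{1,\dots,\lfloor\theta n\rfloor\}$ as the definitions require. The convex weights in Case 3 are $|I(a,\theta)|/|I(a,b)|$ and its complement, which are legitimate for every $n$. The only point needing care is the edge case $b=\theta$ in Case 1, where the anchored block coincides with $I(a,b)$ and the bound is immediate. No step is a genuine obstacle; the main thing to verify is that every auxiliary block chosen really belongs to $\mathcal I_\theta^{\rm br}$.
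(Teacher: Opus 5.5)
Your proof is correct and is essentially the paper's argument: bound $\|\widehat\Sigma_{a:b}\|$ via the triangle inequality and $\|\cdot\|\le\|\cdot\|_F$ against a break-anchored block in the same regime, and handle crossing intervals by block additivity. The differences are cosmetic. You anchor at $I(a,\theta)$ (resp.\ $I(\theta,b)$) where the paper uses $I(0,\theta)$, and you observe that the crossing case already gives the sharper bound $M$. One small correction: the convex-weight decomposition in Case 3 is only valid once $n$ is large enough for both sub-blocks to be nonempty, not for every $n$, but that suffices for the $\limsup$.
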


\begin{proof}[Proof of Lemma~\ref{lem:opnorm_Sigma_ab}]
We first prove~\eqref{eq:opnorm_left_block} for~$0 \leq a  < b \leq \theta$ and for~$\theta \leq a  < b \leq 1$.
The general statement for~$0 \leq a < \theta < b \leq 1$ follows then from the triangle inequality and  Lemma~\ref{lem:block_additivity}, applied to $I(a,\theta)$ and $I(\theta,b)$:
\begin{align}
    \widehat\Sigma_{a:b} = \frac{\lfloor \theta n \rfloor - \lfloor a n \rfloor}{\lfloor b n \rfloor - \lfloor a n \rfloor} \widehat\Sigma_{a:\theta} +  \frac{ \lfloor b n \rfloor  - \lfloor \theta n \rfloor }{\lfloor b n \rfloor - \lfloor a n \rfloor} \widehat\Sigma_{\theta:b} .
\end{align}
Hence, let us only consider the case $0 \leq a  < b \leq \theta$. We observe that
\begin{align}
       \|  \widehat\Sigma_{a:b} \| \leq \| \widehat\Sigma_{a:b}  -  \widehat\Sigma_{0:\theta}  \| + \|  \widehat\Sigma_{0:\theta}  \|,  
\end{align}
from which the desired inequality~\eqref{eq:opnorm_left_block} follows by unwinding the involved definitions.
\end{proof}

\begin{lem}[Modulus of continuity for empirical second moments]
\label{lem:modcont_cov}
For all  $0 < \nu_1 < \nu_2 < 1$,
\begin{align}
\label{eq:modcont_left}
\limsup_{n \to \infty}  \|\widehat\Sigma_{0:\nu_2 }-\widehat\Sigma_{0: \nu_1 } \|  \leq \frac{2}{\nu_2}  ( \Delta_{\text{within}} + M)  |\nu_2-\nu_1|,
\end{align}
and 
\begin{align}
\label{eq:modcont_right}
 \limsup_{n \to \infty} \|\widehat\Sigma_{\nu_2:1}-\widehat\Sigma_{\nu_1 :1} \| \leq \frac{2}{(1 - \nu_1)}  ( \Delta_{\text{within}} + M) |\nu_2-\nu_1|.
\end{align}

\end{lem}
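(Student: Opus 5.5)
We prove \eqref{eq:modcont_left} with Lemma~\ref{lem:block_additivity}, taking $I=I(0,\nu_1)$ and $J=I(\nu_1,\nu_2)$, so that $I\cup J=I(0,\nu_2)$. Solving \eqref{eq:block_additivity} for the difference gives the exact identity
\[
\widehat\Sigma_{0:\nu_2}-\widehat\Sigma_{0:\nu_1}
=\frac{|J|}{|I|+|J|}\bigl(\widehat\Sigma_{\nu_1:\nu_2}-\widehat\Sigma_{0:\nu_1}\bigr),
\qquad \frac{|J|}{|I|+|J|}=\frac{\lfloor\nu_2 n\rfloor-\lfloor\nu_1 n\rfloor}{\lfloor\nu_2 n\rfloor}\longrightarrow\frac{\nu_2-\nu_1}{\nu_2}.
\]
The triangle inequality then bounds the norm of the difference by $\frac{|J|}{|I|+|J|}\bigl(\|\widehat\Sigma_{\nu_1:\nu_2}\|+\|\widehat\Sigma_{0:\nu_1}\|\bigr)$. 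Both blocks are fixed macroscopic intervals with $0\le a<b\le1$, so Lemma~\ref{lem:opnorm_Sigma_ab} gives $\limsup_n$ of each norm at most $\Delta_{\mathrm{within}}+M$. Since $\limsup$ is subadditive and the prefactor converges, we get $\limsup_n\|\widehat\Sigma_{0:\nu_2}-\widehat\Sigma_{0:\nu_1}\|\le \frac{\nu_2-\nu_1}{\nu_2}\cdot 2(\Delta_{\mathrm{within}}+M)$. This is \eqref{eq:modcont_left}.

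For \eqref{eq:modcont_right} we argue symmetrically with $I=I(\nu_1,\nu_2)$ and $J=I(\nu_2,1)$, so that $I\cup J=I(\nu_1,1)$. The identity becomes $\widehat\Sigma_{\nu_2:1}-\widehat\Sigma_{\nu_1:1}=\frac{|I|}{|I|+|J|}\bigl(\widehat\Sigma_{\nu_2:1}-\widehat\Sigma_{\nu_1:\nu_2}\bigr)$. The prefactor $\frac{\lfloor\nu_2 n\rfloor-\lfloor\nu_1 n\rfloor}{n-\lfloor\nu_1 n\rfloor}$ converges to $\frac{\nu_2-\nu_1}{1-\nu_1}$. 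Lemma~\ref{lem:opnorm_Sigma_ab} bounds each of the two norms in the same way, which gives the constant $\frac{2}{1-\nu_1}$.

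The only point needing care is the floor convention. Because $\nu_1<\nu_2$ are fixed and lie in $(0,1)$, all the blocks are nonempty for large $n$, and the ratios of cardinalities converge to the stated fractions. Lemma~\ref{lem:opnorm_Sigma_ab} applies to blocks straddling $\theta$ as well, so we do not need to split into cases according to the position of $\theta$. No real obstacle is expected.
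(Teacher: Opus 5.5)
Your proof is correct and follows the paper's argument: the same block-additivity identity $\widehat\Sigma_{0:\nu_2}-\widehat\Sigma_{0:\nu_1}=\frac{k_2-k_1}{k_2}\bigl(\widehat\Sigma_{\nu_1:\nu_2}-\widehat\Sigma_{0:\nu_1}\bigr)$, then the triangle inequality, then Lemma~\ref{lem:opnorm_Sigma_ab} on each block. Your explicit treatment of the right-hand estimate, with $I=I(\nu_1,\nu_2)$ and $J=I(\nu_2,1)$, is exactly the symmetric case the paper leaves to the reader, and your prefactor $\frac{\nu_2-\nu_1}{1-\nu_1}$ is correct.
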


\begin{proof}[Proof of Lemma~\ref{lem:modcont_cov}]
We only consider~$ \|\widehat\Sigma_{0:\nu_2 }-\widehat\Sigma_{0: \nu_1 } \|$ as the term $\|\widehat\Sigma_{\nu_2:1}-\widehat\Sigma_{\nu_1 :1} \|$ can be estimated in the same way.
Set $k_i:=\lfloor\nu_i n\rfloor$, $i=1,2$. For all large enough~$n$,
$1\leq k_1<k_2\leq n-1$, and  \eqref{eq:block_additivity} gives
\[
\widehat\Sigma_{0: \nu_2}
=
\frac{k_1}{k_2}\widehat\Sigma_{0: \nu_1}
+\frac{k_2-k_1}{k_2}\widehat\Sigma_{\nu_1:\nu_2}.
\]
Hence
\[
\widehat\Sigma_{0: \nu_2}-\widehat\Sigma_{0: \nu_1}
=
\frac{k_2-k_1}{k_2}
\Big(\widehat\Sigma_{\nu_1:\nu_2}-\widehat\Sigma_{0: \nu_1}\Big).
\]
Taking operator norms and using the triangle inequality gives
\begin{align}
\big\|\widehat\Sigma_{0: \nu_2}-\widehat\Sigma_{0:  \nu_1}\big\|
& \leq
\frac{k_2-k_1}{k_2}\Big(\|\widehat\Sigma_{0: \nu_1}\|+\|\widehat\Sigma_{\nu_1:\nu_2}\|\Big).
\end{align}
Since $(k_2-k_1)/k_2\to(\nu_2-\nu_1)/\nu_2$, applying
Lemma~\ref{lem:opnorm_Sigma_ab} yields the desired estimate.
\end{proof}

\begin{lem}[Vanishing energy in short windows]\label{lem:boundary_energy}
Assume~\ref{ass:bounded_second_moment} and $\Delta_{\mathrm{within}}<\infty$. Then, for every fixed $\bar\nu\in[0,1]$ and $\rho\in(0,1)$, setting $a_\rho:=(\bar\nu-\rho)\vee0$ and $b_\rho:=(\bar\nu+\rho)\wedge1$,
\begin{align}\label{eq:local_energy}
\limsup_{n\to\infty}
\frac1n\sum_{t=\lfloor a_\rho n\rfloor+1}^{\lfloor b_\rho n\rfloor}\|y_t\|^2
\leq 2\rho\,q\,\big(M+\Delta_{\mathrm{within}}\big).
\end{align}
In particular, combining the windows at $\bar\nu=0$ and $\bar\nu=1$ and sending $\rho\downarrow0$,
\begin{align}\label{eq:boundary_energy}
\lim_{\rho\downarrow0}\limsup_{n\to\infty}
\frac1n\left(\sum_{t=1}^{\lfloor \rho n\rfloor}\|y_t\|^2
+\sum_{t=n-\lfloor \rho n\rfloor+1}^{n}\|y_t\|^2\right)=0.
\end{align}
Consequently, if $k_n,\ell_n\in\{0,\ldots,n\}$ satisfy
$k_n/n\to0$ and $\ell_n/n\to0$, then
\begin{align}\label{eq:boundary_energy_sequential}
\frac1n\left(
\sum_{t=1}^{k_n}\|y_t\|^2
+\sum_{t=n-\ell_n+1}^{n}\|y_t\|^2
\right)\longrightarrow0.
\end{align}
\end{lem}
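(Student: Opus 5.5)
The plan is to write $\|y_t\|^2=\operatorname{tr}(y_ty_t^{\top})$. Window sums then become traces of block second moments, and the bound $\operatorname{tr}(A)\le q\|A\|$ for $A\succeq0$ finishes the argument.

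\textbf{Step 1: the local bound.} Fix $\bar\nu$ and $\rho$. Set $I=I(a_\rho,b_\rho)$, which is a macroscopic interval because $b_\rho-a_\rho>0$. Then
\[
\frac1n\sum_{t\in I}\|y_t\|^2=\frac{|I|}{n}\operatorname{tr}\bigl(\widehat\Sigma_{a_\rho:b_\rho}\bigr)\le \frac{|I|}{n}\,q\,\bigl\|\widehat\Sigma_{a_\rho:b_\rho}\bigr\|.
\]
We have $|I|/n\to b_\rho-a_\rho\le 2\rho$. Lemma~\ref{lem:opnorm_Sigma_ab} applies to an arbitrary $0\le a<b\le1$, including intervals that straddle $\theta$, and gives $\limsup_n\|\widehat\Sigma_{a_\rho:b_\rho}\|\le M+\Delta_{\mathrm{within}}$. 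Taking the limsup of the product of these two factors yields \eqref{eq:local_energy}. Both factors are finite, by (A1) and the hypothesis $\Delta_{\mathrm{within}}<\infty$.

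\textbf{Step 2: the boundary windows.} Apply Step 1 with $\bar\nu=0$, where the window is $\{1,\dots,\lfloor\rho n\rfloor\}$, and with $\bar\nu=1$, where the window is $\{\lfloor(1-\rho)n\rfloor+1,\dots,n\}$. The tail set $\{n-\lfloor\rho n\rfloor+1,\dots,n\}$ is contained in the second window, because $n-\lfloor\rho n\rfloor\ge \lfloor(1-\rho)n\rfloor$. The summands are nonnegative, so enlarging the index set only increases the sum. The combined limsup is therefore at most $4\rho q(M+\Delta_{\mathrm{within}})$, which tends to $0$ as $\rho\downarrow0$. This is \eqref{eq:boundary_energy}.

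\textbf{Step 3: the sequential version.} Given $k_n/n\to0$ and $\ell_n/n\to0$, fix any $\rho\in(0,1)$. For large $n$ we have $k_n\le\lfloor\rho n\rfloor$ and $\ell_n\le\lfloor\rho n\rfloor$; this holds because $k_n\le \rho n/2$ eventually, and $\rho n/2\le\lfloor\rho n\rfloor$ once $\rho n\ge1$. By monotonicity of nonnegative sums, the limsup of the left side of \eqref{eq:boundary_energy_sequential} is then bounded by the limsup in \eqref{eq:boundary_energy} at this $\rho$. Since $\rho$ is arbitrary, the limsup is $0$, and the expression is nonnegative, which gives convergence to $0$.

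The only step needing care is Step 1, specifically confirming that Lemma~\ref{lem:opnorm_Sigma_ab} covers windows containing $\theta$ and the endpoint cases $a_\rho=0$ and $b_\rho=1$. Its proof handles straddling windows via block additivity, so this is routine. The remaining steps are bookkeeping with floors.
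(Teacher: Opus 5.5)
Your proposal is correct and follows essentially the same route as the paper. You write the window sum as $\frac{|I|}{n}\operatorname{tr}\widehat\Sigma_{a_\rho:b_\rho}\le\frac{|I|}{n}\,q\,\|\widehat\Sigma_{a_\rho:b_\rho}\|$ and invoke Lemma~\ref{lem:opnorm_Sigma_ab}; you then place the boundary windows and the moving windows inside fixed $\rho$-windows and send $\rho\downarrow0$. Your explicit checks that $n-\lfloor\rho n\rfloor\ge\lfloor(1-\rho)n\rfloor$ and that $k_n\le\lfloor\rho n\rfloor$ eventually simply spell out containments the paper asserts without detail.
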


\begin{proof}[Proof of Lemma~\ref{lem:boundary_energy}]
Fix $\bar\nu\in[0,1]$ and $\rho\in(0,1)$. By construction $0\leq a_\rho<b_\rho\leq1$ and $b_\rho-a_\rho\leq2\rho$. Hence
\[
\frac1n\sum_{t=\lfloor a_\rho n\rfloor+1}^{\lfloor b_\rho n\rfloor}\|y_t\|^2
=\frac{\lfloor b_\rho n\rfloor-\lfloor a_\rho n\rfloor}{n}\,
\Tr\widehat\Sigma_{a_\rho:b_\rho}
\leq
\Big(2\rho+\frac1n\Big)\,q\,\big\|\widehat\Sigma_{a_\rho:b_\rho}\big\|.
\]
 Lemma~\ref{lem:opnorm_Sigma_ab} yields
\[
\limsup_{n\to\infty}
\frac1n\sum_{t=\lfloor a_\rho n\rfloor+1}^{\lfloor b_\rho n\rfloor}\|y_t\|^2
\leq
2\rho\,q\,\big(M+\Delta_{\mathrm{within}}\big),
\]
which is~\eqref{eq:local_energy}. For~\eqref{eq:boundary_energy} it suffices to note that the two boundary index sets are contained in the windows of~\eqref{eq:local_energy} with $\bar\nu=0$ and $\bar\nu=1$, respectively, and to send $\rho\downarrow0$. Finally, for every fixed $\rho>0$, the moving boundary sets in the sequential conclusion above are contained in those same fixed $\rho$-windows for all large enough~$n$. Taking $\limsup_n$ and then sending $\rho\downarrow0$ proves that conclusion.

\end{proof}

\begin{lem}[Short endpoint windows under ridge regularization]\label{lem:boundary_logdet}
Assume~\ref{ass:bounded_second_moment}, $\Delta_{\mathrm{within}}<\infty$, and $\varepsilon>0$. If $\nu_n\in\mathcal D_n$ and $\nu_n\to0$, then $\nu_n\bigl|\log\det\big(\widehat\Sigma_{0:\nu_n}+\varepsilon I_q\big)\bigr|\longrightarrow0$; if $\nu_n\in\mathcal D_n$ and $\nu_n\to1$, then $(1-\nu_n)\bigl|\log\det\big(\widehat\Sigma_{\nu_n:1}+\varepsilon I_q\big)\bigr|\longrightarrow0$.
\end{lem}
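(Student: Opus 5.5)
We prove the claim for $\nu_n\to0$; the case $\nu_n\to1$ is identical, using the right endpoint window and the block $\widehat\Sigma_{\nu_n:1}$. The plan is to bound $\log\det(\widehat\Sigma_{0:\nu_n}+\varepsilon I_q)$ from below and from above. The ridge gives the lower bound. The trace of the block second moment, controlled through Lemma~\ref{lem:boundary_energy}, gives the upper bound. We then multiply by $\nu_n$ and show both sides tend to zero.

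\emph{Lower bound.} Since $\widehat\Sigma_{0:\nu_n}\succeq0$, every eigenvalue of $\widehat\Sigma_{0:\nu_n}+\varepsilon I_q$ is at least $\varepsilon$. Hence
\[
\log\det\bigl(\widehat\Sigma_{0:\nu_n}+\varepsilon I_q\bigr)\ge q\log\varepsilon ,
\]
and so $\nu_n\log\det(\cdot)\ge \nu_n q\log\varepsilon\to0$. This is exactly where $\varepsilon>0$ is needed.

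\emph{Upper bound.} Write $k_n=\lfloor \nu_n n\rfloor=\nu_n n$, which is at least $1$ since $\nu_n\in\mathcal D_n$. By concavity of $\log$ (AM--GM applied to the eigenvalues),
\[
\log\det\bigl(\widehat\Sigma_{0:\nu_n}+\varepsilon I_q\bigr)\le q\log\Bigl(\varepsilon+\tfrac1q\Tr\widehat\Sigma_{0:\nu_n}\Bigr).
\]
We also have
\[
\Tr\widehat\Sigma_{0:\nu_n}=\frac1{k_n}\sum_{t=1}^{k_n}|y_t|^2=\frac{1}{\nu_n}\,E_n,
\qquad
E_n:=\frac1n\sum_{t=1}^{k_n}|y_t|^2 .
\]
By \eqref{eq:boundary_energy_sequential} of Lemma~\ref{lem:boundary_energy}, $E_n\to0$. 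Set $x_n:=\nu_n$ and use $\log(\varepsilon+u)\le \log(1+\varepsilon)+\log(1+u)$. This gives
\[
\nu_n\log\det(\cdot)\le q\,\nu_n\log(1+\varepsilon)+q\,\nu_n\log\Bigl(1+\frac{E_n}{q\,\nu_n}\Bigr).
\]
The first term tends to zero.

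\emph{The remaining term (main obstacle).} Here $\nu_n\to0$ but $E_n/\nu_n$ need not stay bounded, so a blow-up has to be ruled out. Write $u_n:=E_n/q$ and use $\log(1+u/\nu)\le \log(1+u)+\log(1/\nu)$ for $\nu\le1$. Then
\[
\nu_n\log\Bigl(1+\frac{u_n}{\nu_n}\Bigr)\le \nu_n\log(1+u_n)+\nu_n\log\frac1{\nu_n}.
\]
Both terms on the right tend to zero, because $u_n\to0$ and $\nu\log(1/\nu)\to0$ as $\nu\to0$.

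Combining the lower and upper bounds gives $\nu_n\bigl|\log\det(\widehat\Sigma_{0:\nu_n}+\varepsilon I_q)\bigr|\to0$. The hypotheses \ref{ass:bounded_second_moment} and $\Delta_{\mathrm{within}}<\infty$ enter only through Lemma~\ref{lem:boundary_energy}. In fact the elementary bound $x\log(1+c/x)\to0$ would hold even with merely bounded total energy, so the boundary-energy lemma is more than is strictly needed.
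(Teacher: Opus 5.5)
Your proof is correct. Its skeleton is the same as the paper's. The ridge floor gives $\nu_n\log\det(\widehat\Sigma_{0:\nu_n}+\varepsilon I_q)\ge q\nu_n\log\varepsilon\to0$, and the upper bound goes through the trace, $\Tr\widehat\Sigma_{0:\nu_n}=E_n/\nu_n$.

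The two arguments differ only in the final scalar estimate. The paper linearizes the logarithm at $\varepsilon$, $\log(\varepsilon+x)\le\log\varepsilon+x/\varepsilon$, which gives $\nu_n\log(\varepsilon+E_n/\nu_n)\le\nu_n\log\varepsilon+E_n/\varepsilon$. That bound only closes because $E_n\to0$, i.e.\ it uses the vanishing boundary energy \eqref{eq:boundary_energy_sequential} of Lemma~\ref{lem:boundary_energy}. You instead split off $\log(1/\nu_n)$ and use $\nu\log(1/\nu)\to0$, which needs only $\sup_n E_n<\infty$.

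As you note, this makes Lemma~\ref{lem:boundary_energy} unnecessary here. In fact $E_n\le\Tr\widehat\Sigma_{0:1}\le q\|\widehat\Sigma_{0:1}\|$. By \eqref{eq:block_additivity}, $\widehat\Sigma_{0:1}$ is a convex combination of the break-anchored blocks $\widehat\Sigma_{0:\theta}$ and $\widehat\Sigma_{\theta:1}$, so $\limsup_n E_n\le qM$. Hence your argument proves the lemma from \ref{ass:bounded_second_moment} and $\varepsilon>0$ alone, without $\Delta_{\mathrm{within}}<\infty$.

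The gain is local, though. The paper still needs Lemma~\ref{lem:boundary_energy} elsewhere, in Lemma~\ref{lem:endpoint_large_block} and in the endpoint step of Theorem~\ref{thm:classical_QML_consistency}. Also, $\Delta_{\mathrm{within}}<\infty$ holds in every place this lemma is applied. Your AM--GM bound with $\tfrac1q\Tr$ is slightly sharper than the paper's bound with $\Tr$, but this plays no role.
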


\begin{proof}[Proof of Lemma~\ref{lem:boundary_logdet}]
We prove the left endpoint statement; the right endpoint is identical. Write $\nu_n=k_n/n$ with $1\leq k_n\leq n-1$. The lower spectral bound $\widehat\Sigma_{0:\nu_n}+\varepsilon I_q\succeq \varepsilon I_q$ gives
\[
\nu_n\log\det\big(\widehat\Sigma_{0:\nu_n}+\varepsilon I_q\big)
\geq q\nu_n\log \varepsilon \longrightarrow0.
\]
For the upper bound,  equation~\eqref{eq:boundary_energy_sequential} of Lemma~\ref{lem:boundary_energy} gives
\[
e_n:=\frac1n\sum_{t=1}^{k_n}\|y_t\|^2
=\nu_n\,\Tr\widehat\Sigma_{0:\nu_n}
\longrightarrow0.
\]
Overestimating the eigenvalues by the trace, we get\[
\log\det\big(\widehat\Sigma_{0:\nu_n}+\varepsilon I_q\big)
\leq q\log\left(\varepsilon+\Tr\widehat\Sigma_{0:\nu_n}\right)
=q\log\left(\varepsilon+\frac{e_n}{\nu_n}\right).
\]
Overestimating $\log\left(\varepsilon+\frac{e_n}{\nu_n}\right)$ by linearization at $\varepsilon$ shows that the product
$\nu_n\log(\varepsilon+e_n/\nu_n)$ tends to zero since $e_n\to0$. This proves the claim.
\end{proof}

\begin{lem}[Lipschitz continuity of $\log \det$]\label{lem:lipischitz_log_det}
    Let us consider symmetric positive definite matrices $A,B \in \mathbb{R}^{q \times q}$ and define
    \begin{align}
        C(t):=(1-t)A + tB,\qquad  \mbox{for } t\in[0,1].
    \end{align}
    Assume that~$\sup_{t \in [0,1]} \|C(t)^{-1} \| < \infty$. Then
    \begin{align}\label{equ:lipischitz_log_det}
        & \left| \log \det (A) - \log \det (B)\right|  \\
        & \ \leq \sqrt{q} \ \sup_{t \in [0,1]} \|C(t)^{-1} \| \  \| A-B \|_F \leq q \ \sup_{t \in [0,1]} \|C(t)^{-1} \| \ \| A- B\|.
    \end{align}
\end{lem}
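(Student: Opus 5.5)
The proof will go through the fundamental theorem of calculus along the segment $C(t)$, followed by Cauchy--Schwarz in the Frobenius inner product. First observe that every $C(t)$ is symmetric positive definite: it is a convex combination of positive definite matrices. In particular $t\mapsto \log\det C(t)$ is well defined and smooth on $[0,1]$. Jacobi's formula for the derivative of the log-determinant gives
\[
\frac{d}{dt}\log\det C(t)=\Tr\bigl(C(t)^{-1}C'(t)\bigr)=\Tr\bigl(C(t)^{-1}(B-A)\bigr).
\]
Integrating from $0$ to $1$ then yields
\[
\log\det B-\log\det A=\int_0^1\Tr\bigl(C(t)^{-1}(B-A)\bigr)\,dt .
\]

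Next, I bound the integrand pointwise. By Cauchy--Schwarz for the Frobenius inner product,
\[
|\Tr(C(t)^{-1}(B-A))|\le \|C(t)^{-1}\|_F\,\|A-B\|_F .
\]
For the $q\times q$ matrix $C(t)^{-1}$, the standard comparison $\|N\|_F\le\sqrt q\,\|N\|$ applies. This follows from the eigenvalues, or simply from $\operatorname{rank}\le q$. Taking the supremum over $t$ gives the first inequality in \eqref{equ:lipischitz_log_det}.

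The second inequality follows by applying the same comparison once more, now to $A-B$: $\|A-B\|_F\le\sqrt q\,\|A-B\|$. This produces the factor $q$.

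No step is a real obstacle. The only point needing care is justifying differentiability, i.e.\ that $C(t)$ stays invertible along the whole segment. This is guaranteed by positive definiteness, and the hypothesis $\sup_t\|C(t)^{-1}\|<\infty$ is then automatic by compactness and continuity of the inverse.
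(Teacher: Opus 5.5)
Your proposal is correct and follows essentially the same route as the paper. The paper also applies Jacobi's formula to $\log\det C(t)$, integrates from $0$ to $1$, bounds the integrand by Frobenius Cauchy--Schwarz with $\|C(t)^{-1}\|_F\le\sqrt q\,\|C(t)^{-1}\|$, and obtains the second inequality from $\|A-B\|_F\le\sqrt q\,\|A-B\|$. Your side remark is also correct: for positive definite $A,B$ the hypothesis $\sup_{t}\|C(t)^{-1}\|<\infty$ holds automatically.
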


\begin{proof}[Proof of Lemma~\ref{lem:lipischitz_log_det}]
Set $\phi(t):=\log\det C(t)$. Jacobi's differential identity $d\,(\log\det X)=\Tr (X^{-1}dX)$ (see e.g.~\cite[Section~A.4.1]{BoydVandenberghe2004}) gives $\phi'(t)=\Tr\bigl(C(t)^{-1}(B-A)\bigr)$, so by the fundamental theorem of calculus
\[
\log\det(A)-\log\det(B)
=\phi(0)-\phi(1)=\int_0^1 \Tr \!\bigl(C(t)^{-1}(A-B)\bigr)\,dt .
\]
Bounding the integrand by Cauchy--Schwarz for the Frobenius inner product, $|\Tr(C(t)^{-1}(A-B))|\le \|C(t)^{-1}\|_F\,\|A-B\|_F\leq \sqrt{q}\,\sup_{t\in[0,1]}\|C(t)^{-1}\|\,\|A-B\|_F$, proves the first inequality of~\eqref{equ:lipischitz_log_det}; the second follows from $\|A-B\|_F\le \sqrt{q}\,\|A-B\|$.
\end{proof}

\begin{lem}[Sequential stability of the QML objective]\label{lem:seq_continuity_J}
Assume~\ref{ass:bounded_second_moment}, $\Delta_{\mathrm{within}}<\infty$, and that either
$\varepsilon>0$ or~\ref{ass:positive_definiteness} holds. If
$\nu_n\to\bar\nu\in(0,1)$, then $J_n(\nu_n)-J_n(\bar\nu)\longrightarrow0$.
\end{lem}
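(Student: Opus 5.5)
We compare the two block second moments at $\nu_n$ and at $\bar\nu$ and then pass the difference through the Lipschitz estimate of Lemma~\ref{lem:lipischitz_log_det}. Write
\[
J_n(\nu_n)-J_n(\bar\nu)=(\nu_n-\bar\nu)\log\det(\widehat\Sigma_{0:\nu_n}+\varepsilon I_q)+\bar\nu\bigl[\log\det(\widehat\Sigma_{0:\nu_n}+\varepsilon I_q)-\log\det(\widehat\Sigma_{0:\bar\nu}+\varepsilon I_q)\bigr]
\]
plus the analogous two right-block terms. We treat the left block; the right block is symmetric.

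\textbf{Step 1: uniform two-sided spectral control.} Fix $\rho>0$ with $[\bar\nu-\rho,\bar\nu+\rho]\subset(0,1)$. By Lemma~\ref{lem:block_additivity}, for $\nu\in[\bar\nu-\rho,\bar\nu+\rho]$ the block $\widehat\Sigma_{0:\nu}$ is a convex combination of $\widehat\Sigma_{0:\bar\nu-\rho}$ and $\widehat\Sigma_{\bar\nu-\rho:\nu}$. The latter carries weight at most of order $2\rho/(\bar\nu-\rho)$ and its trace is at most $n\cdot\frac1n\sum_{t\in I(\bar\nu-\rho,\bar\nu+\rho)}\|y_t\|^2/|I(\bar\nu-\rho,\nu)|$, which is controlled by Lemma~\ref{lem:boundary_energy}. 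Hence $\limsup_n\sup_{|\nu-\bar\nu|\le\rho}\|\widehat\Sigma_{0:\nu}\|\le C$ with $C$ depending on $M,\Delta_{\mathrm{within}},q,\bar\nu$, and the upper bound on $\log\det$ follows.

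For the lower bound:
\begin{itemize}
\item If $\varepsilon>0$, we have $\widehat\Sigma+\varepsilon I_q\succeq\varepsilon I_q$ trivially, so all inverses along the segment $C(t)$ have norm at most $\varepsilon^{-1}$.
\item If $\varepsilon=0$, we use (PD). Since $\nu_n\to\bar\nu$, the block $\widehat\Sigma_{0:\nu_n}$ dominates $\tfrac{k_{\bar\nu-\rho}}{k_n}\widehat\Sigma_{0:\bar\nu-\rho}$ in the psd order; this is the additivity identity with a psd remainder dropped. Here $I(0,\bar\nu-\rho)$ is a regime-cut block if $\bar\nu-\rho<\theta$. If $\bar\nu-\rho>\theta$, we instead use $I(0,\theta)\in\mathcal I_\theta^{\rm cut}$. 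So eventually $\lambda_{\min}\ge c\,m/2>0$ uniformly in $n$. The segment $C(t)$ between two psd matrices each $\succeq cm/2$ stays $\succeq cm/2$, so $\sup_t\|C(t)^{-1}\|\le 2/(cm)$.
\end{itemize}

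\textbf{Step 2: continuity.} Lemma~\ref{lem:modcont_cov} gives $\limsup_n\|\widehat\Sigma_{0:\nu_n}-\widehat\Sigma_{0:\bar\nu}\|\le\frac{2}{\min}(\Delta_{\mathrm{within}}+M)\,|\nu_n-\bar\nu|$. Because $\nu_n$ varies with $n$, we apply the lemma with fixed comparison points $\bar\nu\pm\eta$ and use monotone sandwiching via the additivity identity. Concretely, $\|\widehat\Sigma_{0:\nu_n}-\widehat\Sigma_{0:\bar\nu}\|\le\frac{|k_n-k_{\bar\nu}|}{\max(k_n,k_{\bar\nu})}$ times the norms of the blocks, and those norms are bounded by Step 1. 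The difference therefore tends to $0$. Lemma~\ref{lem:lipischitz_log_det} with the inverse bound from Step 1 then sends the bracketed term to $0$. The first term $(\nu_n-\bar\nu)\log\det(\cdot)$ vanishes because $\log\det$ is bounded by Step 1.

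\textbf{Main obstacle.} The expected difficulty is the uniform lower spectral bound when $\varepsilon=0$ and $\bar\nu$ lies on the opposite side of $\theta$ from the block in question. For example, for the right block with $\bar\nu<\theta$, $\widehat\Sigma_{\nu_n:1}$ is not itself a block in $\mathcal I^{\rm cut}_\theta$. The fix is to drop the psd contribution and keep only the sub-block $I(\theta,1)$ (or $I(\bar\nu+\rho,1)$ on the correct side), which has a positive fraction of the samples and $\lambda_{\min}\ge m$ asymptotically.
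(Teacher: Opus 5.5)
Your overall architecture matches the paper's proof. You split $J_n(\nu_n)-J_n(\bar\nu)$ into $(\nu_n-\bar\nu)$ times a bounded log-determinant plus log-determinant differences, obtain two-sided spectral bounds, and push $\|\widehat\Sigma_{0:\nu_n}-\widehat\Sigma_{0:\bar\nu}\|\to0$ through Lemma~\ref{lem:lipischitz_log_det}. Your $\varepsilon=0$ floor is correct: you dominate $\widehat\Sigma_{0:\nu_n}$ in psd order by a multiple of $\widehat\Sigma_{0:\bar\nu-\rho}$ or $\widehat\Sigma_{0:\theta}$, and $\widehat\Sigma_{\nu_n:1}$ by a multiple of $\widehat\Sigma_{\theta:1}$ or $\widehat\Sigma_{\bar\nu+\rho:1}$. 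This is a slightly more direct alternative to the paper, which bounds $\lambda_{\min}$ at the fixed $\bar\nu$ and transfers it after proving norm convergence.

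The gap is in Step~2, which is the only nontrivial point of the lemma. As you note, Lemma~\ref{lem:modcont_cov} cannot be applied with $\nu_1=\nu_n$ varying in $n$. Your ``concrete'' replacement does not work either. For $k_n>\bar k_n:=\lfloor\bar\nu n\rfloor$, additivity gives
\[
\widehat\Sigma_{0:\nu_n}-\widehat\Sigma_{0:\bar\nu}
=\frac{k_n-\bar k_n}{k_n}\bigl(\widehat\Sigma_{\bar\nu:\nu_n}-\widehat\Sigma_{0:\bar\nu}\bigr).
\]
Here $\widehat\Sigma_{\bar\nu:\nu_n}$ is the second moment of a window of length $o(n)$, possibly a single sample. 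Step~1 says nothing about it, and its norm need not be bounded. A single sample in that window with $\|y_t\|^2\sim\sqrt n$ is invisible to every macroscopic average. Yet it makes $\|\widehat\Sigma_{0:\nu_n}-\widehat\Sigma_{0:\bar\nu}\|$ of order $n^{-1/2}$, far larger than $\frac{|k_n-\bar k_n|}{\max(k_n,\bar k_n)}\,O(1)=O(1/n)$. So the claimed bound with norms ``bounded by Step~1'' is false under either reading of which blocks are meant.

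What must be controlled is the product of weight and window norm, i.e.\ $\frac1{k_n}\sum_{t=\bar k_n+1}^{k_n}\|y_t\|^2$. It vanishes not by boundedness but by a two-scale argument:
\begin{itemize}
\item For fixed $\rho>0$ the window eventually lies in $I(\bar\nu-\rho,\bar\nu+\rho)$.
\item Then \eqref{eq:local_energy} bounds the $\limsup$ by $2\rho q(M+\Delta_{\mathrm{within}})/\bar\nu$.
\item Finally send $\rho\downarrow0$.
\end{itemize}
This is exactly the paper's step.

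Your phrase ``fixed comparison points $\bar\nu\pm\eta$ with monotone sandwiching'' can be made into an equivalent proof. Set $P_k:=\sum_{t\le k}y_ty_t^{\top}$. Psd monotonicity gives $\|P_{k_n}-P_{\bar k_n}\|\le\|P_{\lfloor(\bar\nu+\eta)n\rfloor}-P_{\lfloor(\bar\nu-\eta)n\rfloor}\|$. After dividing by $n$, the $\limsup$ of the right side is at most $2\eta(M+\Delta_{\mathrm{within}})$ by Lemma~\ref{lem:opnorm_Sigma_ab}; then send $\eta\downarrow0$. That is the argument to write, not the bounded-norm one.

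The same care applies in Step~1. Bound the exact product $\frac1{k_\nu}\sum_{t\in I(\bar\nu-\rho,\nu)}\|y_t\|^2$. Do not multiply a weight bound by a separate trace bound, since the latter blows up as $\nu\downarrow\bar\nu-\rho$.
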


\begin{proof}[Proof of Lemma~\ref{lem:seq_continuity_J}]
Set
\[
k_n:=\lfloor \nu_n n\rfloor,
\qquad
\bar k_n:=\lfloor \bar\nu n\rfloor,
\qquad
r_n:=k_n\wedge\bar k_n,
\qquad
s_n:=k_n\vee\bar k_n.
\]
For all large enough~$n$, $1\leq r_n\leq s_n\leq n-1$.
We first compare the two prefix blocks.  Lemma~\ref{lem:block_additivity}, applied to the shorter prefix and the intervening window $\{r_n+1,\ldots,s_n\}$ (identity~\eqref{eq:block_additivity} if $k_n\geq\bar k_n$, identity~\eqref{eq:block_additivity_solved} otherwise), gives the common bound
\begin{align*}
\big\|\widehat\Sigma_{0:\nu_n}-\widehat\Sigma_{0:\bar\nu}\big\|
&\leq
\frac{|k_n-\bar k_n|}{r_n}
\big\|\widehat\Sigma_{0:\bar\nu}\big\|
+\frac1{r_n}\sum_{t=r_n+1}^{s_n}\|y_t\|^2 .
\end{align*}
Here $r_n/n\to\bar\nu$ and $|k_n-\bar k_n|/n\to0$, while
Lemma~\ref{lem:opnorm_Sigma_ab} bounds the fixed-block norm. Moreover, for
every fixed~$\rho>0$, the window $\{r_n+1,\ldots,s_n\}$ is contained, for all
large~$n$, in the $\rho$-window around~$\bar\nu$. Hence
\eqref{eq:local_energy} and then $\rho\downarrow0$ show that the second term
also tends to zero. Therefore
\[
\big\|\widehat\Sigma_{0:\nu_n}-\widehat\Sigma_{0:\bar\nu}\big\|
\longrightarrow0.
\]
The same argument applied to suffix blocks gives
\[
\big\|\widehat\Sigma_{\nu_n:1}-\widehat\Sigma_{\bar\nu:1}\big\|
\longrightarrow0.
\]

By Definition~\ref{def_singal_and_noise}, each of
$\widehat\Sigma_{0:\bar\nu}$ and
$\widehat\Sigma_{\bar\nu:1}$ is either a member of
$\mathcal I_\theta^{\rm cut}$ or a convex combination of two members of that family  (by~\eqref{eq:block_additivity}).
Consequently,
\[
\liminf_{n\to\infty}\lambda_{\min}(\widehat\Sigma_{0:\bar\nu})\geq m,
\qquad
\liminf_{n\to\infty}\lambda_{\min}(\widehat\Sigma_{\bar\nu:1})\geq m.
\]

We now turn these facts into a statement about the QML objectives. Abbreviate
\[
A_n:=\widehat\Sigma_{0:\nu_n}+\varepsilon I_q,\qquad
\bar A_n:=\widehat\Sigma_{0:\bar\nu}+\varepsilon I_q,\qquad
B_n:=\widehat\Sigma_{\nu_n:1}+\varepsilon I_q,\qquad
\bar B_n:=\widehat\Sigma_{\bar\nu:1}+\varepsilon I_q,
\]
so that $J_n(\nu_n)=\nu_n\log\det A_n+(1-\nu_n)\log\det B_n$ and $J_n(\bar\nu)=\bar\nu\log\det\bar A_n+(1-\bar\nu)\log\det\bar B_n$.

\emph{Common spectral bounds.} By the two displays above,
$\liminf_{n}\lambda_{\min}(\bar A_n)\geq m+\varepsilon$ and
$\liminf_{n}\lambda_{\min}(\bar B_n)\geq m+\varepsilon$, where $m+\varepsilon>0$
by~(PD) if $\varepsilon=0$ and trivially if $\varepsilon>0$. Since
$\|A_n-\bar A_n\|\to0$ and $\|B_n-\bar B_n\|\to0$, the same lower bounds hold
for $A_n$ and $B_n$. Lemma~\ref{lem:opnorm_Sigma_ab} gives
$\limsup_{n}\|\bar A_n\|\leq M+\Delta_{\mathrm{within}}+\varepsilon$, and the
same bound holds for $\bar B_n$, and hence also for $A_n$ and $B_n$.
Consequently, for all large enough~$n$, every convex combination
$C(t)=(1-t)A_n+t\bar A_n$, $t\in[0,1]$, satisfies
\[
\tfrac12(m+\varepsilon)\,I_q
\;\preceq\; C(t)\;\preceq\;
\big(M+\Delta_{\mathrm{within}}+\varepsilon+1\big)\,I_q,
\]
and the same holds for the pair $B_n,\bar B_n$.

\emph{Control of the log-determinants.} Lemma~\ref{lem:lipischitz_log_det} with
$\sup_{t\in[0,1]}\|C(t)^{-1}\|\leq 2/(m+\varepsilon)$ yields
\begin{align*}
|\log\det A_n-\log\det\bar A_n|
&\leq\frac{2q}{m+\varepsilon}\,\|A_n-\bar A_n\|\longrightarrow0,\\
|\log\det B_n-\log\det\bar B_n|
&\leq\frac{2q}{m+\varepsilon}\,\|B_n-\bar B_n\|\longrightarrow0,
\end{align*}
while the two-sided spectral bounds give, for all large~$n$,
\begin{align*}
|\log\det\bar A_n|+|\log\det\bar B_n|
&\leq 2q\max\Big\{\big|\log\tfrac{m+\varepsilon}{2}\big|,\
\log\big(M+\Delta_{\mathrm{within}}+\varepsilon+1\big)\Big\}\\
&=:K<\infty.
\end{align*}

\emph{Convergence of the objectives.} Writing
\begin{align*}
J_n(\nu_n)-J_n(\bar\nu)
&=\nu_n\big(\log\det A_n-\log\det\bar A_n\big)
+(1-\nu_n)\big(\log\det B_n-\log\det\bar B_n\big)\\
&\quad+(\nu_n-\bar\nu)\big(\log\det\bar A_n-\log\det\bar B_n\big),
\end{align*}
the first two terms tend to zero by the previous step, and the last one is
bounded by $|\nu_n-\bar\nu|\,K\to0$. This proves the claim.

\end{proof}

\begin{lem}[Stability of macroscopic blocks near the endpoints]\label{lem:endpoint_large_block}
  Assume~\ref{ass:bounded_second_moment} and $\Delta_{\mathrm{within}}<\infty$. \newline
  It holds
  \begin{align}
    \lim_{\mathcal D_n \ni \nu_n\to 1} \left\|\widehat\Sigma_{\theta:\nu_n}-\widehat\Sigma_{\theta:1}\right\|  = 0 \quad \mbox{and} \quad     \lim_{\mathcal D_n \ni \nu_n\to 0}  \left\|\widehat\Sigma_{\nu_n:\theta}-\widehat\Sigma_{0:\theta}\right\| = 0.
  \end{align}
Consequently, if $\varepsilon>0$ and $\nu_n\to1$, then a direct application of Lemma~\ref{lem:lipischitz_log_det} yields
\[
\left|
\log\det\big(\widehat\Sigma_{\theta:\nu_n}+\varepsilon I_q\big)
-
\log\det\big(\widehat\Sigma_{\theta:1}+\varepsilon I_q\big)
\right|
\leq
\frac{q}{\varepsilon}
\left\|\widehat\Sigma_{\theta:\nu_n}-\widehat\Sigma_{\theta:1}\right\|
\longrightarrow0,
\]
and symmetrically, if $\varepsilon>0$ and $\nu_n\to0$, the same bound holds for the pair $\widehat\Sigma_{\nu_n:\theta}$, $\widehat\Sigma_{0:\theta}$.
\end{lem}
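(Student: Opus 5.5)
We treat the right endpoint, $\nu_n\to1$; the left endpoint is symmetric, with Lemma~\ref{lem:block_additivity} applied to prefix windows. Write $\tau=\lfloor\theta n\rfloor$ and $k_n=\lfloor\nu_n n\rfloor=\nu_n n$. Since $\nu_n\to1>\theta$, for all large $n$ we have $\tau<k_n\le n$. The block $I(\theta,1)$ is then the disjoint union of $I(\theta,\nu_n)$ and the short tail $T_n:=\{k_n+1,\dots,n\}$. The key step is the identity obtained by solving~\eqref{eq:block_additivity}:
\[
\widehat\Sigma_{\theta:\nu_n}-\widehat\Sigma_{\theta:1}
=\frac{|T_n|}{k_n-\tau}\Bigl(\widehat\Sigma_{\theta:1}-\widehat\Sigma_{T_n}\Bigr).
\]
If $T_n=\emptyset$ both sides vanish. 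We deliberately apply it in the form that avoids dividing by $|T_n|$.

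Taking operator norms and using $\|\widehat\Sigma_{T_n}\|\le \tfrac{1}{|T_n|}\sum_{t\in T_n}\|y_t\|^2$ gives
\[
\bigl\|\widehat\Sigma_{\theta:\nu_n}-\widehat\Sigma_{\theta:1}\bigr\|
\le\frac{|T_n|}{k_n-\tau}\,\bigl\|\widehat\Sigma_{\theta:1}\bigr\|
+\frac{n}{k_n-\tau}\cdot\frac1n\sum_{t=n-|T_n|+1}^{n}\|y_t\|^2 .
\]
Each term is controlled separately:
\begin{itemize}
\item The prefactor satisfies $n/(k_n-\tau)\to1/(1-\theta)$, and $|T_n|/n=1-\nu_n\to0$.
\item The fixed block $I(\theta,1)$ lies in $\mathcal I_\theta^{\rm br}$, so $\limsup_n\|\widehat\Sigma_{\theta:1}\|\le M<\infty$ by (A1); alternatively one can cite Lemma~\ref{lem:opnorm_Sigma_ab}. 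Hence the first term tends to zero.
\item The second term is the tail energy of a window of length $\ell_n=|T_n|$ with $\ell_n/n\to0$. It tends to zero by~\eqref{eq:boundary_energy_sequential} of Lemma~\ref{lem:boundary_energy}, which applies because (A1) holds and $\Delta_{\mathrm{within}}<\infty$.
\end{itemize}
Together these give the first limit.

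The main subtlety is the second term. The tail $T_n$ may be microscopic, even a single observation, so one cannot bound $\|\widehat\Sigma_{T_n}\|$ by any macroscopic quantity. The identity above avoids this by weighting $\widehat\Sigma_{T_n}$ with $|T_n|$, which converts it into the vanishing boundary energy of Lemma~\ref{lem:boundary_energy}.

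For the log-determinant consequence with $\varepsilon>0$, apply Lemma~\ref{lem:lipischitz_log_det} with $A=\widehat\Sigma_{\theta:\nu_n}+\varepsilon I_q$ and $B=\widehat\Sigma_{\theta:1}+\varepsilon I_q$. Every convex combination $C(t)$ satisfies $C(t)\succeq\varepsilon I_q$, so $\sup_t\|C(t)^{-1}\|\le1/\varepsilon$. The second inequality in~\eqref{equ:lipischitz_log_det} then yields the bound $\frac q\varepsilon\|\widehat\Sigma_{\theta:\nu_n}-\widehat\Sigma_{\theta:1}\|$, which tends to zero by the first part. The case $\nu_n\to0$ uses the pair $\widehat\Sigma_{\nu_n:\theta}$, $\widehat\Sigma_{0:\theta}$ with the head window $\{1,\dots,k_n\}$ in place of $T_n$.
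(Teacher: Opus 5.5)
Your proposal is correct and follows the paper's proof essentially line by line. The paper uses the same solved block-additivity identity $\widehat\Sigma_{\theta:\nu_n}-\widehat\Sigma_{\theta:1}=\frac{\ell_n}{N_n-\ell_n}\bigl(\widehat\Sigma_{\theta:1}-\ell_n^{-1}B_n\bigr)$, with $N_n=n-\lfloor\theta n\rfloor$, $\ell_n=|T_n|$ and $B_n$ the tail sum of $y_ty_t^{\top}$. It then makes the same split into an (A1)-controlled term and a tail-energy term handled by \eqref{eq:boundary_energy_sequential}, and uses the same $1/\varepsilon$ resolvent bound in Lemma~\ref{lem:lipischitz_log_det} (your separate treatment of $T_n=\emptyset$ is harmless but unnecessary, since $\nu_n\in\mathcal D_n$ forces $|T_n|\geq1$).
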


\begin{proof}[Proof of Lemma~\ref{lem:endpoint_large_block}]
We only prove the case $\nu_n\to1$. For all large enough $n$ we have $\nu_n>\theta$. Let
\[
N_n:=n-\lfloor \theta n\rfloor,\qquad
 \ell_n:=n-\lfloor \nu_n n\rfloor,
\qquad
B_n:=\sum_{t=n-\ell_n+1}^{n}y_t y_t^{\top}.
\]
Then $\ell_n/N_n\to0$, and  by~\eqref{eq:block_additivity_solved} with $I=\{n-\ell_n+1,\ldots,n\}$ and $J=I(\theta,\nu_n)$,
\[
\widehat\Sigma_{\theta:\nu_n}
=\frac{N_n}{N_n-\ell_n}\widehat\Sigma_{\theta:1}
-\frac{1}{N_n-\ell_n}B_n .
\]
Therefore,
\[
\left\|\widehat\Sigma_{\theta:\nu_n}-\widehat\Sigma_{\theta:1}\right\|
\leq
\frac{\ell_n}{N_n-\ell_n}\|\widehat\Sigma_{\theta:1}\|
+\frac{\Tr B_n}{N_n-\ell_n}.
\]
The first term tends to zero by Assumption~\ref{ass:bounded_second_moment}, and the second term tends to zero by~\eqref{eq:boundary_energy_sequential}, since $\ell_n/n\to0$ and $(N_n-\ell_n)/n\to1-\theta>0$. The left endpoint is symmetric.
The log-determinant estimates are a direct application of Lemma~\ref{lem:lipischitz_log_det}: for the regularized blocks, every interpolant $(1-t)\bigl(\widehat\Sigma_{\theta:\nu_n}+\varepsilon I_q\bigr)+t\bigl(\widehat\Sigma_{\theta:1}+\varepsilon I_q\bigr)$ dominates $\varepsilon I_q$, so the supremum in~\eqref{equ:lipischitz_log_det} is at most $\varepsilon^{-1}$, and the claimed bounds follow; the case $\nu_n\to0$ is identical.
\end{proof}

\begin{lem}[Lipschitz continuity of the QML-objective functions]\label{lem:Lipschitz_J}
Assume~\ref{ass:bounded_second_moment},
$\Delta_{\mathrm{within}}<\infty$, and that either $\varepsilon>0$
or~\ref{ass:positive_definiteness} holds.
There exists a constant~$C <\infty$ only depending on~$q$, $\varepsilon$,~$m$,~$M$ and~$\Delta_{\text{within}}$ such that for all~$0 < \nu_1 < \nu_2 <1$
\begin{align}\label{equ:J_n_Lipschitz}
    \limsup_{n \to \infty}|J_n(\nu_1) - J_n(\nu_2)| \leq C |\nu_2 - \nu_1|
\end{align}
If, in addition, Assumption~\ref{ass:SLLN_in_each_regime} holds, then
\begin{align}\label{equ:J_infty_Lipschitz}
    |J_\infty(\nu_1) - J_\infty(\nu_2)| \leq C |\nu_2 - \nu_1|.
\end{align}
For every $\eta\in(0,1/2)$, the estimate~\eqref{equ:J_n_Lipschitz} holds uniformly on $[\eta,1-\eta]$ in the sense that
\begin{align}\label{equ:J_n_Lipschitz_uniform}
\limsup_{n\to\infty}\;
\sup_{\nu_1,\nu_2\in[\eta,1-\eta]}
\Big(|J_n(\nu_1)-J_n(\nu_2)|-C|\nu_2-\nu_1|\Big)\leq0.
\end{align}
In contrast to~\eqref{equ:J_n_Lipschitz}, the uniform version~\eqref{equ:J_n_Lipschitz_uniform} may be applied to points $\nu_i=\nu_i(n)$ that vary with $n$, as they do in the proof of Theorem~\ref{thm:classical_QML_consistency} below.
\end{lem}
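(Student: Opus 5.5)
We prove \eqref{equ:J_n_Lipschitz} first. For fixed $0<\nu_1<\nu_2<1$ we split
\begin{align}
J_n(\nu_2)-J_n(\nu_1)
&=\nu_2\bigl(\log\det A_n^{(2)}-\log\det A_n^{(1)}\bigr)
+(1-\nu_2)\bigl(\log\det B_n^{(2)}-\log\det B_n^{(1)}\bigr)\\
&\quad+(\nu_2-\nu_1)\bigl(\log\det A_n^{(1)}-\log\det B_n^{(1)}\bigr),
\end{align}
where $A_n^{(i)}:=\widehat\Sigma_{0:\nu_i}+\varepsilon I_q$ and $B_n^{(i)}:=\widehat\Sigma_{\nu_i:1}+\varepsilon I_q$. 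This mirrors the decomposition in the proof of Lemma~\ref{lem:seq_continuity_J}.

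\emph{Spectral bounds.} The prefix and suffix blocks $\widehat\Sigma_{0:\nu_i}$, $\widehat\Sigma_{\nu_i:1}$ are either members of $\mathcal I_\theta^{\rm cut}$ or, by \eqref{eq:block_additivity}, convex combinations of two such members. Hence $\liminf_n\lambda_{\min}\geq m$ for each of them, and so every convex combination $C(t)$ of $A_n^{(1)},A_n^{(2)}$ (respectively of $B_n^{(1)},B_n^{(2)}$) satisfies $\lambda_{\min}(C(t))\geq \tfrac12(m+\varepsilon)$ for large $n$. In the case $\varepsilon>0$ one can instead use the cruder bound $\lambda_{\min}\ge\varepsilon$, which is uniform over all windows.

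\emph{The three terms.} Lemma~\ref{lem:lipischitz_log_det} then bounds the first two terms by $\tfrac{2q}{m+\varepsilon}$ times the operator-norm differences. By Lemma~\ref{lem:modcont_cov}, these differences are asymptotically at most $\tfrac{2}{\nu_2}(\Delta_{\mathrm{within}}+M)|\nu_2-\nu_1|$ and $\tfrac{2}{1-\nu_1}(\Delta_{\mathrm{within}}+M)|\nu_2-\nu_1|$. The prefactors $\nu_2$ and $1-\nu_2\le 1-\nu_1$ cancel the denominators exactly, so the constant does not degenerate near the endpoints. The third term is bounded by $|\nu_2-\nu_1|$ times $2q\max\{|\log\tfrac{m+\varepsilon}{2}|,\log(M+\Delta_{\mathrm{within}}+\varepsilon+1)\}$, using the upper bound from Lemma~\ref{lem:opnorm_Sigma_ab}. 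Collecting terms gives $C=\tfrac{8q}{m+\varepsilon}(M+\Delta_{\mathrm{within}})+2q\max\{\cdots\}$, which depends only on $q,\varepsilon,m,M,\Delta_{\mathrm{within}}$.

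\emph{The limit objective.} The bound \eqref{equ:J_infty_Lipschitz} follows by the same computation applied directly to the population targets of Definition~\ref{def:population_targets}. Alternatively, under (A2) it follows by passing to the limit in \eqref{equ:J_n_Lipschitz}, since $J_n(\nu)\to J_\infty(\nu)$ pointwise for fixed $\nu$ (the blocks converge to the targets by (A2) and additivity, and $\log\det$ is continuous on the relevant spectral range).

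\emph{Uniform version.} The main obstacle is \eqref{equ:J_n_Lipschitz_uniform}, because Lemma~\ref{lem:modcont_cov} and the spectral bounds are only pointwise limsup statements for fixed $\nu$. I would argue by discretization. Fix a finite mesh $\eta=\xi_0<\xi_1<\dots<\xi_L=1-\eta$ of width $h$. For finitely many mesh points, all the above pointwise bounds hold simultaneously for $n$ large.

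For arbitrary $\nu\in[\eta,1-\eta]$, possibly $n$-dependent, let $\xi_j$ be the nearest mesh point. We control $\|\widehat\Sigma_{0:\nu}-\widehat\Sigma_{0:\xi_j}\|$ uniformly via the additivity argument of Lemma~\ref{lem:seq_continuity_J}: it is at most $\tfrac{|k-k_j|}{r}\|\widehat\Sigma_{0:\xi_j}\|$ plus the normalized energy of a window of width $\le h$ around $\xi_j$. That window energy is bounded uniformly by \eqref{eq:local_energy}, giving $O(h)$. This yields uniform spectral bounds, hence $|J_n(\nu)-J_n(\xi_j)|\le C'h$ uniformly in $\nu$ for large $n$.

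Then by the triangle inequality,
\[
|J_n(\nu_1)-J_n(\nu_2)|\le C|\xi_{j_1}-\xi_{j_2}|+2C'h\le C|\nu_1-\nu_2|+(2C'+2C)h .
\]
Taking $\limsup_n$ of the supremum and then letting $h\downarrow0$ gives \eqref{equ:J_n_Lipschitz_uniform}.
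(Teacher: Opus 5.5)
Your proof is correct. For \eqref{equ:J_n_Lipschitz} it matches the paper's argument up to bookkeeping. You attach the prefactors $\nu_2$ and $1-\nu_2$ to the log-determinant differences, so $1/\nu_2$ cancels exactly and $(1-\nu_2)/(1-\nu_1)\le1$. The paper attaches $\nu_1$ and $1-\nu_1$ and uses $\nu_1/\nu_2\le1$ instead; both give $8q(M+\Delta_{\mathrm{within}})/(m+\varepsilon)$ plus a log term. Your second route to \eqref{equ:J_infty_Lipschitz} passes to the limit using the pointwise convergence $J_n(\nu)\to J_\infty(\nu)$ under (A2). This is tidier than repeating the computation with the population targets, and it yields literally the same constant $C$ as in the statement.

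The real difference is the uniform bound \eqref{equ:J_n_Lipschitz_uniform}. The paper argues by contradiction. Violating points $\nu_i(n)$ converge along a subsequence to some $\bar\nu_i\in[\eta,1-\eta]$. Lemma~\ref{lem:seq_continuity_J} then replaces $J_n(\nu_i(n))$ by $J_n(\bar\nu_i)$, and the pointwise bound finishes the proof.

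You instead run a finite-mesh argument. You redo the block-additivity estimate from inside the proof of Lemma~\ref{lem:seq_continuity_J}, uniformly over all $\nu$ within $h$ of a mesh point, using $r\ge\lfloor\eta n\rfloor$ and \eqref{eq:local_energy} at finitely many centres. This gives an explicit $O(h)$ modulus of continuity that is manifestly uniform in how $\nu$ depends on $n$. The cost is re-deriving estimates the paper simply imports from a lemma it needs elsewhere anyway.

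When you write it out, make two points explicit:
\begin{itemize}
\item The pointwise bounds at the mesh points hold for large $n$ only up to an additive slack (say $h$), which vanishes in the final limit $h\downarrow0$.
\item For $\varepsilon=0$ you must take $h$ small relative to $m$. Then, by Weyl's inequality, the nearby blocks $\widehat\Sigma_{0:\nu}$ and $\widehat\Sigma_{\nu:1}$ inherit a spectral floor such as $(m+\varepsilon)/2$ from the mesh blocks before you apply Lemma~\ref{lem:lipischitz_log_det}.
\end{itemize}
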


\begin{proof}[Proof of Lemma~\ref{lem:Lipschitz_J}]
Argument for~\eqref{equ:J_n_Lipschitz}: For every fixed
$\nu\in(0,1)$, Definition~\ref{def_singal_and_noise} shows that each of
$\widehat\Sigma_{0:\nu}$ and $\widehat\Sigma_{\nu:1}$ is either the empirical
second moment of an interval in~$\mathcal I_\theta^{\rm cut}$ or a convex
combination of empirical second moments of two intervals in
$\mathcal I_\theta^{\rm cut}$. Hence their lower spectral limits are at least~$m$.
Lemma~\ref{lem:opnorm_Sigma_ab} bounds their upper spectral limits by
$M+\Delta_{\mathrm{within}}$. Consequently, for all sufficiently large~$n$,
their regularized eigenvalues lie in
\[
\left[\frac{m+\varepsilon}{2},\
2\bigl(M+\Delta_{\mathrm{within}}+\varepsilon\bigr)\right].
\]
To simplify calculations we introduce the notation
\[
L_n(\nu):=\log\det\big(\widehat\Sigma_{0:\nu}+\varepsilon I_q\big),
\qquad
R_n(\nu):=\log\det\big(\widehat\Sigma_{\nu:1}+\varepsilon I_q\big),
\]
which allows us to write
\begin{align}
    J_n(\nu)=\nu L_n(\nu)+(1-\nu)R_n(\nu).
\end{align}
Adding and subtracting terms gives
\begin{align*}
J_n(\nu_2)-J_n(\nu_1)
&= \nu_2 L_n(\nu_2)-\nu_1 L_n(\nu_1) + (1-\nu_2)R_n(\nu_2)-(1-\nu_1)R_n(\nu_1)\\
&=(\nu_2-\nu_1)L_n(\nu_2)+\nu_1\big(L_n(\nu_2)-L_n(\nu_1)\big)\\
&\quad-(\nu_2-\nu_1)R_n(\nu_2)+(1-\nu_1)\big(R_n(\nu_2)-R_n(\nu_1)\big).
\end{align*}
Hence, by the triangle inequality, we obtain
\begin{align}\label{eq:Jn-split}
|J_n(\nu_2)-J_n(\nu_1)| & \leq |\nu_2-\nu_1|\big(|L_n(\nu_2)|+|R_n(\nu_2)|\big) \\
  & \qquad + \nu_1|L_n(\nu_2)-L_n(\nu_1)| + (1-\nu_1)|R_n(\nu_2)-R_n(\nu_1)|.
\end{align}
As the determinant equals the product of eigenvalues, set
\[
C_{\log}:=
2q\max\left\{
\left|\log\!\left(2(M+\Delta_{\mathrm{within}}+\varepsilon)\right)\right|,
\left|\log\!\left(\frac{m+\varepsilon}{2}\right)\right|
\right\}.
\]
The preceding two-sided spectral bound gives
\begin{align}\label{equ:bound_1}
|L_n(\nu_2)|+|R_n(\nu_2)|\leq C_{\log}.
\end{align}
Setting~$A:=\widehat\Sigma_{0:\nu_2}$ and
$B:=\widehat\Sigma_{0:\nu_1}$ and using Lemma~\ref{lem:lipischitz_log_det}, we observe that for large enough~$n$
\begin{align}
|L_n(\nu_2)-L_n(\nu_1)|
& \leq q \sup\limits_{t\in [0,1]} \|(At+(1-t)B+\varepsilon I_q)^{-1}\|\,\|A-B\| \\
& \leq 4q\frac{M+\Delta_{\mathrm{within}}}{\nu_2(m+\varepsilon)}|\nu_2-\nu_1|, \label{equ:bound_2}
\end{align}
where we used the lower bound on the eigenvalues in combination with Lemma~\ref{lem:modcont_cov}. Similarly, we get
\begin{equation}
|R_n(\nu_2)-R_n(\nu_1)|
\leq 4q\frac{M+\Delta_{\mathrm{within}}}{(1-\nu_1)(m+\varepsilon)}|\nu_2-\nu_1|, \label{equ:bound_3}
\end{equation}
When we insert the bounds~\eqref{equ:bound_1},\eqref{equ:bound_2}, and~\eqref{equ:bound_3} into~\eqref{eq:Jn-split}, we get
\[
\begin{aligned}
|J_n(\nu_2)-J_n(\nu_1)|
&\leq |\nu_2-\nu_1|\Bigg(
4q
\frac{M+\Delta_{\mathrm{within}}}{m+\varepsilon}
\left(\frac{\nu_1}{\nu_2}+1\right)\\
&\qquad\qquad
+C_{\log}\Bigg).
\end{aligned}
\]
Incorporating the fact that \(\nu_1/\nu_2\leq 1\), we get the desired estimate~\eqref{equ:J_n_Lipschitz}, i.e.
\[
|J_n(\nu_2)-J_n(\nu_1)|\leq |\nu_2-\nu_1|\,C,
\]
where \(C\) has no dependence on \(\nu_2,\nu_1\).

The argument for~\eqref{equ:J_infty_Lipschitz} is similar after observing
under Assumption~\ref{ass:SLLN_in_each_regime} that the eigenvalues of
$\Sigma_{0:\nu}+\varepsilon I_q$ and $\Sigma_{\nu:1}+\varepsilon I_q$
belong to $[m+\varepsilon,M+\varepsilon]$.

Argument for~\eqref{equ:J_n_Lipschitz_uniform}: Suppose the claim fails for some $\eta\in(0,1/2)$. Then there exist $\alpha>0$, a subsequence (not relabelled), and points $\nu_1(n),\nu_2(n)\in[\eta,1-\eta]$ such that
\[
|J_n(\nu_1(n))-J_n(\nu_2(n))|
\geq C|\nu_2(n)-\nu_1(n)|+\alpha.
\]
By compactness of $[\eta,1-\eta]$, after passing to a further subsequence,
$\nu_i(n)\to\bar\nu_i\in[\eta,1-\eta]$ for $i=1,2$.
Lemma~\ref{lem:seq_continuity_J} gives
\[
J_n(\nu_i(n))-J_n(\bar\nu_i)\longrightarrow0,\qquad i=1,2.
\]
It follows that
\[
\limsup_{n\to\infty}|J_n(\nu_1(n))-J_n(\nu_2(n))|
\leq
\limsup_{n\to\infty}|J_n(\bar\nu_1)-J_n(\bar\nu_2)|
\leq C|\bar\nu_2-\bar\nu_1|,
\]
where the last inequality is~\eqref{equ:J_n_Lipschitz}. This contradicts the
preceding display because
$|\nu_2(n)-\nu_1(n)|\to|\bar\nu_2-\bar\nu_1|$.
\end{proof}

\begin{lem}[Jensen gap for $\log\det$]\label{lem:jensen_gap_logdet}
Let $A,B\in\bR^{q\times q}$ be symmetric positive definite matrices, $\lambda \in [0,1]$, and $C(t):=(1-t)A+tB$ for $t\in[0,1]$.
 Define the Jensen gap as
\begin{align}
    G(A,B;\lambda) : = \log\det\left(C(\lambda)\right)
-\Big((1-\lambda)\log\det A+\lambda\log\det B\Big).
\end{align}
If there are~$\mu_{\min},\mu_{\max}\in\mathbb{R}^+$ such that for all~$t \in (0,1)$ and~$ z \in \mathbb{R}^q$
\begin{align}\label{equ:hypo_jensen_gap}
  \mu_{\min} \|z\|^2 \leq   z^{\top} C(t) z \leq \mu_{\max} \|z\|^2
\end{align}
then
\begin{align}\label{equ:jensen_gap}
\frac{\lambda(1-\lambda)}{2\mu_{\max}^2}  & \|A-B\|_F^2 \le
 G(A,B;\lambda)
 \leq \frac{\lambda(1-\lambda)}{2\mu_{\min}^2}\,\|A-B\|_F^2.
\end{align}
Moreover, setting $D:=B-A$ and $g(t):=\log\det C(t)$, one has
\begin{align}\label{eq:gpp_logdet}
g''(t)=-\bigl\|C(t)^{-1/2}D\,C(t)^{-1/2}\bigr\|_F^2,\qquad t\in[0,1].
\end{align}
\end{lem}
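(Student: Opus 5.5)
We will prove the Jensen gap bound by reducing it to one-dimensional calculus along the segment $C(t)$. Set $g(t):=\log\det C(t)$ and $D:=B-A$, so $C'(t)=D$. Since $A,B\succ0$, every $C(t)$ with $t\in[0,1]$ is positive definite, and $g$ is smooth on $[0,1]$.

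The first step is the second-derivative formula~\eqref{eq:gpp_logdet}. Jacobi's identity, used already in the proof of Lemma~\ref{lem:lipischitz_log_det}, gives $g'(t)=\Tr\bigl(C(t)^{-1}D\bigr)$. Differentiating $C(t)^{-1}$ gives $\frac{d}{dt}C(t)^{-1}=-C(t)^{-1}DC(t)^{-1}$. Hence
\[
g''(t)=-\Tr\bigl(C(t)^{-1}DC(t)^{-1}D\bigr)=-\Tr\bigl((C^{-1/2}DC^{-1/2})^2\bigr)=-\bigl\|C(t)^{-1/2}DC(t)^{-1/2}\bigr\|_F^2 .
\]
The second equality uses cyclicity of the trace. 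The last equality holds because $C^{-1/2}DC^{-1/2}$ is symmetric.

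Next we bound $g''$ two-sidedly. Write $S:=C(t)^{-1/2}$. Hypothesis~\eqref{equ:hypo_jensen_gap} gives $\mu_{\max}^{-1}I\preceq C(t)^{-1}\preceq\mu_{\min}^{-1}I$. We claim
\[
\mu_{\max}^{-2}\|D\|_F^2\le\|SDS\|_F^2\le\mu_{\min}^{-2}\|D\|_F^2 .
\]
To see this, diagonalize $C(t)=Q\,\mathrm{diag}(c_i)Q^\top$ and put $\tilde D:=Q^\top DQ$. Then $\|SDS\|_F^2=\sum_{i,j}\tilde D_{ij}^2/(c_ic_j)$ with $c_i\in[\mu_{\min},\mu_{\max}]$, and $\|\tilde D\|_F=\|D\|_F$. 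The hypothesis is stated on the open interval $(0,1)$. It extends to $[0,1]$ by continuity, and in any case only the interior matters for the integral below. This yields
\[
-\mu_{\min}^{-2}\|D\|_F^2\le g''(t)\le-\mu_{\max}^{-2}\|D\|_F^2 .
\]

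Finally, we convert the curvature bound into a Jensen gap. By definition $G=g(\lambda)-(1-\lambda)g(0)-\lambda g(1)$. The standard interpolation-error identity gives
\[
G=\int_0^1 K_\lambda(t)\,\bigl(-g''(t)\bigr)\,dt .
\]
Here $K_\lambda(t)\ge0$ is the Green's kernel of $-\frac{d^2}{dt^2}$ on $[0,1]$ with Dirichlet conditions, namely $K_\lambda(t)=t(1-\lambda)$ for $t\le\lambda$ and $K_\lambda(t)=\lambda(1-t)$ for $t\ge\lambda$. Its total mass is $\int_0^1 K_\lambda=\lambda(1-\lambda)/2$. An equivalent route compares $g$ with the quadratics $\pm\frac{\kappa}{2}t^2$, since $g+\frac{\kappa}{2}t^2$ is concave or convex for the respective $\kappa$. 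Inserting the two-sided bound on $-g''$ gives~\eqref{equ:jensen_gap} directly.

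The main obstacle is only the kernel identity. We would verify it by applying Taylor's theorem with integral remainder at $\lambda$ to both $g(0)$ and $g(1)$.
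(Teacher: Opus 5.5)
Your proposal is correct and follows the paper's proof essentially step by step. Both use Jacobi's formula to get $g''(t)=-\|C(t)^{-1/2}DC(t)^{-1/2}\|_F^2$, the same interpolation-error (Green's kernel) identity $G=\int_0^1 K_\lambda(t)\,(-g''(t))\,dt$ with kernel mass $\lambda(1-\lambda)/2$, and the same two-sided sandwich $\mu_{\max}^{-2}\|D\|_F^2\le\|C(t)^{-1/2}DC(t)^{-1/2}\|_F^2\le\mu_{\min}^{-2}\|D\|_F^2$. Your diagonalization proof of that sandwich, which the paper only asserts, and your remark on extending the hypothesis from $(0,1)$ to the endpoints are small but welcome additions.
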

\begin{proof}[Proof of Lemma~\ref{lem:jensen_gap_logdet}]
Set \(D:=B-A\). The function
\[
g:[0,1]\to\mathbb{R},
\qquad
g(t):=\log\det C(t),
\]
is twice continuously differentiable. Jacobi's formula gives
\begin{align}\label{equ:Jacobi_formula}
\frac{d}{dt}\det C(t)
=
\det C(t)\,
\Tr\!\bigl(C(t)^{-1}C'(t)\bigr).
\end{align}
Since \(C'(t)=D\), it follows that
\begin{align}\label{eq:gp_logdet}
g'(t)
=
\Tr\!\bigl(C(t)^{-1}D\bigr).
\end{align}
Differentiating once more and using
\[
\frac{d}{dt}C(t)^{-1}
=
-C(t)^{-1}D\,C(t)^{-1},
\]
we obtain
\begin{align*}
g''(t)
&=
-\Tr\!\bigl(C(t)^{-1}D\,C(t)^{-1}D\bigr) \\
&=
-\Tr\!\left(
  \bigl(C(t)^{-1/2}D\,C(t)^{-1/2}\bigr)^2
 \right) \\
&=
-\bigl\|
 C(t)^{-1/2}D\,C(t)^{-1/2}
 \bigr\|_F^2.
\end{align*}
This proves~\eqref{eq:gpp_logdet}.\par\noindent Here we used the cyclicity of the trace and the fact that
\(C(t)^{-1/2}D\,C(t)^{-1/2}\) is symmetric.

Applying the fundamental theorem of calculus twice gives the
interpolation-error identity
\begin{align}
&g(\lambda)-(1-\lambda)g(0)-\lambda g(1) \nonumber\\
&\quad=
-(1-\lambda)\int_0^\lambda t\,g''(t)\,dt
-\lambda\int_\lambda^1(1-t)\,g''(t)\,dt.
\end{align}
Substituting~\eqref{eq:gpp_logdet}, we obtain
\begin{align}
G(A,B;\lambda)
&=
(1-\lambda)\int_0^\lambda
t\,
\bigl\|
 C(t)^{-1/2}D\,C(t)^{-1/2}
\bigr\|_F^2\,dt \nonumber\\
&\quad+
\lambda\int_\lambda^1
(1-t)\,
\bigl\|
 C(t)^{-1/2}D\,C(t)^{-1/2}
\bigr\|_F^2\,dt.
\label{eq:jensen_gap_logdet_identity}
\end{align}

For every symmetric positive-definite matrix \(C\) and every matrix
\(D\), we have
\begin{align}\label{equ:frobenius_operator_norm_estimate}
\frac{1}{\lambda_{\max}(C)}
\|D\|_F
\leq
\bigl\|C^{-1/2}D\,C^{-1/2}\bigr\|_F
\leq
\frac{1}{\lambda_{\min}(C)}
\|D\|_F.
\end{align}
Consequently, Assumption~\eqref{equ:hypo_jensen_gap} implies
\[
\frac{1}{\mu_{\max}^2}\|D\|_F^2
\leq
\bigl\|C(t)^{-1/2}D\,C(t)^{-1/2}\bigr\|_F^2
\leq
\frac{1}{\mu_{\min}^2}\|D\|_F^2.
\]
Finally,
\[
(1-\lambda)\int_0^\lambda t\,dt
+
\lambda\int_\lambda^1(1-t)\,dt
=
\frac{\lambda(1-\lambda)}{2}.
\]
Inserting these estimates into
\eqref{eq:jensen_gap_logdet_identity} yields
\[
\frac{\lambda(1-\lambda)}{2\mu_{\max}^2}
\|A-B\|_F^2
\leq
G(A,B;\lambda)
\leq
\frac{\lambda(1-\lambda)}{2\mu_{\min}^2}
\|A-B\|_F^2,
\]
which proves~\eqref{equ:jensen_gap}.
\end{proof}

\subsection{Proof of the main results}\label{sec:proof_main_results}

\subsubsection{Proof of Theorem~\ref{thm:classical_QML_consistency}}\label{sec:proof_consistency_infinite_signal_to_noise_ratio} 
Argument for (i): For easier notation we write $A:=\Sigma_{\le}+\varepsilon I_q$, $B:=\Sigma_{>}+\varepsilon I_q$ and $D:=A-B\neq 0$ by assumption (A2). We deduce the concavity of~$J_\infty$ on $(\theta,1)$; the interval~$(0,\theta)$ uses the same argument. Setting
\begin{align}
    X(\nu):=\frac{\theta}{\nu}A+\frac{\nu-\theta}{\nu}B
= B+\frac{\theta}{\nu}D ,
\end{align}
we rewrite~$J_\infty(\nu)$ for $\nu>\theta$ as
\begin{align}
J_\infty(\nu)=\nu\log\det X(\nu) + (1-\nu)\log\det B
=\nu\, g\!\left(\tfrac{\theta}{\nu}\right)+(1-\nu)\log\det B,
\end{align}
where $g(s):=\log\det\bigl((1-s)B+sA\bigr)$: the matrix $X(\nu)$ runs along the affine interpolation of the Jensen-gap Lemma~\ref{lem:jensen_gap_logdet} (with the roles of $A$ and $B$ interchanged) at parameter $s=\theta/\nu$. Differentiating twice, the $g'$-terms cancel, and the second-derivative identity~\eqref{eq:gpp_logdet} in Lemma~\ref{lem:jensen_gap_logdet} yields
\[
J_\infty''(\nu)
=\frac{\theta^2}{\nu^3}\, g''\!\left(\tfrac{\theta}{\nu}\right)
=
-\frac{\theta^2}{\nu^3}\,\big\|X(\nu)^{-1/2}D\,X(\nu)^{-1/2}\big\|_F^2.
\]
The Frobenius norm term is strictly positive since $D\neq 0$ and $X(\nu)$ is positive definite, hence proving the strict concavity of~$J_\infty$ on $(\theta,1)$.\smallskip

Argument for~\eqref{equ:cusp_separation_J_infty}: We observe that by Assumption~\ref{ass:bounded_second_moment} and~\ref{ass:SLLN_in_each_regime} it holds that for all~$z\in \mathbb{R}^q$ 
\begin{align}
    z^{\top} A z \leq (M+\varepsilon) |z|^2 \quad \mbox{and} \quad z^{\top} B z \leq (M+\varepsilon) |z|^2,
\end{align}
and therefore also 
\begin{align}
    z^{\top} ((1-t)A + tB) z \leq (M+\varepsilon) |z|^2 \quad \mbox{for all } t \in [0,1]. 
\end{align}
The complementary lower spectral hypothesis of
Lemma~\ref{lem:jensen_gap_logdet} also holds: the ridge supplies it
when~$\varepsilon>0$, while~(PD) supplies it when~$\varepsilon=0$.
This allows us to apply the lower bound of Lemma~\ref{lem:jensen_gap_logdet}, which yields, for the Jensen gap $G(A,B;\lambda)$ defined there and $\lambda\in[0,1]$, the estimate
\begin{equation}\label{eq:jensen_gap_lower}
G(A,B;\lambda)\;\ge\;\frac{\lambda(1-\lambda)}{2(M +\varepsilon)^2}\,\|A-B\|_F^2.
\end{equation}

\medskip
For $\nu>\theta$, set $\lambda=(\nu-\theta)/\nu\in(0,1)$ so that
\[
\frac{\theta}{\nu}A+\frac{\nu-\theta}{\nu}B=(1-\lambda)A+\lambda B.
\]
Using the definition of $J_\infty$ (cf.~Theorem~\ref{thm:classical_QML_consistency}), direct regrouping yields
\[
J_\infty(\nu)-J_\infty(\theta)=\nu\,G(A,B;\lambda).
\]
Combining this with \eqref{eq:jensen_gap_lower} gives
\[
J_\infty(\nu)-J_\infty(\theta)
\;\ge\;
\nu\cdot \frac{\lambda(1-\lambda)}{2(M + \varepsilon)^2}\,\|A-B\|_F^2.
\]
Now $\lambda=(\nu-\theta)/\nu$ and $1-\lambda=\theta/\nu$, hence
\[
\nu\lambda(1-\lambda)=\nu\cdot\frac{\nu-\theta}{\nu}\cdot\frac{\theta}{\nu}
=\frac{\theta(\nu-\theta)}{\nu}\;\ge\;\theta(\nu-\theta),
\]
since $\nu\le 1$.
Therefore,
\[
J_\infty(\nu)-J_\infty(\theta)
\;\ge\;
\frac{\theta}{2(M +\varepsilon)^2}\,\|A-B\|_F^2\,(\nu-\theta),
\qquad \nu\in(\theta,1].
\]
For $\nu<\theta$, a similar argument yields the estimate 
\[
J_\infty(\nu)-J_\infty(\theta)
\;\ge\;
\frac{1-\theta}{2(M+\varepsilon)^2}\,\|A-B\|_F^2\,(\theta-\nu),
\qquad \nu\in[0,\theta).
\]
 Since the ridge cancels in the difference, $A-B=\Sigma_{\leq}-\Sigma_{>}$ and hence $\|A-B\|_F=\|\Sigma_{>}-\Sigma_{\leq}\|_F$. Combining the two cases yields the desired inequality~\eqref{equ:cusp_separation_J_infty}.\medskip

Argument for (ii): We first prove the uniform convergence on trimmed grids. Fix $0<\delta<1/2$ and $\eta\in(0,1)$. Let $\mathcal{G}_\eta\subset [\delta,1-\delta]$ be a finite grid with mesh at most $\eta$.  Assumption~\ref{ass:SLLN_in_each_regime} and the definitions of $J_n$ and $J_\infty$ imply $J_n(\nu^\sharp)\to J_\infty(\nu^\sharp)$ for every $\nu^\sharp\in\mathcal{G}_\eta$; since the grid is finite,
\[
\max_{\nu^\sharp\in\mathcal{G}_\eta}|J_n(\nu^\sharp)-J_\infty(\nu^\sharp)|\xrightarrow[n\to\infty]{}0.
\]
For any $\nu\in\mathcal D_n(\delta)$ choose $\nu^\sharp\in\mathcal{G}_\eta$ with $|\nu-\nu^\sharp|\leq\eta$. We abbreviate the uniform Lipschitz defect
\[
R_n:=\sup_{\nu_1,\nu_2\in[\delta,1-\delta]}
\Big(|J_n(\nu_1)-J_n(\nu_2)|-C|\nu_2-\nu_1|\Big),
\qquad
\limsup_{n\to\infty}R_n\leq0,
\]
where  $C$ is the Lipschitz constant of Lemma~\ref{lem:Lipschitz_J} and the estimate holds by~\eqref{equ:J_n_Lipschitz_uniform} of Lemma~\ref{lem:Lipschitz_J}; note that $R_n$ controls the pair $(\nu,\nu^\sharp)\in[\delta,1-\delta]^2$ even though $\nu$ varies with $n$. By the triangle inequality and Lemma~\ref{lem:Lipschitz_J},
\begin{align*}
|J_n(\nu)-J_\infty(\nu)|
&\leq |J_n(\nu)-J_n(\nu^\sharp)|
+ |J_n(\nu^\sharp)-J_\infty(\nu^\sharp)|\\
&\qquad+ |J_\infty(\nu^\sharp)-J_\infty(\nu)|\\
&\leq 2C\eta+R_n+\max_{\mu\in\mathcal{G}_\eta}|J_n(\mu)-J_\infty(\mu)|.
\end{align*}
Sending first $n\to\infty$ and then $\eta\downarrow0$ proves~\eqref{equ:uniform_convergence_J_n}. This proves the whole assertion in the case $\varepsilon=0$. Since $\delta$ was arbitrary, the same argument also gives, for $\varepsilon>0$ and every fixed $s\in(0,1/2)$,
\begin{equation}\label{eq:middle_region_uniform}
    \lim_{n\to\infty}
    \sup_{\nu\in\mathcal D_n\cap[s,1-s]}
    |J_n(\nu)-J_\infty(\nu)|=0.
\end{equation}

It remains to remove the middle-region cutoff when $\varepsilon>0$. We do this by a sequential endpoint estimate. For $s>0$, set
\[
\begin{aligned}
E_n^L(s):=
\sup_{\nu\in\mathcal D_n\cap(0,s]}
|J_n(\nu)-J_\infty(\nu)|,\\
E_n^R(s):=
\sup_{\nu\in\mathcal D_n\cap[1-s,1)}
|J_n(\nu)-J_\infty(\nu)|.
\end{aligned}
\]
We claim
\begin{align}\label{eq:thm17_endpoint_terms}
    \lim_{s\downarrow0}\limsup_{n\to\infty}E_n^L(s)=0,
    \qquad
    \lim_{s\downarrow0}\limsup_{n\to\infty}E_n^R(s)=0.
\end{align}
By symmetry, it suffices to consider the left endpoint. We use the sequential
criterion for the first limit in~\eqref{eq:thm17_endpoint_terms}: it is enough
to prove that
\[
|J_n(\nu_n)-J_\infty(\nu_n)|\longrightarrow0
\]
for every sequence $\nu_n\in\mathcal D_n$ with $\nu_n\to0$. Indeed, failure
of the first limit would yield, by a diagonal choice, a sequence of this form
along which the difference stays bounded away from zero (completed by
$\nu_n:=1/n$ at any unused indices).

Fix such a sequence. Lemma~\ref{lem:boundary_logdet} gives
\[
\nu_n\left|\log\det\big(\widehat\Sigma_{0:\nu_n}+\varepsilon I_q\big)\right|
\longrightarrow0.
\]
Moreover, eventually $\nu_n<\theta$, so $\Sigma_{0:\nu_n}=\Sigma_{\leq}$ and
\[
\nu_n\left|\log\det\big(\Sigma_{0:\nu_n}+\varepsilon I_q\big)\right|
=\nu_n\left|\log\det\big(\Sigma_{\leq}+\varepsilon I_q\big)\right|
\longrightarrow0.
\]

It remains to compare the long right blocks. Set
\[
\Sigma_*:=\theta\Sigma_{\leq}+(1-\theta)\Sigma_{>}.
\]
 By~\eqref{eq:block_additivity} and Assumption~\ref{ass:SLLN_in_each_regime},
\[
\widehat\Sigma_{0:1}
=\frac{\lfloor\theta n\rfloor}{n}\widehat\Sigma_{0:\theta}
+\frac{n-\lfloor\theta n\rfloor}{n}\widehat\Sigma_{\theta:1}
\longrightarrow\Sigma_*.
\]
Writing $k_n:=n\nu_n\in\{1,\ldots,n-1\}$,  identity~\eqref{eq:block_additivity_solved} also gives
\[
\widehat\Sigma_{\nu_n:1}-\widehat\Sigma_{0:1}
=\frac{k_n}{n-k_n}\widehat\Sigma_{0:1}
-\frac{1}{n-k_n}\sum_{t=1}^{k_n}y_ty_t^{\top}.
\]
Consequently,
\[
\left\|\widehat\Sigma_{\nu_n:1}-\widehat\Sigma_{0:1}\right\|
\leq
\frac{k_n}{n-k_n}\left\|\widehat\Sigma_{0:1}\right\|
+\frac{1}{n-k_n}\sum_{t=1}^{k_n}\|y_t\|^2
\longrightarrow0,
\]
where the first term vanishes because $k_n/n=\nu_n\to0$ and
$\widehat\Sigma_{0:1}$ converges, while the second vanishes by
\eqref{eq:boundary_energy_sequential} of Lemma~\ref{lem:boundary_energy}.
Thus $\widehat\Sigma_{\nu_n:1}\to\Sigma_*$. On the other hand, the definition
of the population target gives
\[
\Sigma_{\nu_n:1}
=\frac{\theta-\nu_n}{1-\nu_n}\Sigma_{\leq}
+\frac{1-\theta}{1-\nu_n}\Sigma_{>}
\longrightarrow\Sigma_*.
\]
Hence
\[
\left\|\widehat\Sigma_{\nu_n:1}-\Sigma_{\nu_n:1}\right\|\longrightarrow0.
\]
Since the two regularized matrices dominate $\varepsilon I_q$,
Lemma~\ref{lem:lipischitz_log_det} now yields
\[
\left|
\log\det\big(\widehat\Sigma_{\nu_n:1}+\varepsilon I_q\big)
-\log\det\big(\Sigma_{\nu_n:1}+\varepsilon I_q\big)
\right|\longrightarrow0.
\]
Consequently,
\[
\begin{aligned}
|J_n(\nu_n)-J_\infty(\nu_n)|
&\leq \nu_n\left|\log\det\big(\widehat\Sigma_{0:\nu_n}+\varepsilon I_q\big)\right|\\
&\quad +\nu_n\left|\log\det\big(\Sigma_{0:\nu_n}+\varepsilon I_q\big)\right|\\
&\quad +(1-\nu_n)\left|
\log\det\big(\widehat\Sigma_{\nu_n:1}+\varepsilon I_q\big)
-\log\det\big(\Sigma_{\nu_n:1}+\varepsilon I_q\big)
\right|\\
&\longrightarrow0.
\end{aligned}
\]
This proves the sequential claim and
therefore the left endpoint limit in~\eqref{eq:thm17_endpoint_terms}. The right
endpoint follows symmetrically.

For fixed $s\in(0,\frac12\min\{\theta,1-\theta\})$, write
\[
E_n^M(s):=
\sup_{\nu\in\mathcal D_n\cap[s,1-s]}
|J_n(\nu)-J_\infty(\nu)|.
\]
 By~\eqref{eq:middle_region_uniform}, $E_n^M(s)\to0$. Since
\[
\sup_{\nu\in\mathcal D_n}|J_n(\nu)-J_\infty(\nu)|
\leq\max\big\{E_n^L(s),E_n^M(s),E_n^R(s)\big\},
\]
taking first $\limsup_{n\to\infty}$ and then $s\downarrow0$ in
\eqref{eq:thm17_endpoint_terms} proves
\eqref{equ:uniform_convergence_J_n_untrimmed}.

\medskip
Argument for (iii): Let $\theta_n:=\lfloor \theta n\rfloor/n$. Then $\theta_n\in\mathcal D_n$ for all large enough $n$ and $\theta_n\to\theta$. We first consider $\varepsilon>0$ and let $\hat\theta_n\in\arg\min_{\nu\in\mathcal D_n}J_n(\nu)$. Every subsequence has a further subsequence, again denoted by $\hat\theta_n$, converging to some $\nu\in[0,1]$. We prove that $\nu=\theta$.

Assume $\nu\neq\theta$ and set $\gamma:=|\nu-\theta|>0$. Let
\[
c_\infty:=
\frac{\min\{\theta,\,1-\theta\}}{2(M+\varepsilon)^2}\,
\|\Sigma_{>}-\Sigma_{\leq}\|_F^2>0
\]
be the cusp constant in~\eqref{equ:cusp_separation_J_infty}. For all large enough $n$, $|\hat\theta_n-\theta|\geq\gamma/2$. By part~(ii), for all large enough $n$,
\[
\sup_{\mu\in\mathcal D_n}|J_n(\mu)-J_\infty(\mu)|
\leq \frac{c_\infty\gamma}{16}.
\]
Let $C_\infty$ be the Lipschitz constant in~\eqref{equ:J_infty_Lipschitz}. Since $\theta_n\to\theta$, for all large enough $n$,
\[
|J_\infty(\theta_n)-J_\infty(\theta)|
\leq C_\infty|\theta_n-\theta|
\leq \frac{c_\infty\gamma}{16}.
\]
 By~\eqref{equ:cusp_separation_J_infty} and $|\hat\theta_n-\theta|\geq\gamma/2$, we have $J_\infty(\hat\theta_n)-J_\infty(\theta)\geq c_\infty|\hat\theta_n-\theta|\geq c_\infty\gamma/2$. Together with the two estimates above and the comparison point $\theta_n$, we get
\begin{align*}
J_n(\hat\theta_n)-J_n(\theta_n)
&\geq
J_\infty(\hat\theta_n)-J_\infty(\theta)
-2\sup_{\mu\in\mathcal D_n}|J_n(\mu)-J_\infty(\mu)|
-|J_\infty(\theta_n)-J_\infty(\theta)|\\
&\geq
c_\infty|\hat\theta_n-\theta|
-\frac{c_\infty\gamma}{8}
-\frac{c_\infty\gamma}{16} \geq
\frac{5}{16}c_\infty\gamma>0,
\end{align*}
contradicting $J_n(\hat\theta_n)\leq J_n(\theta_n)$. Hence every subsequential limit equals $\theta$, and $\hat\theta_n\to\theta$.

If $\varepsilon=0$, fix $0<\delta<\min\{\theta,1-\theta\}$ and let $\hat\theta_{n,\delta}\in\arg\min_{\nu\in\mathcal D_n(\delta)}J_n(\nu)$. The same argument applies with $\mathcal D_n(\delta)$ in place of $\mathcal D_n$, because $\theta_n\in\mathcal D_n(\delta)$ for all large enough $n$ and the trimmed uniform convergence~\eqref{equ:uniform_convergence_J_n} holds on that grid. Therefore $\hat\theta_{n,\delta}\to\theta$. \qed

\subsubsection{Proof of Theorem~\ref{thm:QML-robust}}\label{sec:proof_consistency_finite_signal_to_noise}

The proof implements the two-gap mechanism described in
Section~\ref{subsec:heuristic-eps}: the objective difference is decomposed
into a positive Jensen gap measuring separation between the regimes and a
negative gap measuring variation within a regime.
The two-sided estimate of Lemma~\ref{lem:jensen_gap_logdet} reduces positivity
to~\textup{(A3)} locally and~\textup{(A4)} globally, after which the continuity
and endpoint lemmas transfer the fixed-candidate bounds to minimizers on the
grid.

We first note that the hypotheses imply
$\Delta_{\mathrm{within}}<\infty$. Indeed, choosing the fixed blocks
$I(0,\theta)$ and $I(\theta,1)$ in the definition of
$\Delta_{\mathrm{between}}$, Assumption~(A1), the triangle inequality, and
$\|\cdot\|_F\leq\sqrt q\,\|\cdot\|$ give $\Delta_{\mathrm{between}}\leq2\sqrt q\,M<\infty$.
Under either~(A3) or~(A4), the corresponding strict inequality then forces
$\Delta_{\mathrm{within}}<\infty$. We may therefore use the auxiliary
lemmas above. For brevity, write $\ell_n(a,b):=\log\det\bigl(\widehat\Sigma_{a:b}+\varepsilon I_q\bigr)$.

\medskip
\noindent\emph{Fixed-candidate bounds.}
Fix $\nu>\theta$ and recall $k_\theta=\lfloor\theta n\rfloor$, $k_\nu=\lfloor\nu n\rfloor$.
For all large enough~$n$,  \eqref{eq:block_additivity} gives
\begin{align}\label{equ:decomp_empirical_second_moments}
\widehat\Sigma_{0:\nu}
=\frac{k_\theta}{k_\nu}\widehat\Sigma_{0:\theta}
+\frac{k_\nu-k_\theta}{k_\nu}\widehat\Sigma_{\theta:\nu},
\qquad
\widehat\Sigma_{\theta:1}
=\frac{k_\nu-k_\theta}{n-k_\theta}\widehat\Sigma_{\theta:\nu}
+\frac{n-k_\nu}{n-k_\theta}\widehat\Sigma_{\nu:1}.
\end{align}
Define
\ifwidedisplays
\begin{align*}
&A:=\widehat\Sigma_{0:\theta}+\varepsilon I_q,\quad
B:=\widehat\Sigma_{\theta:\nu}+\varepsilon I_q,\quad
C:=\widehat\Sigma_{\nu:1}+\varepsilon I_q,\qquad
\lambda_1:=\frac{k_\nu-k_\theta}{k_\nu},\quad
\lambda_2:=\frac{n-k_\nu}{n-k_\theta},\\
&\text{so that}\quad
\widehat\Sigma_{0:\nu}+\varepsilon I_q=(1-\lambda_1)A+\lambda_1B
\quad\text{and}\quad
\widehat\Sigma_{\theta:1}+\varepsilon I_q=(1-\lambda_2)B+\lambda_2C.
\end{align*}
\else
\begin{align*}
&A:=\widehat\Sigma_{0:\theta}+\varepsilon I_q,\quad
B:=\widehat\Sigma_{\theta:\nu}+\varepsilon I_q,\quad
C:=\widehat\Sigma_{\nu:1}+\varepsilon I_q,\\
&\lambda_1:=\frac{k_\nu-k_\theta}{k_\nu},\qquad
\lambda_2:=\frac{n-k_\nu}{n-k_\theta},\\
&\text{so that}\quad
\widehat\Sigma_{0:\nu}+\varepsilon I_q=(1-\lambda_1)A+\lambda_1B
\quad\text{and}\quad
\widehat\Sigma_{\theta:1}+\varepsilon I_q=(1-\lambda_2)B+\lambda_2C.
\end{align*}
\fi
Using the Jensen gap $G$ of Lemma~\ref{lem:jensen_gap_logdet}, direct
expansion yields
\begin{equation}\label{eq:two-gap}
J_n(\nu)-J_n(\theta)
=\frac{k_\nu}{n}G(A,B;\lambda_1)
-\frac{n-k_\theta}{n}G(B,C;\lambda_2)
+r_n(\nu),
\end{equation}
where the floor-weight remainder is
\begin{align}\label{eq:two-gap-remainder}
r_n(\nu)
:=\Big(\nu-\frac{k_\nu}{n}\Big)
\bigl(\ell_n(0,\nu)-\ell_n(\nu,1)\bigr)
-\Big(\theta-\frac{k_\theta}{n}\Big)
\bigl(\ell_n(0,\theta)-\ell_n(\theta,1)\bigr).
\end{align}

For fixed~$\nu$, Lemma~\ref{lem:opnorm_Sigma_ab} supplies a common finite
spectral ceiling for the four blocks in~\eqref{eq:two-gap-remainder}.
The ridge supplies a positive floor when $\varepsilon>0$; when
$\varepsilon=0$, (PD), Definition~\ref{def_singal_and_noise}, and the
 decomposition~\eqref{eq:block_additivity} of the crossing block supply one. Thus there are
constants $c_\nu,C_\nu>0$ such that, for all large enough~$n$,
\begin{align}\label{eq:block_sandwich}
c_\nu I_q
\preceq\widehat\Sigma_{a:b}+\varepsilon I_q
\preceq C_\nu I_q,
\qquad
(a,b)\in\{(0,\nu),(\nu,1),(0,\theta),(\theta,1)\}.
\end{align}
The sandwich~\eqref{eq:block_sandwich} bounds the four log-determinants.
 Since $0\leq\nu-k_\nu/n<1/n$ and $0\leq\theta-k_\theta/n<1/n$ in~\eqref{eq:two-gap-remainder},
\begin{align}\label{eq:remainder_vanishes}
r_n(\nu)\longrightarrow0
\qquad\text{for every fixed }\nu\in(0,1).
\end{align}

Since the macroscopic intervals underlying $A$ and $B$ belong to $\mathcal I_\theta^{\mathrm{br}}$, while those underlying $B$ and $C$ belong to $\mathcal I_\theta^{\mathrm{cut}}$, equation~\eqref{equ:def_M_m} gives
\[
\limsup_{n\to\infty}\bigl(\lambda_{\max}(A)\vee\lambda_{\max}(B)\bigr)
\leq M+\varepsilon,
\qquad
\liminf_{n\to\infty}\bigl(\lambda_{\min}(B)\wedge\lambda_{\min}(C)\bigr)
\geq m+\varepsilon>0.
\]
Here $m+\varepsilon>0$ follows from the ridge when $\varepsilon>0$, or from~(PD) when $\varepsilon=0$. The complementary bounds required in~\eqref{equ:hypo_jensen_gap}, a floor for $A,B$ and a ceiling for $B,C$, follow from the ridge or~(PD) and from Lemma~\ref{lem:opnorm_Sigma_ab}, respectively. Since the blocks underlying $A$ and $B$ lie in opposite regimes, whereas those underlying $B$ and $C$ lie in the same regime, equations~\eqref{eq:Delta-within} and~\eqref{eq:Delta-between} give
\[
\liminf_{n\to\infty}\|A-B\|_F\geq\Delta_{\mathrm{between}},
\qquad
\limsup_{n\to\infty}\|B-C\|_F\leq\Delta_{\mathrm{within}},
\]
where the $\varepsilon I_q$ terms cancel in both differences. Lemma~\ref{lem:jensen_gap_logdet} now yields the two estimates below.

Therefore
\ifwidedisplays
\[
\liminf_{n\to\infty}G(A,B;\lambda_1)
\geq
\frac{\theta(\nu-\theta)}{2\nu^2(M+\varepsilon)^2}
\Delta_{\mathrm{between}}^2,
\qquad
\limsup_{n\to\infty}G(B,C;\lambda_2)
\leq
\frac{(1-\nu)(\nu-\theta)}{2(1-\theta)^2(m+\varepsilon)^2}
\Delta_{\mathrm{within}}^2.
\]
\else
\begin{align*}
\liminf_{n\to\infty}G(A,B;\lambda_1)
&\geq
\frac{\theta(\nu-\theta)}{2\nu^2(M+\varepsilon)^2}
\Delta_{\mathrm{between}}^2,\\
\limsup_{n\to\infty}G(B,C;\lambda_2)
&\leq
\frac{(1-\nu)(\nu-\theta)}{2(1-\theta)^2(m+\varepsilon)^2}
\Delta_{\mathrm{within}}^2.
\end{align*}
\fi
Introduce the normalized signal and noise levels
$S:=\Delta_{\mathrm{between}}^2/(M+\varepsilon)^2$ and
$W:=\Delta_{\mathrm{within}}^2/(m+\varepsilon)^2$.
Inserting the last two estimates into~\eqref{eq:two-gap} and using
\eqref{eq:remainder_vanishes} gives
\begin{align}\label{eq:jn-lower-pre}
\liminf_{n\to\infty}\bigl(J_n(\nu)-J_n(\theta)\bigr)
\geq\frac12 T_+(\nu)(\nu-\theta),
\qquad
T_+(\nu):=\frac{\theta}{\nu}S-\frac{1-\nu}{1-\theta}W,
\qquad \nu>\theta.
\end{align}
Applying the same calculation to the time-reversed sequence, equivalently
interchanging left and right and replacing $(\nu,\theta)$ by
$(1-\nu,1-\theta)$, yields
\begin{align}\label{eq:two-gap-left}
\liminf_{n\to\infty}\bigl(J_n(\nu)-J_n(\theta)\bigr)
\geq\frac12 T_-(\nu)(\theta-\nu),
\qquad
T_-(\nu):=\frac{1-\theta}{1-\nu}S-\frac{\nu}{\theta}W,
\qquad \nu<\theta.
\end{align}

\medskip
\noindent\emph{Argument for (i).}
Assumption~(A3) is exactly $S>W$. Moreover, $T_+(\theta)=T_-(\theta)=S-W>0$, so by continuity there is $\bar\delta>0$ such that $\min\{T_-(\nu),T_+(\nu)\}\geq(S-W)/2$ whenever $|\nu-\theta|\leq\bar\delta$.
The two inequalities~\eqref{eq:jn-lower-pre} and~\eqref{eq:two-gap-left} prove part~(i)
with
\begin{align}\label{eq:const_local}
c=\frac{S-W}{4}.
\end{align}

\medskip
\noindent\emph{Argument for (ii).}
Assumption~(A4) is equivalent to
\begin{align}\label{eq:normalized_A4}
4\theta(1-\theta)S>W.
\end{align}
If $W=0$, then $S>0$ and $T_+(\nu)\geq\theta S$, $T_-(\nu)\geq(1-\theta)S$.
Suppose now that $W>0$ and set  $d:=2\sqrt{\theta(1-\theta)SW}-W$, which is positive by~\eqref{eq:normalized_A4}.
The arithmetic-geometric mean inequality gives, for $\nu\geq\theta$ and for $\nu\leq\theta$ respectively,
\ifwidedisplays
\[
T_+(\nu)
=\frac{\theta S}{\nu}+\frac{\nu W}{1-\theta}-\frac{W}{1-\theta}
\geq\frac{d}{1-\theta},
\qquad
T_-(\nu)
=\frac{(1-\theta)S}{1-\nu}+\frac{(1-\nu)W}{\theta}-\frac{W}{\theta}
\geq\frac{d}{\theta}.
\]
\else
\begin{align*}
T_+(\nu)
&=\frac{\theta S}{\nu}+\frac{\nu W}{1-\theta}-\frac{W}{1-\theta}
\geq\frac{d}{1-\theta},\\
T_-(\nu)
&=\frac{(1-\theta)S}{1-\nu}+\frac{(1-\nu)W}{\theta}-\frac{W}{\theta}
\geq\frac{d}{\theta}.
\end{align*}
\fi
Thus inequalities~\eqref{eq:jn-lower-pre} and~\eqref{eq:two-gap-left}
prove~\eqref{equ:minimizer_a4}. One may take
\begin{align}\label{eq:const_global}
c=
\begin{cases}
\frac12\min\{\theta,1-\theta\}S,&W=0,\\[2mm]
\dfrac{d}{2\max\{\theta,1-\theta\}},&W>0.
\end{cases}
\end{align}

\medskip
\noindent\emph{Consistency of the minimizers.}
Let $\theta_n:=\lfloor \theta n\rfloor/n$. Then
$\theta_n\in\mathcal D_n$ for all large enough~$n$ and
$\theta_n\to\theta$. Lemma~\ref{lem:seq_continuity_J} gives
\begin{align}\label{eq:grid_true_split_theta}
J_n(\theta_n)-J_n(\theta)=o(1).
\end{align}

First consider the trimmed estimator, which is the only assertion made when
$\varepsilon=0$. Fix $0<\delta<\min\{\theta,1-\theta\}$ and let $\widehat\theta_{n,\delta}\in\arg\min_{\nu\in\mathcal D_n(\delta)}J_n(\nu)$.
For all large enough~$n$, also $\theta_n\in\mathcal D_n(\delta)$. Every
subsequence has a further subsequence, again denoted by
$\widehat\theta_{n,\delta}$, converging to some
$\nu\in[\delta,1-\delta]$. If $\nu\neq\theta$, then
Lemma~\ref{lem:seq_continuity_J},~\eqref{equ:minimizer_a4}, and
\eqref{eq:grid_true_split_theta} give
\[
\liminf_{n\to\infty}
\bigl(J_n(\widehat\theta_{n,\delta})-J_n(\theta_n)\bigr)
=\liminf_{n\to\infty}
\bigl(J_n(\nu)-J_n(\theta)\bigr)
\geq c|\nu-\theta|>0.
\]
This contradicts the minimality of $\widehat\theta_{n,\delta}$. Hence every
subsequential limit equals~$\theta$, and
$\widehat\theta_{n,\delta}\to\theta$.

Now assume $\varepsilon>0$ and let $\widehat\theta_n\in\arg\min_{\nu\in\mathcal D_n}J_n(\nu)$.
The preceding argument excludes every interior subsequential limit other
than~$\theta$. It remains to exclude the endpoints. We treat~$1$; the case
$0$ is symmetric.

Suppose along a subsequence that $\widehat\theta_n\to1$. For all large enough
$n$, $\widehat\theta_n>\theta$. Set
\ifwidedisplays
\[
A_n:=\widehat\Sigma_{0:\theta}+\varepsilon I_q,\quad
B_n:=\widehat\Sigma_{\theta:\widehat\theta_n}+\varepsilon I_q,\quad
C_n:=\widehat\Sigma_{\widehat\theta_n:1}+\varepsilon I_q,\qquad
\lambda_{1,n}:=\frac{k_{\widehat\theta_n}-k_\theta}{k_{\widehat\theta_n}},\quad
\lambda_{2,n}:=\frac{n-k_{\widehat\theta_n}}{n-k_\theta}.
\]
\else
\begin{align*}
&A_n:=\widehat\Sigma_{0:\theta}+\varepsilon I_q,\quad
B_n:=\widehat\Sigma_{\theta:\widehat\theta_n}+\varepsilon I_q,\quad
C_n:=\widehat\Sigma_{\widehat\theta_n:1}+\varepsilon I_q,\\
&\lambda_{1,n}:=\frac{k_{\widehat\theta_n}-k_\theta}{k_{\widehat\theta_n}},\qquad
\lambda_{2,n}:=\frac{n-k_{\widehat\theta_n}}{n-k_\theta}.
\end{align*}
\fi
 Applying equation~\eqref{eq:two-gap} with $\nu=\widehat\theta_n$ gives
\begin{align}\label{eq:two-gap-endpoint}
J_n(\widehat\theta_n)-J_n(\theta)
=\frac{k_{\widehat\theta_n}}{n}
G(A_n,B_n;\lambda_{1,n})
-\frac{n-k_\theta}{n}G(B_n,C_n;\lambda_{2,n})
+r_n(\widehat\theta_n).
\end{align}
 By~\eqref{eq:two-gap-remainder},
\begin{align*}
r_n(\widehat\theta_n)
={}&\Big(\widehat\theta_n-\frac{k_{\widehat\theta_n}}{n}\Big)
\bigl(\ell_n(0,\widehat\theta_n)-\ell_n(\widehat\theta_n,1)\bigr)
-\Big(\theta-\frac{k_\theta}{n}\Big)
\bigl(\ell_n(0,\theta)-\ell_n(\theta,1)\bigr).
\end{align*}
Since $\widehat\theta_n\in\mathcal D_n$, the first floor error vanishes, $\widehat\theta_n-k_{\widehat\theta_n}/n=0$, and hence
\[
r_n(\widehat\theta_n)
=-\left(\theta-\frac{k_\theta}{n}\right)
\bigl(\ell_n(0,\theta)-\ell_n(\theta,1)\bigr)
=o(1)
\]
by Assumption~(A1) and the ridge lower bound.

 By Lemma~\ref{lem:endpoint_large_block} and $\|\cdot\|_F\leq\sqrt q\,\|\cdot\|$,
\begin{align}\label{eq:endpoint_block_convergence}
\big\|\widehat\Sigma_{\theta:\widehat\theta_n}
-\widehat\Sigma_{\theta:1}\big\|_F\longrightarrow0.
\end{align}
For $G(A_n,B_n;\lambda_{1,n})$, equation~\eqref{eq:endpoint_block_convergence}, the triangle
inequality, and the definition of~$M$ first give
\[
\limsup_{n\to\infty}\|\widehat\Sigma_{\theta:\widehat\theta_n}\|
\leq
\limsup_{n\to\infty}\|\widehat\Sigma_{\theta:1}\|
\leq M.
\]
For the Frobenius gap, the ridge terms cancel and
$A_n-B_n=(\widehat\Sigma_{0:\theta}-\widehat\Sigma_{\theta:1})-(\widehat\Sigma_{\theta:\widehat\theta_n}-\widehat\Sigma_{\theta:1})$.
 Equation~\eqref{eq:endpoint_block_convergence}, the reverse triangle inequality and the definition of
$\Delta_{\mathrm{between}}$ therefore yield
\[
\liminf_{n\to\infty}\|A_n-B_n\|_F
\geq
\liminf_{n\to\infty}
\|\widehat\Sigma_{0:\theta}-\widehat\Sigma_{\theta:1}\|_F
\geq\Delta_{\mathrm{between}}.
\]
Moreover,
$k_{\widehat\theta_n}/n\to1$ and
$\lambda_{1,n}\to1-\theta$. Lemma~\ref{lem:jensen_gap_logdet} therefore gives
\begin{align}\label{eq:endpoint_positive_gap}
\liminf_{n\to\infty}
\frac{k_{\widehat\theta_n}}{n}
G(A_n,B_n;\lambda_{1,n})
\geq
\frac{\theta(1-\theta)}{2(M+\varepsilon)^2}
\Delta_{\mathrm{between}}^2>0.
\end{align}

 For $G(B_n,C_n;\lambda_{2,n})$, identity~\eqref{eq:block_additivity} gives
$(1-\lambda_{2,n})B_n+\lambda_{2,n}C_n=\widehat\Sigma_{\theta:1}+\varepsilon I_q$,
and hence
\[
G(B_n,C_n;\lambda_{2,n})
=\ell_n(\theta,1)-\log\det B_n
+\lambda_{2,n}\bigl(\log\det B_n-\log\det C_n\bigr).
\]
The first difference is $o(1)$ by Lemma~\ref{lem:endpoint_large_block}, and
$\lambda_{2,n}\log\det B_n=o(1)$ because $\lambda_{2,n}\to0$ and
$\log\det B_n$ is bounded. Finally, for a constant independent of~$n$,
\[
\lambda_{2,n}|\log\det C_n|
\leq C(1-\widehat\theta_n)
\left|\log\det\bigl(
\widehat\Sigma_{\widehat\theta_n:1}+\varepsilon I_q
\bigr)\right|
\longrightarrow0
\]
by Lemma~\ref{lem:boundary_logdet}. Thus
$G(B_n,C_n;\lambda_{2,n})=o(1)$.

Together with~\eqref{eq:two-gap-endpoint} and
\eqref{eq:endpoint_positive_gap}, this gives
$\liminf_{n\to\infty}\bigl(J_n(\widehat\theta_n)-J_n(\theta)\bigr)>0$.
Equation~\eqref{eq:grid_true_split_theta} then implies
$J_n(\widehat\theta_n)>J_n(\theta_n)$ for all large enough~$n$, contradicting
minimality over~$\mathcal D_n$. Hence neither endpoint is a subsequential
limit, so $\widehat\theta_n\to\theta$. \qed

\subsubsection{Proof of Proposition~\ref{prop:sharp-A4-fixed-theta}}\label{sec:proof_optimality}
Let us define~$a= 1+ K+r$,~$b=1+K$, and~$c = 1$ and~$\nu_\star = 0.5$.\smallskip

Argument for~\eqref{equ:optimality_signal_and_noise} and~\eqref{equ:optimality_signal_and_noise_at_05}:
To compute $\Delta_{\rm within}$ and $\Delta_{\rm between}$ we observe that for~$q=1$ the block second moment on a macroscopic interval is just the block average of $y_t^2$, and the Frobenius norm~$\|\cdot\|_F$ is just the absolute value. On the left regime $[0,\theta]$ the profile is constant with second moment equal to $a$, so all left-block averages equal $a$ and therefore the within-left fluctuation is $0$.
On the right regime $(\theta,1]$, the profile takes the two macroscopic values $b$ and $c$ on
 $(\theta,\nu_\star]$ and $(\nu_\star,1]$, respectively.
Choosing one block entirely in the $b$-part and one entirely in the $c$-part yields
\[
\Delta_{\rm within}=|b-c|=K.
\]
For $\Delta_{\rm between}$, any left-block average equals $a$, while any right-block average lies in $[c,b]$
(since it is an average of $b$ and $c$). Hence the minimal separation is attained by a block inside the $b$-part:
\[
    \Delta_{\rm between}=|a-b|=r.
\]
At the candidate~$\nu_\star=\tfrac12$, every left-block average lies in
$[b,a]$, with both endpoints attained by blocks contained in the $b$- and
$a$-parts, while every right-block average equals~$c$. Hence
\[
\Delta_{\rm within}(\nu_\star)=a-b=r,
\qquad
\Delta_{\rm between}(\nu_\star)=b-c=K.
\]
Together with the preceding calculation, this proves
\eqref{equ:optimality_signal_and_noise}
and~\eqref{equ:optimality_signal_and_noise_at_05}.\smallskip

In the next step, we explicitly calculate~$J_\infty$.  Here $J_\infty$ denotes the pointwise limit of~$J_n$ for the three-level series~\eqref{eq:triangular-array-sharpA4}. Since Assumption~(A2) fails for this series, this is not the two-regime function of Definition~\ref{def:population_targets}; it is, however, given by the same formula, with $\Sigma_{0:\nu}$ and $\Sigma_{\nu:1}$ replaced by the block limits computed next.
As our data is piecewise constant, direct averaging yields the following formulas:
\[
\Sigma_{0:\nu}=
\begin{cases}
a, & 0<\nu\le \theta,\\[0.4em]
\dfrac{\theta a+(\nu-\theta)b}{\nu}, & \theta<\nu\le \nu_\star,\\[0.8em]
\dfrac{\theta a+(\nu_\star-\theta)b+(\nu-\nu_\star)c}{\nu}, & \nu_\star<\nu<1,
\end{cases}
\]
and
\[
\Sigma_{\nu:1}=
\begin{cases}
\dfrac{(\theta-\nu)a+(\nu_\star-\theta)b+(1-\nu_\star)c}{1-\nu}, & 0<\nu<\theta,\\[0.8em]
\dfrac{(\nu_\star-\nu)b+(1-\nu_\star)c}{1-\nu}, & \theta\le \nu<\nu_\star,\\[0.8em]
c, & \nu_\star\le \nu<1.
\end{cases}
\]
Using the definition~$\varphi(x):=\log(x+\varepsilon)$, the limiting objective function is therefore explicitly
\begin{equation}\label{eq:Jinfty-explicit-prop18}
J_\infty(\nu)=\nu\,\varphi\!\big(\Sigma_{0:\nu}\big)+(1-\nu)\,\varphi\!\big(\Sigma_{\nu:1}\big),
\qquad \nu\in(0,1),
\end{equation}
where $\Sigma_{0:\nu}$ and $\Sigma_{\nu:1}$ are given by the piecewise expressions above.
In particular, at the two relevant points one has
\[
J_\infty(\theta)=\theta\,\varphi(a)+(1-\theta)\,\varphi\!\left(\frac{(\nu_\star-\theta)b+(1-\nu_\star)c}{1-\theta}\right),
\]
\[
J_\infty(\nu_\star)=\nu_\star\,\varphi\!\left(\frac{\theta a+(\nu_\star-\theta)b}{\nu_\star}\right)
+(1-\nu_\star)\,\varphi(c).
\]
We now show, for an arbitrary~$\varepsilon>0$, that the only candidates for a local minimizer of~$J_\infty$ are $\theta$ and $\nu_\star=0.5$ and that the global minimum is attained at $\nu_\star$. As~$J_\infty$ is piecewise smooth, we consider several cases. \smallskip

Case 1: If $\nu\in(\nu_\star,1)$, then by definition
\[
\Sigma_{0:\nu}
=\frac{\theta a+(\nu_\star-\theta)b+(\nu-\nu_\star)c}{\nu} > c = \Sigma_{\nu:1}.
\]
(The inequality is strict because the displayed mixture assigns positive
weight to $a,b>c$.)
Hence
\begin{equation}\label{eq:J-right}
J_\infty(\nu)=\nu\,\varphi(\Sigma_{0:\nu})+(1-\nu)\,\varphi(c),
\qquad \mbox{for } \nu\in(\nu_\star,1),
\end{equation}
and differentiating gives
\begin{equation}\label{eq:Jprime-right}
J_\infty'(\nu)
=\varphi(\Sigma_{0:\nu})-\varphi(c)+\nu\,\varphi'(\Sigma_{0:\nu})\,\Sigma_{0:\nu}',
\end{equation}
where 
\[
\Sigma_{0:\nu}'= -\frac{\theta a+ (\nu_\star-\theta)b  - \nu_\star c}{\nu^2} = - \frac{\Sigma_{0: \nu}}{ \nu}  + \frac{c}{\nu} .
\]
Using strict concavity of $\varphi$,
\[
\varphi(\Sigma_{0:\nu})-\varphi(c) > \varphi'(\Sigma_{0:\nu})\big(\Sigma_{0:\nu}-c\big),
\]
we obtain for all~$\nu\in(\nu_\star,1)$ that
\begin{align}
J_\infty'(\nu) & > \varphi'(\Sigma_{0:\nu})(\Sigma_{0:\nu}-c)
+\nu\,\varphi'(\Sigma_{0:\nu})\,\Sigma_{0:\nu}'\\
& = \varphi'(\Sigma_{0:\nu})\big[(\Sigma_{0:\nu}-c)+\nu\Sigma_{0:\nu}'\big]=0.
\end{align}
Thus $J_\infty$ is strictly increasing on $(\nu_\star,1)$, so there is no local minimizer
in $(\nu_\star,1)$ and $\nu_\star$ is a (one-sided) local minimizer from the right. \smallskip

Case 2: The argument for $ \nu \in (0,\theta)$ is similar to Case~1. For the convenience of the reader we carry out all details. For~$\nu \in (0, \theta)$,  
\begin{align}
\Sigma_{0:\nu} =  a \quad \mbox{and} \quad \Sigma_{\nu:1}=\frac{(\theta-\nu)a+(\nu_\star-\theta)b+(1-\nu_\star)c}{1-\nu}.    
\end{align}
Hence $J_\infty$ is smooth on $(0,\theta)$ and given by (recalling $\varphi(x)=\log(x+\varepsilon)$)
\[
J_\infty(\nu)=\nu\,\varphi(a)+(1-\nu)\,\varphi(\Sigma_{\nu:1}),
\qquad \nu\in(0,\theta).
\]
Its derivative is given by
\begin{equation}
\label{eq:case2-derivative}
J_\infty'(\nu)=\varphi(a)-\varphi(\Sigma_{\nu:1})+(1-\nu)\,\varphi'(\Sigma_{\nu:1}) \Sigma_{\nu:1}',
\end{equation}
where 
\begin{align}\label{eq:case2-key}
    \Sigma_{\nu:1}' = - \frac{a}{1-\nu} + \frac{\Sigma_{\nu :1}}{1-\nu} .    
\end{align}

Using the strict concavity of $\varphi$ in combination with~$a> \Sigma_{\nu:1}$
(a strict mixture with positive weight on $b,c<a$), we obtain
\begin{equation}
\label{eq:case2-concavity}
\varphi(a)-\varphi(\Sigma_{\nu:1})<\varphi'(\Sigma_{\nu:1})(a-\Sigma_{\nu:1}).
\end{equation}
Inserting \eqref{eq:case2-concavity} into \eqref{eq:case2-derivative} and using
\eqref{eq:case2-key}, we obtain
\begin{align}
J_\infty'(\nu)
< \varphi'(\Sigma_{\nu:1})(a-\Sigma_{\nu:1}) +\varphi'(\Sigma_{\nu:1})(1-\nu) \Sigma_{\nu:1}' =0.
\end{align}
Thus $J_\infty'(\nu)<0$ for all $\nu\in(0,\theta)$, and in particular there
cannot be a local minimizer in $(0,\theta)$. \smallskip

Case 3: For~$\nu \in (\theta, \nu_\star)$, we will show below that
\begin{equation}\label{eq:Jpp-middle}
J_\infty''(\nu)<0,
\end{equation}
so $J_\infty$ is strictly concave on $(\theta,\nu_\star)$ and therefore
has at most one critical point there; if it exists, it is necessarily a strict local maximum. In particular, there is no local minimizer inside $(\theta,\nu_\star)$. Therefore let us move to the verification of~\eqref{eq:Jpp-middle}. For~$\nu \in (\theta, \nu_\star)$
we have
\[
\Sigma_{0:\nu}
= b+\frac{\theta(a-b)}{\nu},
\qquad
\Sigma_{\nu:1}
= c+\frac{(\nu_\star-\nu)(b-c)}{1-\nu}.
\]
Recalling $\varphi(x)=\log(x+\varepsilon)$, we can write
\[
J_\infty(\nu)=\nu\,\varphi(\Sigma_{0:\nu})+(1-\nu)\,\varphi(\Sigma_{\nu:1}),
\qquad \nu\in(\theta,\nu_\star).
\]
A direct differentiation gives
\[
J_\infty'(\nu)
=\varphi(\Sigma_{0:\nu})+\nu \varphi'(\Sigma_{0:\nu})\,\Sigma_{0:\nu}'
-\varphi(\Sigma_{\nu:1})+(1-\nu)\varphi'(\Sigma_{\nu:1})\,\Sigma_{\nu:1}'.
\]
Differentiating once more,
\begin{align*}
J_\infty''(\nu)
&= 2\varphi'(\Sigma_{0:\nu})\,\Sigma_{0:\nu}'
   +\nu\Big(\varphi''(\Sigma_{0:\nu})(\Sigma_{0:\nu}')^2+\varphi'(\Sigma_{0:\nu})\,\Sigma_{0:\nu}''\Big) \\
&\quad -2\varphi'(\Sigma_{\nu:1})\,\Sigma_{\nu:1}'
   +(1-\nu)\Big(\varphi''(\Sigma_{\nu:1})(\Sigma_{\nu:1}')^2+\varphi'(\Sigma_{\nu:1})\,\Sigma_{\nu:1}''\Big).
\end{align*}
Direct computation shows
\begin{align}
& \Sigma_{0:\nu}'= -\frac{\theta(a-b)}{\nu^2},\quad     
\Sigma_{0:\nu}''= \frac{2\theta(a-b)}{\nu^3}, \mbox{ and} \\
& \Sigma_{\nu:1}'= -\frac{(1-\nu_\star)(b-c)}{(1-\nu)^2},\quad
\Sigma_{\nu:1}''= -\frac{2(1-\nu_\star)(b-c)}{(1-\nu)^3}.
\end{align}
Hence we obtain the cancellations
\[
2\Sigma_{0:\nu}'+\nu \Sigma_{0:\nu}'' =0, \quad \mbox{and} \quad -2\Sigma_{\nu:1}'+(1-\nu)\Sigma_{\nu:1}''
= 0.
\]
This allows us to simplify~$J_\infty''$ as the $\varphi'$-terms drop out and we are left with
\[
J_\infty''(\nu)
= \nu\,\varphi''(\Sigma_{0:\nu})(\Sigma_{0:\nu}')^2
  + (1-\nu)\,\varphi''(\Sigma_{\nu:1})(\Sigma_{\nu:1}')^2.
\]
Finally, since $\varphi(x)=\log(x+\varepsilon)$ satisfies
\[
\varphi''(x)=-(x+\varepsilon)^{-2}<0,
\]
and since $a-b=r>0$, the identity
$\Sigma_{0:\nu}'=-\theta(a-b)/\nu^2$ shows that
$\Sigma_{0:\nu}'\neq0$. Together with
$\nu\in(\theta,\nu_\star)\subset(0,1)$, this makes the first summand in the
preceding display strictly negative, and hence we get
\[
J_\infty''(\nu)<0\qquad\text{for all }\nu\in(\theta,\nu_\star),
\]
which is \eqref{eq:Jpp-middle}. \smallskip

Overall, the only candidates for a local minimizer of~$J_{\infty}$ on~$[0,1]$ are the kink points $\theta$ and $\nu_\star$: by Case~2 the objective is strictly decreasing on~$(0,\theta)$, by Case~3 strictly concave on~$(\theta,\nu_\star)$, and by Case~1 strictly increasing on~$(\nu_\star,1)$. Consequently the global minimum over~$[0,1]$ is attained at $\theta$ or at $\nu_\star$. To identify which, it remains to show that
\begin{align}\label{equ:optimal_energy_gap}
    J_\infty(\nu_\star)<J_\infty(\theta).    \smallskip
\end{align}

Verification of~\eqref{equ:optimal_energy_gap}: The argument shares similarities to the calculations in the proof of Theorem~\ref{thm:classical_QML_consistency} and Theorem~\ref{thm:QML-robust}. At the two points $\nu=\theta$ and $\nu=\nu_\star$ we have
\begin{align}
    & \Sigma_{0:\theta}=a,\quad 
\Sigma_{\theta:1}=\frac{(\nu_\star-\theta)b+(1-\nu_\star)c}{1-\theta}, \quad \mbox{and} \\
    & \Sigma_{0:\nu_\star}=\frac{\theta a+(\nu_\star-\theta)b}{\nu_\star},
\quad
\Sigma_{\nu_\star:1}=c.    
\end{align}
To simplify the computation, we introduce the weights
\[
\lambda:=\frac{\theta}{\nu_\star}=2\theta\in(0,1),
\qquad
\mu:=\frac{1-\nu_\star}{1-\theta}=\frac{1}{2(1-\theta)}\in(0,1).
\]
Then, direct calculation gives
\[
\Sigma_{0:\nu_\star}=(1-\lambda)b+\lambda a, 
\quad \mbox{and} \quad 
\Sigma_{\theta:1}=(1-\mu)b+\mu c
\]
Hence
\begin{align}
J_\infty(\nu_\star)
&=\nu_\star\,\varphi\!\big((1-\lambda)b+\lambda a\big)+(1-\nu_\star)\,\varphi(c),\\
J_\infty(\theta)
&=\theta\,\varphi(a)+(1-\theta)\,\varphi\!\big((1-\mu)b+\mu c\big).
\end{align}
Using $\theta=\lambda\nu_\star$ and $1-\nu_\star=\mu(1-\theta)$, we can regroup the difference as 
\begin{align}
J_\infty(\nu_\star)-J_\infty(\theta) &=\nu_\star\Big(\varphi\!\big((1-\lambda)b+\lambda a\big)-\lambda\varphi(a)\Big)\\
&\qquad -(1-\theta)\Big(\varphi\!\big((1-\mu)b+\mu c\big)-\mu\varphi(c)\Big) \\
&=\nu_\star\Big(\varphi\!\big((1-\lambda)b+\lambda a\big)-(1-\lambda)\varphi(b)-\lambda\varphi(a)\Big)\label{eq:diff-gap}\\
&\qquad -(1-\theta)\Big(\varphi\!\big((1-\mu)b+\mu c\big)-(1-\mu)\varphi(b)-\mu\varphi(c)\Big).
\end{align}
where we added and subtracted the~$\varphi(b)$ term in the last step, observing that by definition $\nu_\star (1- \lambda) = (1- \theta) (1- \mu)$. Identifying positive scalars with $1\times1$ positive-definite matrices and recalling the Jensen gap~$G$ from Lemma~\ref{lem:jensen_gap_logdet}, equation~\eqref{eq:diff-gap} becomes
\[
J_\infty(\nu_\star)-J_\infty(\theta)
=\nu_\star\,G(b+\varepsilon,a+\varepsilon;\lambda)
-(1-\theta)\,G(b+\varepsilon,c+\varepsilon;\mu).
\]

For the first Jensen gap, the interpolation lies between~$b+\varepsilon$ and~$a+\varepsilon$. Since~$a>b$, we may take $\mu_{\min}=b+\varepsilon$. The upper bound in Lemma~\ref{lem:jensen_gap_logdet} therefore gives
\begin{equation}\label{eq:gap-upper-simple}
G(b+\varepsilon,a+\varepsilon;\lambda)
\leq
\frac{\lambda(1-\lambda)}{2(b+\varepsilon)^2}(a-b)^2
=
\frac{\lambda(1-\lambda)}{2(b+\varepsilon)^2}r^2.
\end{equation}

For the second Jensen gap, the interpolation lies between~$c+\varepsilon$ and~$b+\varepsilon$. Since~$b>c$, we may take $\mu_{\max}=b+\varepsilon$. The lower bound in Lemma~\ref{lem:jensen_gap_logdet} gives
\begin{equation}\label{eq:gap-lower-simple}
G(b+\varepsilon,c+\varepsilon;\mu)
\geq
\frac{\mu(1-\mu)}{2(b+\varepsilon)^2}(b-c)^2
=
\frac{\mu(1-\mu)}{2(b+\varepsilon)^2}K^2.
\end{equation}

Combining \eqref{eq:diff-gap}, \eqref{eq:gap-upper-simple}, and \eqref{eq:gap-lower-simple} yields
\begin{align*}
J_\infty(\nu_\star)-J_\infty(\theta)
&\le \frac{1}{2(b+\varepsilon)^2}\Big(\nu_\star\lambda(1-\lambda)r^2-(1-\theta)\mu(1-\mu)K^2\Big).
\end{align*}

Now we plug in $\nu_\star=\tfrac12$, $\lambda=2\theta$, and $\mu=\frac{1}{2(1-\theta)}$ and obtain
\[
\nu_\star\lambda(1-\lambda)=\frac12 (2\theta)(1-2\theta)=\theta(1-2\theta),
\qquad
(1-\theta)\mu(1-\mu)
=\frac{1-2\theta}{4(1-\theta)}.
\]
Hence
\begin{align}
J_\infty(\nu_\star)-J_\infty(\theta)
&\le \frac{1-2\theta}{2(b+\varepsilon)^2}
\left(\theta r^2-\frac{K^2}{4(1-\theta)}\right).\label{eq:final-diff-bound}
\end{align}
Since $\theta\in(0,\tfrac12)$ we have $1-2\theta>0$. Therefore \eqref{eq:final-diff-bound} is strictly negative as soon as
\[
\theta r^2<\frac{K^2}{4(1-\theta)}
\qquad\Longleftrightarrow\qquad
\frac{r}{K}<\frac{1}{2\sqrt{\theta(1-\theta)}}.
\]
This holds due to~\eqref{equ:counterexample_conditions}, and we conclude
\[
J_\infty(\nu_\star)-J_\infty(\theta)<0,
\qquad\text{i.e.}\qquad
J_\infty(\nu_\star)<J_\infty(\theta),
\]
which proves \eqref{equ:counterexample_energy}.
Together with the monotonicity and concavity properties established above --- $J_\infty$ is strictly decreasing on~$(0,\theta)$, strictly increasing on~$(\nu_\star,1)$, and strictly concave on~$(\theta,\nu_\star)$, so that on the middle interval it exceeds its smaller endpoint value~$J_\infty(\nu_\star)$ --- this shows that~$J_\infty$ has its unique minimizer at~$\nu_\star=0.5$. \qed

\subsubsection{Proof of Proposition~\ref{prop:projected_slln_factor}}\label{sec:proof_qml_consistency_factor}

The proof has three steps: we first show that the error and cross terms are
negligible on the pervasive scale; we then use the order-$p$ eigengap and the
Davis--Kahan theorem to control the empirical projection; finally, we
transfer the regime separation through that projection to identify
$\Delta_{\mathrm{within}}$ and $\Delta_{\mathrm{between}}$.

Let us recall the notation of Section~\ref{sec:abstract_QML} and some basic facts. First, we want to mention that~$p=p_n$ is a sequence in~$n$, and we suppressed the dependency on~$n$ for convenience. Since
$y_t=\tfrac1{\sqrt p}\widehat U^{\top}x_t$, we have for the macroscopic interval
$I=I(a,b)$ by definition
\[
\widehat\Sigma_{a:b}
=\tfrac1p\,\widehat U^{\top}
\Bigl(\tfrac1{|I|}\sum_{t\in I}x_tx_t^{\top}\Bigr)
\widehat U.
\]
We recall the decomposition $x_t=s_t+e_t$ into signal~$s_t$ and noise~$e_t$. For a macroscopic interval $I=I(a,b)$ we have defined
\[
S_{a:b}=\tfrac1{|I|}\sum_{t\in I}s_ts_t^{\top},\qquad
E_{a:b}=\tfrac1{|I|}\sum_{t\in I}e_te_t^{\top},\qquad
C_{a:b}=\tfrac1{|I|}\sum_{t\in I}\bigl(s_te_t^{\top}+e_ts_t^{\top}\bigr).
\]
This yields the decomposition
\begin{align}
\widehat\Sigma_{a:b}
=\tfrac1p\widehat U^{\top}(S_{a:b}+C_{a:b}+E_{a:b})\widehat U.
\end{align}
We use throughout the contraction $\|\widehat U^{\top} A\widehat U\|\le\|A\|$, since~$\widehat U$ has orthonormal columns. We also use the operator Cauchy--Schwarz inequality
\[
\bigl\|\tfrac1{|I|}\sum_{t\in I}s_te_t^{\top}\bigr\|
\le
\|S_{a:b}\|^{1/2}\|E_{a:b}\|^{1/2},
\]
which gives the estimate
\begin{align}\label{equ:op_cauchy_schwarz}
\|C_{a:b}\|\le 2\|S_{a:b}\|^{1/2}\|E_{a:b}\|^{1/2}.
\end{align}

\smallskip

We continue with the review of notation and fix a macroscopic interval $I=I(a,b)$ in the regime $\bullet$. For~$t \in I(a,b)$, we have $s_t=\Lambda_\bullet f_t$. Therefore,
\[
S_{a:b}=\Lambda_\bullet G_{a:b}\Lambda_\bullet^{\top},
\qquad
G_{a:b}:=\tfrac1{|I|}\sum_{t\in I}f_tf_t^{\top}.
\]
We also have
\[
\Sigma_\bullet^{\rm sig}:=\Lambda_\bullet\Sigma_{F_\bullet}\Lambda_\bullet^{\top}
\]
by definition.

\smallskip

\noindent\emph{Operator-norm estimates.}
We start by providing auxiliary estimates on operator norms. By pervasiveness~(F1),
\begin{align}
\label{eq:estimate_lambda_bullet_pervasiveness}
\tfrac1p\|\Lambda_\bullet\|^2=O(1).
\end{align}
We also have, by operator-norm submultiplicativity and~\eqref{eq:estimate_lambda_bullet_pervasiveness},
\begin{align}\label{equ:signal_op_bounds_sigma_bullet}
\tfrac1p\|\Sigma_\bullet^{\rm sig}\|
\le
\tfrac1p\|\Lambda_\bullet\|^2\,\|\Sigma_{F_\bullet}\|
=O(1).
\end{align}

We now deduce that
\begin{align}\label{equ:signal_op_bounds_S_a_b}
\tfrac1p\|S_{a:b}\|=O(1).
\end{align}
Indeed, for intervals contained in the regime $\bullet$,
\begin{align}\label{equ:signal_block_estimate}
\tfrac1p\bigl\|S_{a:b}-\Sigma_\bullet^{\rm sig}\bigr\|
& =
\tfrac1p\bigl\|\Lambda_\bullet(G_{a:b}-\Sigma_{F_\bullet})\Lambda_\bullet^{\top}\bigr\| \\
& \le
\Bigl(\tfrac1p\|\Lambda_\bullet\|^2\Bigr)
\bigl\|G_{a:b}-\Sigma_{F_\bullet}\bigr\|
\xrightarrow[n\to\infty]{}0,
\end{align}
by a combination of~\eqref{eq:estimate_lambda_bullet_pervasiveness} and Assumption~(F2). If an interval crosses the breakpoint, the same $O(1)$ bound~\eqref{equ:signal_op_bounds_S_a_b} follows by decomposing it into its left- and right-regime parts.

Allowing from now on arbitrary fixed macroscopic intervals~$I(a,b)$, we observe that Assumption~(F4) controls the noise, i.e.
\begin{align}
\label{eq:HDHSS_error_control}
\tfrac1p\|E_{a:b}\|\to0.
\end{align}
 Additionally, the cross terms vanish by~\eqref{equ:op_cauchy_schwarz} and~\eqref{eq:HDHSS_error_control}:
\begin{align}\label{equ:cross_estimate}
\tfrac1p\|C_{a:b}\|
\le
2\Bigl(\tfrac1p\|S_{a:b}\|\Bigr)^{1/2}
\Bigl(\tfrac1p\|E_{a:b}\|\Bigr)^{1/2}
\xrightarrow[n\to\infty]{}0.
\end{align}

\smallskip

\noindent\emph{Eigengap and consistency of the projection.}
Writing
\[
\Lambda:=[\,\Lambda_\le\ \Lambda_>\,]
\]
and
\[
\bar D:=\operatorname{diag}\bigl(\theta\Sigma_{F_\le},(1-\theta)\Sigma_{F_>}\bigr)\succ0,
\]
the global signal moment is
\begin{align}
\Sigma^{\rm sig}
:=
\theta\Sigma_\le^{\rm sig}+(1-\theta)\Sigma_>^{\rm sig}
=
\Lambda\bar D\Lambda^{\top}.
\end{align}
The non-zero eigenvalues of~$\frac{1}{p}\Sigma^{\rm sig}$ coincide with those of
\[
\bar D^{1/2}\bigl(\tfrac1p\Lambda^{\top}\Lambda\bigr)\bar D^{1/2}.
\]
By Assumption~(F1),
\[
\bar D^{1/2}\bigl(\tfrac1p\Lambda^{\top}\Lambda\bigr)\bar D^{1/2}
\longrightarrow
\bar D^{1/2}\Sigma_\Lambda\bar D^{1/2}.
\]
Since $\bar D\succ0$ and $\operatorname{rank}(\Sigma_\Lambda)=q$, the limit
$\bar D^{1/2}\Sigma_\Lambda\bar D^{1/2}$ has rank~$q$. Hence the leading \(q\) eigenvalues of \(\frac{1}{p}\Sigma^{\rm sig}\) are bounded away from \(0\) in the limit whereas the rest are \(0\). Therefore, for $n$ large, there is an eigengap of order~$p$ at level~$q$, i.e.
\begin{align}\label{equ:eigengap_signal}
\lambda_q(\Sigma^{\rm sig})-\lambda_{q+1}(\Sigma^{\rm sig})
\ge
\gamma_0\,p
\qquad\text{for some }\gamma_0>0 .
\end{align}

 Let $P_{\rm sig}$ denote the orthogonal projection onto the leading $q$-dimensional eigenspace of $\Sigma^{\rm sig}$, and let $\widehat P:=\widehat U\widehat U^{\top}$ denote the orthogonal projection onto $\Span(\widehat U)$, where $\widehat U$ contains the leading $q$ eigenvectors of the full-data second moment
\[
\tfrac1n\sum_{t=1}^n x_tx_t^{\top}=S_{0:1}+C_{0:1}+E_{0:1}.
\]
Set $w_n:=\tau/n$.  Lemma~\ref{lem:block_additivity}, applied to the signal vectors $s_t\in\mathbb R^p$, and the definition of~$\Sigma^{\rm sig}$ give
the exact weight-mismatch decomposition
\begin{align*}
S_{0:1}-\Sigma^{\rm sig}
&=w_n\bigl(S_{0:\theta}-\Sigma_\le^{\rm sig}\bigr)
+(1-w_n)\bigl(S_{\theta:1}-\Sigma_>^{\rm sig}\bigr)\\
&\quad +(w_n-\theta)
\bigl(\Sigma_\le^{\rm sig}-\Sigma_>^{\rm sig}\bigr),\\
\tfrac1p\bigl\|S_{0:1}-\Sigma^{\rm sig}\bigr\|
&\leq w_n\tfrac1p\bigl\|S_{0:\theta}-\Sigma_\le^{\rm sig}\bigr\|
+(1-w_n)\tfrac1p\bigl\|S_{\theta:1}-\Sigma_>^{\rm sig}\bigr\|\\
&\quad +|w_n-\theta|\tfrac1p
\bigl\|\Sigma_\le^{\rm sig}-\Sigma_>^{\rm sig}\bigr\|
\xrightarrow[n\to\infty]{}0.
\end{align*}
Here we used~\eqref{equ:signal_block_estimate}, $w_n\to\theta$, and the
$O(1)$ signal bounds~\eqref{equ:signal_op_bounds_sigma_bullet}. Together with
\eqref{eq:HDHSS_error_control} and~\eqref{equ:cross_estimate}, this gives
\begin{align}
& \tfrac1p\bigl\|(S_{0:1}+C_{0:1}+E_{0:1})-\Sigma^{\rm sig}\bigr\|\\
& \qquad \le
\tfrac1p\bigl\|S_{0:1}-\Sigma^{\rm sig}\bigr\|
+\tfrac1p\|C_{0:1}\|
+\tfrac1p\|E_{0:1}\|
\xrightarrow[n\to\infty]{}0 .
\end{align}

Let $\widehat P$ and $P$ denote the orthogonal projections onto the leading eigenspaces, of dimension $q$, of symmetric matrices $\widehat\Sigma$ and $\Sigma$, and let $\Theta$ denote the diagonal matrix of the principal angles between their ranges. If $\gamma:=\lambda_q(\Sigma)-\lambda_{q+1}(\Sigma)>0$, then the Davis--Kahan theorem in the form of Yu, Wang, and Samworth~\cite[Theorem~2]{YuWangSamworth2015} (a variant of the classical Davis--Kahan theorem~\cite{davis1970rotation}) gives
\[
\|\widehat P-P\|
=\|\sin\Theta\|
\leq\|\sin\Theta\|_F
\leq\frac{2\sqrt q}{\gamma}\,\|\widehat\Sigma-\Sigma\|.
\]
With the gap~\eqref{equ:eigengap_signal} this yields
\begin{align}\label{equ:DK_projection}
\bigl\|\widehat P-P_{\rm sig}\bigr\|
\le
\frac{2\sqrt{q}}{\gamma_0}
\cdot
\tfrac1p
\bigl\|(S_{0:1}+C_{0:1}+E_{0:1})-\Sigma^{\rm sig}\bigr\|
\xrightarrow[n\to\infty]{}0.
\end{align}

We shall also use the following consequence of pervasiveness. For every fixed matrix
$A\in\mathbb R^{(q_\le+q_>)\times(q_\le+q_>)}$,
\begin{align}\label{equ:pervasive_projection_estimate}
\tfrac1p
\bigl\|
\Lambda A\Lambda^{\top}
-
P_{\rm sig}\Lambda A\Lambda^{\top}P_{\rm sig}
\bigr\|_F
\xrightarrow[n\to\infty]{}0 .
\end{align}
Indeed, since $P_{\rm sig}$ is the projection onto the leading left singular space of
$\Lambda\bar D^{1/2}$, we have
\begin{align}
\bigl\|(I-P_{\rm sig})\Lambda\bar D^{1/2}\bigr\|_F^2
=
\sum_{j>q}\lambda_j(\Sigma^{\rm sig})
=
o(p),
\end{align}
where we used that only finitely many eigenvalues are involved and that
$\lambda_{q+1}(\Sigma^{\rm sig})=o(p)$. Since $\bar D^{-1/2}$ is bounded, this implies
\begin{align}\label{equ:Lambda_tail_small}
\bigl\|(I-P_{\rm sig})\Lambda\bigr\|_F
\le
\bigl\|(I-P_{\rm sig})\Lambda\bar D^{1/2}\bigr\|_F
\bigl\|\bar D^{-1/2}\bigr\|
=
o(\sqrt p).
\end{align}
Moreover, by~(F1),
\[
\|\Lambda\|=O(\sqrt p).
\]
Therefore,
\begin{align}
\bigl\|(I-P_{\rm sig})\Lambda A\Lambda^{\top}\bigr\|_F
&\le
\bigl\|(I-P_{\rm sig})\Lambda\bigr\|_F
\|A\|
\|\Lambda\|
=o(p),\\
\bigl\|\Lambda A\Lambda^{\top}(I-P_{\rm sig})\bigr\|_F
&\le
\|\Lambda\|
\|A\|
\bigl\|\Lambda^{\top}(I-P_{\rm sig})\bigr\|_F
=o(p).
\end{align}
Since
\[
\Lambda A\Lambda^{\top}
-
P_{\rm sig}\Lambda A\Lambda^{\top}P_{\rm sig}
=
(I-P_{\rm sig})\Lambda A\Lambda^{\top}
+
P_{\rm sig}\Lambda A\Lambda^{\top}(I-P_{\rm sig}),
\]
we obtain~\eqref{equ:pervasive_projection_estimate}.

\medskip

\noindent\emph{Argument for (i).}
By the contraction and the estimates~\eqref{equ:signal_op_bounds_S_a_b},~\eqref{eq:HDHSS_error_control}, and~\eqref{equ:cross_estimate}, every macroscopic block satisfies
\begin{align}
\|\widehat\Sigma_{a:b}\|
&=
\tfrac1p
\bigl\|\widehat U^{\top}(S_{a:b}+C_{a:b}+E_{a:b})\widehat U\bigr\|\\
& \le
\tfrac1p\|S_{a:b}\|
+\tfrac1p\|C_{a:b}\|
+\tfrac1p\|E_{a:b}\|
=O(1).
\end{align}
Set
\[
C_0:=\max_{\bullet\in\{\leq,>\}}
\limsup_{n\to\infty}\tfrac1p\|\Sigma_\bullet^{\rm sig}\|<\infty,
\]
where finiteness follows from~\eqref{equ:signal_op_bounds_sigma_bullet}.
Every break-anchored block lies in one of the two regimes. For each fixed
$I(a,b)\in\mathcal I_\theta^{\rm br}$ in regime~$\bullet$,
\eqref{equ:signal_block_estimate},~\eqref{eq:HDHSS_error_control}, and
\eqref{equ:cross_estimate} give
\[
\limsup_{n\to\infty}\|\widehat\Sigma_{a:b}\|
\leq
\limsup_{n\to\infty}\tfrac1p\|\Sigma_\bullet^{\rm sig}\|
\leq C_0.
\]
Since $C_0$ is independent of the particular block,
\[
\sup_{I(a,b)\in\mathcal I_\theta^{\rm br}}
\limsup_{n\to\infty}\|\widehat\Sigma_{a:b}\|
\leq C_0,
\]
which is precisely $M<\infty$, and hence Assumption~(A1).

\medskip

\noindent\emph{Argument for (ii).}
We fix two macroscopic intervals $I_1,I_2$. Using the identity
\[
\widehat\Sigma_{I}
=
\tfrac1p\widehat U^{\top}(S_I+C_I+E_I)\widehat U
\]
for $I=I_1$ and $I=I_2$, together with~\eqref{eq:HDHSS_error_control} and~\eqref{equ:cross_estimate}, we get
\begin{align}\label{equ:diff_decomp}
\| \widehat\Sigma_{I_1}-\widehat\Sigma_{I_2} \|
& \le
\tfrac1p \| S_{I_1}-S_{I_2} \| \\
& \qquad
+
\underbrace{
\tfrac1p\bigl(\|C_{I_1}\|+\|C_{I_2}\|+\|E_{I_1}\|+\|E_{I_2}\|\bigr)
}_{\xrightarrow[n\to\infty]{}0}.
\end{align}
Since the projected matrices are $q\times q$ and $q$ is fixed, the same estimates imply the corresponding convergence in Frobenius norm.

If $I_1$ and $I_2$ lie in the same regime $\bullet$, then by~\eqref{equ:signal_block_estimate} both
\[
\tfrac1p\|S_{I_j}-\Sigma_\bullet^{\rm sig}\|\to0,
\qquad j=1,2.
\]
Hence
\[
\tfrac1p\|S_{I_1}-S_{I_2}\|\to0,
\]
and therefore
\[
\|\widehat\Sigma_{I_1}-\widehat\Sigma_{I_2}\|_F\to0
\]
for each such pair. As $\Delta_{\textup{within}}$ takes the limit in $n$ before the supremum over pairs,
\begin{align}
\Delta_{\textup{within}}
=
\sup_{\text{same regime}}\
\limsup_{n\to\infty}
\|\widehat\Sigma_{I_1}-\widehat\Sigma_{I_2}\|_F
=
0 .
\end{align}

Now assume that $I_1\subset[0,\theta]$ and $I_2\subset[\theta,1]$, and set
\[
\Xi:=\Sigma_\le^{\rm sig}-\Sigma_>^{\rm sig}.
\]
By~\eqref{equ:signal_block_estimate},
\[
\tfrac1p\|(S_{I_1}-S_{I_2})-\Xi\|\to0.
\]
Therefore, in Frobenius norm,
\begin{align}\label{equ:between_projected_Xi}
\widehat\Sigma_{I_1}-\widehat\Sigma_{I_2}
=
\tfrac1p\widehat U^{\top}\Xi\widehat U+o(1).
\end{align}
Since $\widehat U$ has orthonormal columns,
\begin{align}
  \label{equ:projection_coordinates_frobenius_norm_equivalence}
  \|\widehat U^{\top}\Xi\widehat U\|_F
  =
  \|\widehat P\Xi\widehat P\|_F.
\end{align}
We now compare $\widehat P\Xi\widehat P$ with $\Xi$. First observe that
\[
\Xi=\Lambda A_\Xi\Lambda^{\top},
\qquad
A_\Xi:=
\operatorname{diag}\bigl(\Sigma_{F_\le},-\Sigma_{F_>}\bigr).
\]
Thus, applying~\eqref{equ:pervasive_projection_estimate} with $A=A_\Xi$ gives
\begin{align}\label{equ:Xi_projection_asymptotic}
\tfrac1p\|\Xi-P_{\rm sig}\Xi P_{\rm sig}\|_F\to0.
\end{align}
Moreover,
\[
\tfrac1p\|\Xi\|_F=O(1),
\]
because $\Xi$ has rank at most $q_\le+q_>$ and
$\tfrac1p\|\Xi\|=O(1)$ by~\eqref{equ:signal_op_bounds_sigma_bullet}. Hence, using~\eqref{equ:DK_projection},
\begin{align}
\tfrac1p
\bigl\|\widehat P\Xi\widehat P-P_{\rm sig}\Xi P_{\rm sig}\bigr\|_F
&\le
\tfrac1p
\bigl\|(\widehat P-P_{\rm sig})\Xi\widehat P\bigr\|_F
+
\tfrac1p
\bigl\|P_{\rm sig}\Xi(\widehat P-P_{\rm sig})\bigr\|_F\\
&\le
2\Bigl(\tfrac1p\|\Xi\|_F\Bigr)
\|\widehat P-P_{\rm sig}\|
\xrightarrow[n\to\infty]{}0.
\end{align}
Combining this estimate with~\eqref{equ:Xi_projection_asymptotic}, we obtain
\begin{align}
\tfrac1p
\bigl|\,
\|\widehat P\Xi\widehat P\|_F-\|\Xi\|_F
\,\bigr|
&\le
\tfrac1p
\|\widehat P\Xi\widehat P-\Xi\|_F
\xrightarrow[n\to\infty]{}0.
\end{align}
A combination of the last statement,~\eqref{equ:projection_coordinates_frobenius_norm_equivalence}, and~\eqref{equ:between_projected_Xi} yields that for every between-regime pair
\begin{align}
\|\widehat\Sigma_{I_1}-\widehat\Sigma_{I_2}\|_F
=
\tfrac1p
\bigl\|
\Sigma_\le^{\rm sig}-\Sigma_>^{\rm sig}
\bigr\|_F
+o(1).
\end{align}
The leading term is independent of the chosen between-regime pair. Taking the infimum over between-regime pairs and then $\liminf_n$ gives
\begin{align}
\Delta_{\textup{between}}
=
\liminf_{n\to\infty}
\tfrac1p
\bigl\|
\Sigma_\le^{\rm sig}-\Sigma_>^{\rm sig}
\bigr\|_F,
\end{align}
which is positive by~(F3). This proves Proposition~\ref{prop:projected_slln_factor}. \qed

\section*{Acknowledgments}
We thank Alec Kercheval and Mihai Cucuringu for valuable discussions on the topics of this work. We acknowledge the use of the AI assistants Claude (Anthropic, via the Claude Code command-line interface) and ChatGPT (OpenAI) for assistance with editing the manuscript and with developing the code for the numerical examples; the authors reviewed all such output and take full responsibility for the content of this paper. We thank the UCLA Department of Mathematics for making these tools available, and the Olga Radko Endowed Math Circle at UCLA for financial support.

\appendix

\section{Code}\label{sec:code}

The code that generates the figures in this paper is included with the manuscript source as ancillary files (see the \texttt{anc/} directory of the arXiv submission). The equity data for Figure~\ref{fig:gfc_smoothness} are daily adjusted close prices from Yahoo Finance, retrieved with the \texttt{yfinance} package; the ticker universe is defined inside the generating script, and names with insufficient price coverage over 2005--2010 are dropped by the filter documented there, leaving $p=200$ names. Table~\ref{tab:code_figure_map} lists, for each figure, the source that generates it.

\begin{table}[h]
\centering
\begin{tabular}{l l}
\hline
Figure & Source \\
\hline
Figure~\ref{fig:intro_qml_demo} & \texttt{anc/intro\_qml\_demo\_fig.py} \\
Figure~\ref{fig:intro_inhomogeneous_lln} & \texttt{anc/intro\_inhomogeneous\_lln\_fig.py} \\
Figure~\ref{fig:boundary_ridge} & \texttt{anc/boundary\_ridge\_fig.py} \\
Figure~\ref{fig:gfc_smoothness} & \texttt{anc/gfc\_smoothness\_fig.py} \\
Figure~\ref{fig:illustration_optimality} & drawn in pgfplots directly in the \TeX{} source \\
\hline
\end{tabular}
\caption{Correspondence between the figures of this paper and the ancillary scripts that generate them.}
\label{tab:code_figure_map}
\end{table}

\bibliographystyle{abbrv}

\bibliography{references.bib}

\end{document}